\documentclass[11pt, reqno]{amsart}
\usepackage{fullpage}

\newif\ifArxiv
\Arxivtrue
 \newif\ifHideFoot
\HideFoottrue
\ifHideFoot
\else
\usepackage[notref,notcite]{showkeys}
\fi 

\usepackage{amssymb,amsmath,amsthm,amscd,mathrsfs,graphicx, color}
\usepackage[cmtip,all,matrix,arrow,tips,curve]{xy}
 \usepackage{hyperref}
\usepackage[usenames,dvipsnames]{xcolor}
\usepackage{xypic}
\hypersetup{colorlinks=true,citecolor=ForestGreen,linkcolor=Maroon,urlcolor=NavyBlue}
 \usepackage{mathtools}
\usepackage{mathpazo}
\usepackage[normalem]{ulem}
\usepackage{thmtools}
\usepackage{cleveref}
\usepackage{enumitem}
\usepackage{tikz-cd}
  
\numberwithin{equation}{section}
\newtheorem{teo}{Theorem}[section]
\newtheorem{pro}[teo]{Proposition}
\newtheorem{lem}[teo]{Lemma}
\newtheorem{cor}[teo]{Corollary}

\newtheorem{teoalpha}{Theorem}

\theoremstyle{definition}

\newtheorem{dfn}[teo]{Definition}
\theoremstyle{remark}
\newtheorem{rem}[teo]{Remark}

\ifHideFoot

\newcommand{\Yano}[1]{}
\newcommand{\Shend}[1]{}

\else 

\newcommand{\marg}[1]{\normalsize{{
\color{red}\footnote{{\color{blue}#1}}}{\marginpar[\vskip
-.25cm{\color{red}\hfill\tiny\thefootnote$\implies$}]{\vskip
-.2cm{\color{red}$\impliedby$\tiny\thefootnote}}}}}
\newcommand{\Yano}[1]{\marg{(Yano) #1}}
\newcommand{\Shend}[1]{\marg{(Shend) #1}}

\fi

\newcommand{\m}[1]{\mathcal{#1}}

\newcommand{\X}{\mathcal{X}}

\newcommand{\ra}{\rightarrow}

\title[Positivity for Hodge modules on stacks]{Movable curve classes and slope stability on Deligne--Mumford stacks}

\author{Sebastian Casalaina-Martin}
\address{University of Colorado, Department of Mathematics, 
Boulder, CO 80309, USA }
\email{casa@math.colorado.edu}

\author{Shend Zhjeqi}
\address{University of Michigan, Department of Mathematics, 
Ann Arbor, MI 48109, USA }
\email{shendzh@umich.edu}

\thanks{Research of the first named author is supported in part by a grant from the Simons Foundation (SFI-MPS-TSM-00013682). The second named author was partially supported by the Simons Collaboration grant Moduli of Varieties.}

\date{\today}

\begin{document}

\begin{abstract}
We generalize some results in the literature on movable curve classes and slope stability of coherent sheaves on smooth projective varieties to the case of smooth proper DM stacks admitting projective coarse moduli spaces.  As an application, we establish a Bogomolov--Gieseker inequality on such stacks.  This paper is the second in a series aiming to generalize results of Popa--Schnell and Wei--Wu on Viehweg hyperbolicity to the setting of DM stacks, and in particular, to certain KSBA moduli spaces.
\end{abstract}

\maketitle

\section*{Introduction}

Slope stability of coherent sheaves with respect to an ample line bundle on a smooth complex projective variety  plays a central role in the study of coherent sheaves. Among many applications are constructions of moduli spaces of sheaves, as well as various constraints on invariants of stable sheaves, such as the Bogomolov--Gieseker inequality \cite{G79bogomolov}.  In order to apply these techniques in the setting of birational geometry, where the pull-back of an ample line bundle need not be ample, it has become clear recently that one should consider, instead, slope stability with respect to movable classes of curves, i.e., classes that pair non-negatively with all pseudo-effective divisors \cite{CPT11, GKP14, GKP16, GKPT19,CPFol19}. 
Recall that for a nonzero torsion-free coherent sheaf $\mathcal E$ on a smooth complex projective variety $X$ and a movable curve class $\alpha\in \operatorname{N}_1(X)_{\mathbb R}$, one defines the $\alpha$-slope of $\mathcal E$ as
\begin{equation}\label{E:introSlp}
\mu_\alpha(\mathcal E):=\frac{c_1(\mathcal E)\cdot \alpha}{\operatorname{rk}(\mathcal E)}.
\end{equation}
The sheaf $\mathcal E$ is said to be $\alpha$-semi-stable (resp.~$\alpha$-stable) if for all non-zero subsheaves (resp.~non-zero subsheaves of smaller rank than $\mathcal E$) $\mathcal F\subseteq \mathcal E$, one has $\mu_\alpha(\mathcal F)\le \mu_\alpha(\mathcal E)$ (resp.~$\mu_\alpha(\mathcal F)< \mu_\alpha(\mathcal E)$).  

In order to  apply these techniques elsewhere, to study the birational geometry of moduli spaces, in this paper we extend the notion of movable classes of  curves, and slope stability with respect to such classes, to the case of smooth proper Deligne--Mumford stacks with projective coarse moduli spaces.  We note that the case where $\mathcal X=[V/G]$ is a finite quotient stack, i.e., $\mathcal X$ is the quotient of a smooth projective variety $V$ by a finite group $G$ acting on $V$, has been considered in \cite{CPFol19}, and the results in this paper should be considered a generalization of those results to smooth proper DM stacks over $\mathbb C$ with projective coarse moduli space.  

We define movable curve classes on such DM stacks in  \Cref{S:mov-class-XX} (\Cref{D:MoveClass}), and define the slope of a coherent sheaf by \eqref{E:introSlp},  using the intersection theory from Vistoli \cite{vistoli89}.
We then define $\alpha$-(semi-)stability exactly as before (\Cref{S:SlopeStab}).  
Our main result is the following, generalizing the results in the literature for smooth projective varieties, and finite quotient stacks:

\begin{teoalpha}\label{T:main}
Let $\mathcal X$ be a smooth proper integral Deligne--Mumford stack over $\mathbb C$ with projective coarse moduli space.  For a movable class of curves $\alpha\in \operatorname{N}_1(\mathcal X)_{\mathbb R}$,  and non-zero torsion-free coherent sheaves on $\mathcal X$, then with respect to the $\alpha$-slope $\mu_\alpha(-)$ and $\alpha$-(semi-)stability we have the following:
\begin{enumerate} [label=(\alph*)]
\item \label{T:main1} Maximally destabilizing sub-sheaves exist (\Cref{P:GKP16-2.24});
\item \label{T:main2} Harder--Narisimhan filtrations exist and are unique 
(\Cref{L:HNFil});
\item \label{T:main3} Jordan--Holder filtrations of $\alpha$-semistable sheaves exist (\Cref{cor:JH});
\item \label{T:main4} Tensor products of $\alpha$-semi-stable sheaves are $\alpha$-semi-stable (\Cref{T:GKP16-4.2}).
\end{enumerate}
\end{teoalpha}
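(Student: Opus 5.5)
The plan is to reduce to the known case of finite quotient stacks, or at least to a variety, by choosing a good cover, and to transport the formal arguments of \cite{GKP16} and \cite{CPFol19} to $\mathcal X$. Since $\mathcal X$ is smooth, proper, Deligne--Mumford, with projective coarse space $\pi:\mathcal X\to X$, it is a global quotient $[P/\mathrm{GL}_n]$. By Kresch--Vistoli there is a finite flat surjective morphism $f:V\to\mathcal X$ from a smooth projective variety $V$. Slopes on $\mathcal X$ will be computed via $f$ using the projection formula in Vistoli's intersection theory: $c_1(f^*\mathcal E)\cdot f^*\alpha=\deg(f)\,c_1(\mathcal E)\cdot\alpha$. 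Here $f^*\alpha$ is movable on $V$, because $f_*$ sends pseudo-effective divisors to pseudo-effective divisors. The same projection formula shows that $c_1(\cdot)\cdot\alpha$ is additive in short exact sequences and depends only on codimension-one data. In particular, $\mu_\alpha(\mathcal F)\le\mu_\alpha(\mathcal F^{\mathrm{sat}})$ for saturations, because the quotient $\mathcal F^{\mathrm{sat}}/\mathcal F$ is torsion with effective $c_1$ and movable classes pair nonnegatively with effective divisors.

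For \ref{T:main1}, the key step is a boundedness statement: slopes of subsheaves of a fixed torsion-free $\mathcal E$ are bounded above, and only finitely many values occur in the relevant range. I would prove this by pulling back. Any $\mathcal F\subseteq\mathcal E$ gives $f^*\mathcal F\subseteq f^*\mathcal E$, and $\mu_{f^*\alpha}$ is bounded above on subsheaves of $f^*\mathcal E$ by \cite[\S2]{GKP16}. Discreteness needs more care. I would embed $\mathcal E$ into a locally free sheaf: via the resolution property on quotient stacks, $\mathcal E\hookrightarrow\mathcal E^{\vee\vee}$, and then $\mathcal E^{\vee\vee}$ is a subsheaf of a sum of twists of a generating bundle. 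Together with the fact that $c_1$ takes values in a lattice $N^1(\mathcal X)_{\mathbb Z}$, this shows the set of possible values $c_1(\mathcal F)\cdot\alpha$ with bounded-above slope and ranks $\le\operatorname{rk}\mathcal E$ attains a maximum. Here I would cite the argument of \cite[Prop.~2.21]{GKP16}, with Neumann–Grauert type bounds replaced by pulling back to $V$.

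Next I take $\mathcal F_{\max}$ to be a saturated subsheaf of maximal slope and, among those, of maximal rank. Uniqueness then follows from the standard argument: for two such subsheaves $\mathcal F_1,\mathcal F_2$, the exact sequence $0\to\mathcal F_1\cap\mathcal F_2\to\mathcal F_1\oplus\mathcal F_2\to\mathcal F_1+\mathcal F_2\to0$ and additivity give $\mu(\mathcal F_1+\mathcal F_2)\ge\mu_{\max}$, which forces equality. Parts \ref{T:main2} and \ref{T:main3} are then purely formal consequences of \ref{T:main1}, additivity of $c_1\cdot\alpha$ and $\operatorname{rk}$, and the Noetherian property of coherent sheaves on $\mathcal X$, exactly as for varieties. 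For \ref{T:main2}, I iterate the maximal destabilizing subsheaf on quotients. For \ref{T:main3}, I choose saturated subsheaves of the same slope of minimal rank.

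For \ref{T:main4}, the main obstacle, I would follow \cite[Thm.~4.2]{GKP16} and \cite{CPFol19}. In the variety case one uses that $\alpha$-semistability is preserved by the relevant pullbacks and that semistable sheaves are limits of stable ones, via Hermitian–Einstein metrics, or alternatively a perturbation to complete-intersection classes. My plan is to show that $\mathcal E$ is $\alpha$-semistable if and only if $f^*\mathcal E$ is $f^*\alpha$-semistable whenever $f:V\to\mathcal X$ is Galois over its image. The forward direction uses uniqueness of the maximal destabilizing subsheaf: it is invariant under the Galois group, so it descends, first to the quotient stack $[V/G]$ and then to $\mathcal X$ through the finite flat map. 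For this I would arrange the cover by taking a Galois closure of the Kresch--Vistoli cover, so that $\mathcal X$ is dominated by $[V/G]$ with $[V/G]\to\mathcal X$ finite and birational-type (an isomorphism over a dense open substack). The hard point is precisely this descent through $[V/G]\to\mathcal X$ when it is not étale. I would try to handle it by working on the big open locus where the map is an isomorphism and then taking saturations, using that slopes only see codimension one. Once descent is in place, tensor products follow from \cite{CPFol19} on $[V/G]$ applied to $f^*(\mathcal E\otimes\mathcal F)=f^*\mathcal E\otimes f^*\mathcal F$, together with the torsion-free quotient modulo torsion.
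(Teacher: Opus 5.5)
Your treatment of \ref{T:main2}, \ref{T:main3}, the saturation inequality, uniqueness in \ref{T:main1}, and the upper bound on slopes via $f^*$ is fine; the last fills in a point the paper leaves implicit. The proposal breaks at the step you yourself flag in \ref{T:main4}.

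If the Kresch--Vistoli cover $f:V\to\mathcal X$ is not \'etale (the only case not already settled by the Galois argument of \Cref{L:Galstab}), it is branched along a divisor $\mathcal B\subseteq\mathcal X$. Take any $h:[W/G]\to\mathcal X$ with $W\to[W/G]$ \'etale and $W\to\mathcal X$ factoring through $V$. Multiplicativity of ramification indices at codimension-one points shows that $h$ is \emph{not} an isomorphism over the generic point of $\mathcal B$; it is of root-stack type there. So there is no big open substack on which $h$ is an isomorphism. The discrepancy sits exactly in codimension one, along $h^{-1}(\mathcal B)$, which is not contracted and pairs nontrivially with $h^*\alpha$.

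Consequently, the sheaf $\mathcal F\subseteq\mathcal E$ obtained by pushing the destabilizing $\mathcal F'$ down from the isomorphism locus and saturating could a priori have $h^*\mathcal F\neq\mathcal F'$ along $h^{-1}(\mathcal B)$. That would shift the slope comparison by a multiple of $[h^{-1}(\mathcal B)]\cdot h^*\alpha$, and ``slopes only see codimension one'' does not rule this out. The missing idea is a reason why a \emph{saturated} subsheaf of the pull-back descends across the branch divisor.

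The paper gets this as follows. Rydh's theorem (\Cref{T:Rydh}) gives, after a stacky blow-up $\sigma:\mathcal X'\to\mathcal X$, an honest presentation $\mathcal X'\cong[\mathcal V'/G]$ dominating $V$. This is combined with \Cref{L:StkyBUstab}, whose root-stack case uses that the relative stabilizers act trivially on the fibers of $\sigma^*\mathcal E$, hence on those of the saturated $\mathcal F'$; so $\sigma^*\sigma_*\mathcal F'\cong\mathcal F'$ away from codimension two.

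Within your framework, one repair would be to note that $h$ is flat over a big open substack, by miracle flatness, since $[W/G]$ is regular in codimension one. Then the pull-back of the saturation of the descended sheaf is saturated in $h^*\mathcal E$ and agrees generically with $\mathcal F'$, hence equals it there. You would also need an equivariant resolution of the merely normal Galois closure $W$ before invoking \cite{CPFol19}, and some care to construct $[W/G]\to\mathcal X$ at all when $\mathcal X$ has nontrivial generic stabilizer.

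There is a second gap in \ref{T:main1}. You obtain the maximum from boundedness plus discreteness, with discreteness coming from $c_1(\mathcal F)\in\operatorname{N}^1(\mathcal X)_{\mathbb Z}$. But $\alpha$ is an arbitrary real movable class, so $c_1(\mathcal F)\cdot\alpha$ lies in $\operatorname{N}^1(\mathcal X)_{\mathbb Z}\cdot\alpha$, which is dense in $\mathbb R$ unless $\alpha$ is proportional to a rational class.

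The embedding $\mathcal E\hookrightarrow\mathcal L^{\oplus N}$ only confines $c_1(\mathcal F)$ to $\operatorname{rk}(\mathcal F)\,c_1(\mathcal L)$ minus the pseudo-effective cone. That yields finitely many candidates in a slope window when $\alpha$ is strictly positive on nonzero pseudo-effective classes, but not for boundary classes. Boundary classes are exactly the ones needed later, since pull-backs of $\alpha$ under blow-ups or resolutions vanish on exceptional divisors (\Cref{R:EdotMov0}). For instance, on a simple abelian surface of Picard number $2$, take $\alpha$ spanning an irrational boundary ray of the nef cone. Then the slopes $-D\cdot\alpha$ of the subsheaves $\mathcal O(-D)\subseteq\mathcal O$, with $D$ ample, accumulate at $0$.

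The paper instead follows \cite[Prop.~2.22]{GKP16}. If the supremum is not attained, take saturated $\mathcal F_i\subseteq\mathcal E$ of a common rank $r$, maximal among such sequences, with slopes increasing to $\mu_\alpha^{\max}(\mathcal E)$. For $i\neq j$ the sum $\mathcal F_i+\mathcal F_j$ has rank $>r$ and slope $>\mu_\alpha^{\max}(\mathcal E)-\frac1i-\frac1j$, contradicting the maximality of $r$.
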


For the most part, \Cref{T:main} is a fairly straightforward extension of the standard treatment of slope stability in say \cite{huybrechts_lehn_2010}.  However, as with the case of smooth projective varieties, the most delicate part of the presentation is showing \ref{T:main4},  and in particular, showing that the tensor product of two semi-stable vector bundles is semi-stable.  
In the proof of the theorem on smooth projective varieties in \cite[Thm.~5.1, Prop.~5.2]{CPT11}, which is closely modeled on the argument in \cite[Thm.~3.1.4]{huybrechts_lehn_2010} for slope stability with respect to an ample line bundle, a central step is to reduce to the existence of a certain finite cover, and thereby reduce to showing  that semi-stability is equivalent to semi-stability after pull-back by a finite cover (e.g., \cite[Lem.~3.2.2]{huybrechts_lehn_2010}).  For projective varieties, this assertion about stability and finite covers is relatively easy to prove, using the elementary fact that for normal projective varieties, any finite cover is dominated by a finite \emph{Galois} cover.    The stability argument for Galois covers follows quickly, as one can easily descend destabilizing sub-sheaves from the Galois cover.  

This naturally leads to an approach for the DM stacks we consider here, since they all admit a finite flat cover $q:V\to \mathcal X$ from a smooth projective variety $V$.  In other words, if one knew that slope semi-stability for DM stacks was equivalent to slope semi-stability after pull back by a finite cover, one could easily reduce the proof of \Cref{T:main}\ref{T:main4}  to the case of varieties.  Unfortunately, for DM stacks, it is not the case that every finite cover $q:V\to \mathcal X$ from a normal projective variety $V$ is dominated by a Galois cover of $\mathcal X$ from a normal projective variety, and so one must take a different approach than in the case of varieties.  

This is the main complication arising for the  stacks appearing here that does not arise in the case $\mathcal X=[V/G]$ considered in \cite{CPFol19}.  Consequently, the main technical point in proving \Cref{T:main}\ref{T:main4} is to establish that slope semi-stability for DM stacks is equivalent to slope semi-stability after pull back by a finite cover to a smooth projective variety; we prove this in \Cref{P:StabOnV}.  While one could approach this via faithfully flat descent, for our proof of  \Cref{P:StabOnV} we instead use a result of Rydh \cite[Thm.~C]{rydhComact},  which implies that after a stacky blow-up $\mathcal X'\to \mathcal X$, the finite cover $q:V\to \mathcal X$ is dominated by a Galois cover in the sense that there is a smooth DM stack $\mathcal V'$ with projective coarse moduli space, with $\mathcal V'$ dominating $V$, and a finite group $G$ acting on $\mathcal V'$, such that $\mathcal X'=[\mathcal V'/G]$; see \Cref{T:Rydh}.  This turns out to be enough to prove \Cref{P:StabOnV}.

\medskip 

For applications of $\alpha$-stability,  a useful tool is the so-called maximal $\alpha$-slope,  $\mu_\alpha^{\max}(\mathcal E)$, of a nonzero torsion-free coherent  sheaf $\mathcal E$, which is defined as the supremum of the $\alpha$-slopes of all nonzero subsheaves $0\subsetneq \mathcal F\subseteq \mathcal E$. We establish some of the basic properties of $\mu_\alpha^{\max}(-)$ on DM stacks, as well  (generalizing, e.g., \cite[Cor.~2.24, Thm.~4.2]{GKP16}, \cite[Lem.~2.5]{CPFol19}): 

\begin{teoalpha}\label{T:max}
Let $\mathcal X$ be a smooth proper integral Deligne--Mumford stack over $\mathbb C$ with projective coarse moduli space.  For a movable class of curves $\alpha\in \operatorname{N}_1(\mathcal X)_{\mathbb R}$,  and non-zero torsion-free coherent sheaves $\mathcal E$ and $\mathcal E'$ on $\mathcal X$, we have:
\begin{enumerate} [label=(\alph*)]
\item \label{T:max1} $\mu_\alpha^{\max}(\mathcal E)$ is a maximum (\Cref{P:GKP16-2.24});
\item  \label{T:max2} If $\mu_\alpha^{\max}(\mathcal E)<0$, then $H^0(\mathcal X,\mathcal E)=0$ (\Cref{L:CP2.5});
\item \label{T:max3} $\mu_\alpha^{\max}(\mathcal E\otimes \mathcal E')=\mu_\alpha^{\\max}(\mathcal E)+\mu_\alpha^{\max}(\mathcal E')$ and $\mu_\alpha^{\max}(\operatorname{Sym}^m(\mathcal E))=m\cdot \mu_\alpha^{\max}(\mathcal E)$ (\Cref{T:GKP16-4.2} and \Cref{T:GKP16-4.2Sym}).
\end{enumerate}

\end{teoalpha}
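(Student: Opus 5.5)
We prove the three parts in order; (c) rests on Theorem~\ref{T:main}\ref{T:main4} and on the finite-cover comparison of \Cref{P:StabOnV}.

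\textbf{Part (a).} We follow \cite[Cor.~2.24]{GKP16}. Pick a finite flat cover $q\colon V\to\mathcal X$ from a smooth projective variety, or work on the projective coarse space $\pi\colon\mathcal X\to X$. The goal is a uniform upper bound on $\mu_\alpha(\mathcal F)$ over all subsheaves $\mathcal F\subseteq\mathcal E$, together with discreteness of the set of attained values. Since $q^*$ preserves inclusions and multiplies degrees by $\deg q$, the pullback $q^*\alpha$ is movable on $V$ and
\[
\mu_{q^*\alpha}(q^*\mathcal F)=\deg(q)\,\mu_\alpha(\mathcal F).
\]
The bound therefore follows from the known bound on $V$ applied to $q^*\mathcal E$. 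For discreteness, we may replace $\mathcal F$ by its saturation, which only raises the slope. Then $c_1(\mathcal F)$ is $\det\mathcal F$, a line bundle on $\mathcal X$. Some fixed power of it descends to $X$, so $c_1(\mathcal F)$ lies in a lattice $\frac1N\operatorname{N}^1(X)_{\mathbb Z}$. One subtlety remains: $\alpha$ is real, so this lattice alone does not give discreteness. We instead argue as in GKP, combining the bound with a Noetherian/boundedness argument on the saturated subsheaves of maximal slope. This is presumably the content of \Cref{P:GKP16-2.24}, which yields the maximal destabilizing subsheaf, and hence the maximum.

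\textbf{Part (b).} Suppose $0\neq s\in H^0(\mathcal X,\mathcal E)$. It gives $\mathcal O_{\mathcal X}\to\mathcal E$, which is injective because $\mathcal X$ is integral and $\mathcal E$ is torsion-free. Its image is a rank-one subsheaf of slope $0$, so $\mu^{\max}_\alpha(\mathcal E)\ge0$, contradicting the hypothesis. (The saturation of the image has slope $\ge 0$ by movability, since the saturation has effective $c_1$.)

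\textbf{Part (c).} Let $\mathcal E_1,\mathcal E_1'$ be the maximal destabilizing subsheaves from (a). Both are $\alpha$-semistable, so by Theorem~\ref{T:main}\ref{T:main4} the sheaf $\mathcal E_1\otimes\mathcal E_1'$, taken modulo torsion, is semistable. Its slope is $\mu^{\max}(\mathcal E)+\mu^{\max}(\mathcal E')$, and it maps to $\mathcal E\otimes\mathcal E'$ modulo torsion injectively generically, which gives $\ge$. For $\le$, take the HN filtrations of $\mathcal E$ and $\mathcal E'$. The tensor product $\mathcal E\otimes\mathcal E'$ is then filtered with graded pieces $\operatorname{gr}_i\otimes\operatorname{gr}'_j$, each semistable with slope at most $\mu^{\max}+\mu'^{\max}$. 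A subsheaf $\mathcal G$ of maximal slope maps nontrivially to some piece, and semistability forces $\mu(\mathcal G)\le\mu^{\max}+\mu'^{\max}$. Here we use the standard fact that $\operatorname{Hom}(\mathcal A,\mathcal B)=0$ when $\mathcal A,\mathcal B$ are semistable with $\mu(\mathcal A)>\mu(\mathcal B)$, which is proved verbatim on $\mathcal X$.

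For $\operatorname{Sym}^m$, the sheaf $\operatorname{Sym}^m\mathcal E$ is a direct summand of $\mathcal E^{\otimes m}$ in characteristic zero. Hence $\mu^{\max}(\operatorname{Sym}^m\mathcal E)\le m\,\mu^{\max}(\mathcal E)$. Conversely, $\operatorname{Sym}^m\mathcal E_1\subseteq\operatorname{Sym}^m\mathcal E$ is a summand of the semistable sheaf $\mathcal E_1^{\otimes m}$, so it is semistable of slope $m\,\mu(\mathcal E_1)$.

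The main obstacle is already absorbed into Theorem~\ref{T:main}\ref{T:main4}, namely the finite-cover argument of \Cref{P:StabOnV}. In the steps above, the delicate point is the existence of the maximum in (a) when $\alpha$ is not rational. There we rely on the pullback to $V$ and the GKP boundedness arguments.
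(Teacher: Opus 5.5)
Your parts (b) and (c) work once (a) is available. The gap is that (a) itself is not proved, and everything else depends on it.

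\textbf{Where (a) falls short.} Part (c) uses the maximal destabilizing subsheaves $\mathcal E_1,\mathcal E_1'$ and HN filtrations. Even \Cref{T:main}\ref{T:main4}, which you invoke, is proved in the paper using maximal destabilizing subsheaves on the stacks $\mathcal V'$ and $\mathcal X'$ (\Cref{L:Galstab}, \Cref{L:StkyBUstab}). So (a) on stacks cannot be bypassed.

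Your pullback along $q$ gives only finiteness, $\mu_\alpha(\mathcal F)\le\frac{1}{\deg q}\,\mu^{\max}_{q^*\alpha}(q^*\mathcal E)$. That is a worthwhile addition, since the paper's argument takes $\mu^{\max}_\alpha(\mathcal E)<\infty$ for granted. It does not give attainment. To transfer the maximum from $V$, the maximal destabilizing subsheaf of $q^*\mathcal E$ would have to descend along the non-Galois cover $q$. That is the same descent problem the paper circumvents with Rydh's theorem in \Cref{P:StabOnV}.

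The lattice argument fails for real $\alpha$, as you note. Your fallback, a ``Noetherian/boundedness argument on the saturated subsheaves of maximal slope,'' is never specified. A maximizing sequence of saturated subsheaves is not nested, and the argument in GKP and in the paper is not a boundedness argument at all.

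\textbf{The missing idea.} It is the rank-maximization argument behind \Cref{P:GKP16-2.24}, which works directly on $\mathcal X$ using only additivity of $c_1$ and \Cref{L:muSat}.
\begin{enumerate}
\item Suppose the supremum is not attained. Choose saturated $\mathcal F_i\subseteq\mathcal E$ of a common rank $r$ with $\mu_\alpha^{\max}(\mathcal E)-\frac1i<\mu_\alpha(\mathcal F_i)<\mu_\alpha(\mathcal F_{i+1})<\mu_\alpha^{\max}(\mathcal E)$, taking $r$ maximal among all such sequences.
\item For $i<j$, the sheaves $\mathcal F_i\neq\mathcal F_j$ are saturated of equal rank, so neither contains the other and $\operatorname{rk}(\mathcal F_i+\mathcal F_j)>r$.
\item The sequence $0\to\mathcal F_i\cap\mathcal F_j\to\mathcal F_i\oplus\mathcal F_j\to\mathcal F_i+\mathcal F_j\to0$, together with $\mu_\alpha(\mathcal F_i\cap\mathcal F_j)\le\mu^{\max}_\alpha(\mathcal E)$, gives $\mu_\alpha(\mathcal F_i+\mathcal F_j)>\mu^{\max}_\alpha(\mathcal E)-\frac1i-\frac1j$.
\item Saturating and passing to a subsequence produces a maximizing sequence of larger rank, a contradiction.
\end{enumerate}
The containment property needed for uniqueness and HN filtrations is a similar computation with $\mathcal F+\mathcal F'$.

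\textbf{Comparison of the rest.} With (a) in place, the rest of your proposal goes through and differs from the paper only mildly.
\begin{enumerate}
\item Your (b) is a direct version of the paper's route through $\mu^{\min}_\alpha(\mathcal O_{\mathcal X})=0$ and \Cref{L:CP2.5}.
\item For tensor products, you filter both factors by HN filtrations and apply \Cref{T:main}\ref{T:main4} to each $\operatorname{gr}_i\hat\otimes\operatorname{gr}'_j$. The paper, following GKP, filters only one factor with $\alpha$-stable quotients and argues in two steps. Either way one needs the routine bookkeeping on a big open set where all pieces are locally free.
\item For $\operatorname{Sym}^m$ you use the characteristic-zero splitting $\operatorname{Sym}^m\mathcal E\hookrightarrow\mathcal E^{\otimes m}$ and the fact that a direct summand of a semistable sheaf is semistable of the same slope. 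This is cleaner than the paper's appeal to the formula for $c_1(\operatorname{Sym}^m\mathcal F)$.
\end{enumerate}
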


The  proof of \Cref{T:max}\ref{T:max1} and \ref{T:max2} are fairly straightforward generalizations of the proofs in  \cite{GKP16,CPFol19} for smooth projective varieties.  As in the case of projective varieties, \Cref{T:max}\ref{T:max3}  is the most involved to prove; however, the central piece of the argument is \Cref{T:main}\ref{T:main4}, showing that the tensor product of semi-stable vector bundles is semi-stable; with that result, the proof of \Cref{T:main}\ref{T:main4} follows that of \cite{GKP16}.

\medskip 

Finally, as an  application of the results in this paper, we prove a version of the Bogomolov--Gieseker inequality for DM surfaces and movable classes of curves:

\begin{teoalpha}[Bogomolov--Gieseker for DM stacks]\label{T:BGmain}
Let $\mathcal X$ be a smooth proper integral Deligne--Mumford stack over $\mathbb C$ of dimension $2$ with projective coarse moduli space, let  $\alpha\in \operatorname{N}_1(\mathcal X)_{\mathbb R}$ be a non-zero movable curve class,  and let $\mathcal E$ be a vector bundle of rank $r$ on $\mathcal X$. If $\mathcal E$ is $\alpha$-semi-stable, then
$$
\Delta(\mathcal E):=2r\cdot c_2(\mathcal E)-(r-1)\cdot c_1(\mathcal E)^2\ge 0.
$$
\end{teoalpha}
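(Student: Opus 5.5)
The plan is to reduce to a smooth projective surface via a finite cover, and there to invoke the Bogomolov--Gieseker inequality for movable classes on smooth projective surfaces. Every movable class on a smooth projective surface is nef, since pseudo-effective divisors pair non-negatively with it, and the nef cone in dimension $2$ is the closure of the ample cone. The classical inequality therefore extends to nef $\alpha$ by an openness/limit argument (as in \cite{GKP16,GKPT19}).

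\emph{Step 1: pass to a finite cover.} Choose a finite flat surjective morphism $q:V\to\mathcal X$ from a smooth projective surface $V$. The introduction promises such covers, and we may take $V$ integral of dimension $2$. Both Chern classes and $\alpha$ pull back well. We set $q^*\alpha$ to be the class in $\operatorname{N}_1(V)_{\mathbb R}$ determined by the projection formula, $D\cdot q^*\alpha = q_*D\cdot\alpha$ (pushforward of divisors in Vistoli's theory with $\mathbb Q$-coefficients). We must check that $q^*\alpha$ is movable. If $D$ is pseudo-effective on $V$, then $q_*D$ is pseudo-effective on $\mathcal X$, so $q^*\alpha\cdot D=\alpha\cdot q_*D\ge 0$. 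We also need $q^*\alpha\neq 0$; this holds because $q^*$ followed by $q_*$ multiplies by $\deg q$ on cycle classes.

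\emph{Step 2: semistability is preserved.} By \Cref{P:StabOnV}, $\mathcal E$ is $\alpha$-semistable on $\mathcal X$ if and only if $q^*\mathcal E$ is $q^*\alpha$-semistable on $V$. Hence $q^*\mathcal E$ is a $q^*\alpha$-semistable vector bundle of rank $r$ on $V$.

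\emph{Step 3: apply the variety case and push down.} Applying the Bogomolov--Gieseker inequality on $V$ to the nef class $q^*\alpha$ gives $\Delta(q^*\mathcal E)\ge 0$. Chern classes commute with flat pullback, so $\Delta(q^*\mathcal E)=q^*\Delta(\mathcal E)$. Degrees of $0$-cycles satisfy $\deg_V q^*\Delta(\mathcal E)=\deg(q)\cdot\deg_{\mathcal X}\Delta(\mathcal E)$, again with $\mathbb Q$-valued degrees on the stack. Dividing by $\deg q>0$ gives $\Delta(\mathcal E)\ge 0$.

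\emph{Main obstacle.} The substantive input is \Cref{P:StabOnV}, which is already established. The remaining delicacy is the variety statement for a merely nef (movable) class rather than an ample one. The first thing to try is a perturbation. Write $\alpha_\epsilon=q^*\alpha+\epsilon H$ with $H$ ample. The Harder--Narasimhan filtration of $q^*\mathcal E$ with respect to $\alpha_\epsilon$ stabilizes for small $\epsilon$, because the relevant slopes range over a discrete set. Semistability with respect to $q^*\alpha$ therefore gives, for the $\alpha_\epsilon$-HN factors $\mathcal E_i$, slopes that are all nearly equal. From there we combine the Hodge index inequality $c_1^2\le(c_1\cdot\alpha_\epsilon)^2/\alpha_\epsilon^2$ on the differences $c_1(\mathcal E_i)/r_i-c_1(\mathcal E_j)/r_j$ with the classical inequality for each factor, then let $\epsilon\to 0$. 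If this proves too intricate, we cite the known result for movable classes on surfaces directly \cite{GKP16,GKPT19}.
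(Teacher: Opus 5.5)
Your argument is the paper's proof. Pass to a finite flat cover $q:V\to\mathcal X$, use \Cref{P:StabOnV} to get $q^*\alpha$-semistability of $q^*\mathcal E$, apply the surface result for movable classes \cite[Thm.~5.1]{GKP16} on $V$, and descend via $\deg(c\cap q_*[V])=\deg(q)\deg(c\cap[\mathcal X])$. The paper simply cites \cite[Thm.~5.1]{GKP16} rather than perturbing. That is just as well: your claim that the $\alpha_\epsilon$-HN filtration stabilizes ``because the slopes range over a discrete set'' is not justified for a real class $q^*\alpha$. It is also unnecessary, since the $O(\epsilon)$ spread of $\alpha_\epsilon$-HN slopes forced by $\alpha$-semistability, together with $\alpha_\epsilon^2\ge 2\epsilon\,(q^*\alpha\cdot H)>0$, already makes the Hodge-index correction terms tend to $0$.
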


This generalizes the classical result of Bogmolov--Gieseker  \cite{G79bogomolov} for smooth projective surfaces and ample curve classes (see also  \cite[Thm.~3.1.4 and 7.3.1]{huybrechts_lehn_2010} and  \cite[Cor.~4.7]{miyaoka87}), as well as the recent result   \cite[Thm.~5.1]{GKP16},   which handles the case of  smooth projective surfaces and movable curve classes.  In the case of \Cref{T:BGmain} where the movable curve class $\alpha$ is obtained as the pull back of an ample curve class on the coarse moduli space, the result follows from 
\cite[Thm.~1.1 and p.32]{JK24BGS}.   
\Cref{T:BGmain} is essentially an immediate consequence of our \Cref{P:StabOnV},  that slope semi-stability for DM stacks is equivalent to slope semi-stability after pull back by a finite cover to a smooth projective variety, which allows one to reduce to the case of smooth projective surfaces (i.e., to \cite[Thm.~5.1]{GKP16}).  
We give the proof in \Cref{S:BG}. 
The theorem is stated only for locally-free sheaves, rather than for torsion-free sheaves, in order to avoid the topic of the Chern classes $c_2(-)$ for torsion-free sheaves on stacks.  By considering a more restricted class of movable curves, we prove a version of \Cref{T:BGmain} for higher dimensional stacks in \Cref{T:BGn}.  
In \Cref{S:MY}, we also consider 
Miyaoka--Yau type inequalities.  Our results in that direction are fairly limited, essentially only applying to the case $\mathcal X=[V/G]$, where $V$ is a smooth projective variety and $G$ is a finite group acting on $G$.  These cases were already known in the literature.

Finally, we note that the primary application we had in mind, motivating our  development of  $\alpha$-stability for stacks here, is an application to foliations and birational geometry of moduli stacks, which will appear in forthcoming work, generalizing results of \cite{CPFol19} for the case where $\mathcal X=[V/G]$.
The present article, as well as the forthcoming work just mentioned, are  the second  and third in a series with the aim of generalizing results of \cite{PS17, WW23} on Viehweg hyperbolicity  to the case of Deligne--Mumford stacks.

\subsection*{Acknowledgements}
The first named author thanks Mihnea Popa for  conversations on the topic, which led to this project.  The first named author also thanks to Jonathan Wise and David Rydh for helpful conversations about the geometry of stacks. The second named author thanks his advisor, Mircea Musta\c{t}\u{a}, for useful discussions and all the support provided, including many
helpful comments on an earlier draft of this paper. 
  The authors are also grateful to the organizers of the Simons Collaborations on Moduli of Varieties Workshop at the University of Utah in November 2024, where their  work on this project began.

\section{Preliminaries}

\subsection{Terminology}

We work over $\mathbb C$.  
A \emph{variety} is an integral separated scheme of finite type over $\mathbb C$. 
An \emph{alteration} $X'\to X$ is a surjective projective generically
finite   morphism of schemes over $\mathbb C$.  
We use the definition of a \emph{Deligne--Mumford (DM) stack} in \cite[Def.~4.1]{LMB}. Note that this differs from the definition in \cite{stacks-project} in that there is the additional hypothesis in \cite[Def.~4.1]{LMB} that the diagonal be representable, separated, and quasi-compact.  We direct the reader to \cite[App.~B]{CMW18} for a discussion of the relationship among various definitions of DM stacks in the literature (see in particular \cite[Fig.~1]{CMW18}).  We emphasize that, with the definition of DM stack that we are using, a morphism from a scheme to a DM stack is schematic (representable by schemes); see e.g., \cite[Lem.~B.20 and Lem.~B.12]{CMW18}.

\subsection{Structure of DM stacks}\label{S:DM-intro}

  The general set-up will be a smooth proper (resp.~separated) integral DM stack $\mathcal X$ of finite type over $\mathbb C$ with coarse moduli space $\pi: \mathcal X\to X$, with the added assumption that the algebraic space $X$ be a projective (resp.~quasi-projective) variety. 
Recall that such a stack admits a finite flat 
morphism $q:V\to \mathcal X$ from a smooth projective
scheme $V$  (\cite[Thm.~1]{KV04} and \cite[Thm.~4.4]{kresch09}, see also \cite[\href{https://stacks.math.columbia.edu/tag/03B6}{\S 03B6}]{stacks-project}); note that $q$ is schematic and projective.  
In this situation we have that $X$ is normal, $\mathbb Q$-factorial, with at worst klt singularities.  The morphism $\pi:\mathcal X\to X$ is flat over the smooth locus of $X$; flatness is an \'etale local property, and so it suffices to consider the case $\mathcal X=[U/G]$ for some smooth variety $U$ and a finite group $G$.  Then, from say \cite[Cor.~14.12]{GW20}, it suffices to show that $U\to U/G$ is flat over the smooth locus of the quotient, which follows from the miracle of flatness \cite[Thm.~23.1, p.179]{matsumura}.

For brevity, we will say that such a stack $\mathcal X$ is a global DM quotient stack if there is a smooth projective (resp.~quasi-projective) variety $V$ over $\mathbb C$ and a finite algebraic group $G$ over $\mathbb C$ acting on $V$ such that $\mathcal X\cong [V/G]$; note that this implies that $X=V/G$.  For context, recall that  $\mathcal X$ is a global quotient stack  if and only if there exists a smooth projective (resp.~quasi-projective) variety  $V'$ and a finite \emph{\'etale} morphism $q':V'\to \mathcal X$ (see the \emph{proof} of \cite[Thm.~(6.1)]{LMB}).

\subsection{Line bundles}

We also recall \cite[Lem. 2.1.2]{AGV08} that for any line bundle $\mathcal L$ on $\mathcal X$, there is a natural number $e$ such that $\mathcal L^{\otimes e}$ is the pull-back via $\pi$ of a line bundle $M$ on $X$.
We also note that for any Zariski open substack $i:\mathcal U\hookrightarrow  \mathcal X$ with complement of codimension at least two, the pull-back morphism
\begin{equation}\label{E:PicXU}
i^*:\operatorname{Pic}(\mathcal X)\to \operatorname{Pic}(\mathcal U)
\end{equation}
is an isomorphism.  Indeed, any line bundle on $\mathcal U$ extends uniquely to a line bundle on $\mathcal X$, since $\mathcal X$ is smooth (and so in particular, $S_2$).   For instance, considering the line bundle as a line bundle on an associated groupoid \cite[\href{https://stacks.math.columbia.edu/tag/03LH}{\S 03LH}]{stacks-project}, the result follows from the fact that on a smooth scheme,  line bundles and morphisms of line bundles extend uniquely over codimension-$2$ loci.

\subsection{Determinants of torsion-free coherent sheaves}
For a torsion-free coherent sheaf $\mathcal F$ on $\mathcal X$ we can define a determinant:
\begin{equation}\label{E:det-def}
\det \mathcal F :=\left(\bigwedge ^{\operatorname{rk}\mathcal F} \mathcal F\right)^{\vee \vee}.
\end{equation}
A key observation is that $\mathcal F$ is locally free over a Zariski open substack $i:\mathcal U\hookrightarrow \mathcal X$ of codimension at least two.  Indeed, this is a local question, and so we can reduce to the question of torsion-free sheaves on smooth varieties.  Considering local rings of codimension $1$, one can see that $\mathcal F$ is locally free in codimension $1$.  Consequently, if $(i^*)^{-1}:\operatorname{Pic}(\mathcal U)\to \operatorname{Pic}(\mathcal X)$ is the inverse of \eqref{E:PicXU}, then 
\begin{equation}\label{E:detVBU}
\det \mathcal F = (i^*)^{-1}(\bigwedge^{\operatorname{rk} \mathcal F} \mathcal F|_{\mathcal U});
\end{equation}
 i.e., the determinant is the unique extension of the determinant of the vector bundle $\mathcal F|_{\mathcal U}$ over $\mathcal U$ to a line bundle on all of $\mathcal X$.

If $\mathcal X=X$ is a smooth projective variety, then this definition of $\det$ clearly agrees with the one in \cite{KMdet}.   Moreover, if $q:V\to \mathcal X$ is a finite flat morphism from a smooth projective variety $V$, then $\det (q^*\mathcal F)=q^*\det (\mathcal F)$.

We can use \eqref{E:detVBU} to show that if 
$$
0\to \mathcal F'\to \mathcal F\to \mathcal F''\to 0
$$
is a short exact sequence of torsion-free coherent sheaves on $\mathcal X$, then 
$$
\det \mathcal F \cong \det \mathcal F' \otimes \det \mathcal F''.
$$
Indeed, there is a common Zariski open substack $i:\mathcal U\hookrightarrow \mathcal X$ with complement of codimension at least two such that all of the sheaves are locally free.  Then one can use that the result holds clearly for short exact sequences of vector bundles, together with \eqref{E:detVBU}.

\subsection{Determinants of coherent sheaves}\label{S:DetCoh}
Let $\mathcal G$ be a  coherent sheaf  on $\mathcal X$
that admits a surjection $\mathcal E\to \mathcal G\to 0$ from a torsion-free coherent sheaf $\mathcal E$.  Note that if $\mathcal X$ has generically trivial stabilizers, then such a surjection exists for any coherent sheaf $\mathcal G$ on $\mathcal X$ \cite[Thm.~1.2]{totaro_2004}, and one may even take $\mathcal E$ to be a vector bundle. 
For  a coherent sheaf $\mathcal G$ that admits a surjection from a torsion-free coherent sheaf $\mathcal E$, there is a short exact sequence
$$
0\to \mathcal F\to \mathcal E\to \mathcal G\to 0
$$
with $\mathcal F$ a torsion-free coherent sheaf.  From this we can define
\begin{equation}\label{E:detGG}
\det (\mathcal G):=\det (\mathcal E)\otimes (\det (\mathcal F))^{-1},
\end{equation}
which, \emph{a priori}, depends on the chosen short exact sequence.  We explain now that the definition is independent of the choice of short exact sequence.  Indeed, in the usual way, given another short exact sequence $0\to \mathcal F'\to \mathcal E'\to \mathcal G\to 0$ with $\mathcal E'$ a torsion-free coherent sheaf, we obtain a third, 
$$
\xymatrix@R=1em{
0 \ar[r]& \mathcal F' \ar[r]& \mathcal E'\ar[r]& \mathcal G\ar[r]& 0 \\
0 \ar[r]& \mathcal F'' \ar[r] \ar[u] \ar[d]& \mathcal E \oplus \mathcal E'\ar[r] \ar[u]\ar[d] & \mathcal G\ar[r] \ar@{=}[u] \ar@{=}[d]& 0 \\
0 \ar[r]& \mathcal F \ar[r]& \mathcal E\ar[r]& \mathcal G\ar[r]& 0 \\
}
$$
where the vertical arrows are quasi-isomorphisms of complexes.  Looking at the open substack $\mathcal U\subseteq \mathcal X$ with complement of codimension at least $2$, where $\mathcal E$, $\mathcal E'$, $\mathcal F$, $\mathcal F'$, and $\mathcal F''$   are all locally free, one arrives in the usual way that $\det (\mathcal E)\otimes (\det (\mathcal F))^{-1}\cong \det (\mathcal E\oplus \mathcal E')\otimes (\det (\mathcal F''))^{-1}\cong \det (\mathcal E')\otimes (\det (\mathcal F'))^{-1}$ (e.g., \cite{KMdet}).

If $\mathcal X=X$ is a smooth projective variety, then this definition of the determinant  clearly agrees with the one in \cite{KMdet}.   
In addition, if 
$$
0\to \mathcal G'\to \mathcal G\to \mathcal G''\to 0
$$
is a short exact sequence of coherent sheaves on $\mathcal X$, all of which   admit surjections from torsion-free coherent sheaves, then 
\begin{equation}\label{E:detLES}
\det \mathcal G \cong \det \mathcal G' \otimes \det \mathcal G''.
\end{equation}
Indeed, one can replace each sheaf by a two-step quasi-isomorphic complex of torsion-free coherent sheaves,  then restrict to the locus where the sheaves are locally free, and use properties of vector bundles and wedge products.

In addition, if $\mathcal G$ is locally free of rank $r$ on an open substack $\mathcal U\subseteq \mathcal X$, then

\begin{equation}\label{E:detUU}
(\det \mathcal G)|_{\mathcal U}= \bigwedge^r(\mathcal G|_{\mathcal U});
\end{equation}
the only thing to observe here is that if $0\to \mathcal F\to \mathcal E\to \mathcal G\to 0$ is a short exact sequence with $\mathcal E$ a torsion-free sheaf, then 
 there is an open substack $\mathcal U'\subseteq \mathcal U$ with complement of codimension $\ge 2$ such that $\mathcal F$ and $\mathcal E$ are also locally free on $\mathcal U'$.
The assertion $(\det \mathcal G)|_{\mathcal U'}= \bigwedge^r(\mathcal G|_{\mathcal U'})$ follows from the definition, and properties of wedge products. The equality then holds on $\mathcal U$, as well.
We will want to use the following assertion for torsion sheaves:

\begin{lem}\label{L:detTors}
If $\mathcal G$ is a torsion coherent sheaf on $\mathcal X$ that admits a surjection from a torsion-free coherent sheaf, then $\det \mathcal G$ is an  effective line bundle; i.e., admits a non-zero global section. 
\end{lem}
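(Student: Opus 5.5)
Take a short exact sequence $0\to\mathcal F\to\mathcal E\to\mathcal G\to 0$ with $\mathcal E$ torsion-free, as in \Cref{S:DetCoh}, so that $\det\mathcal G=\det\mathcal E\otimes(\det\mathcal F)^{-1}$. The plan is to produce a nonzero map of line bundles $\det\mathcal F\to\det\mathcal E$. Its corresponding section of $\det\mathcal E\otimes(\det\mathcal F)^{-1}$ is then nonzero, which proves the lemma.

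Since $\mathcal G$ is torsion, it vanishes at the generic point of the integral stack $\mathcal X$. Hence $\mathcal F\to\mathcal E$ is generically an isomorphism and $\operatorname{rk}\mathcal F=\operatorname{rk}\mathcal E=:r$. Apply $\bigwedge^r$ and double duals to get a morphism
$$
\det\mathcal F=\Big(\bigwedge^r\mathcal F\Big)^{\vee\vee}\longrightarrow\Big(\bigwedge^r\mathcal E\Big)^{\vee\vee}=\det\mathcal E.
$$
Functoriality of $\bigwedge^r$ and $(-)^{\vee\vee}$ is enough to construct this map on the stack, or one can build it on an atlas and descend, since everything is canonical. Alternatively, work on the open substack $i:\mathcal U\hookrightarrow\mathcal X$ with complement of codimension $\ge 2$ where $\mathcal E$ and $\mathcal F$ are both locally free. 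There one has $\bigwedge^r\mathcal F|_{\mathcal U}\to\bigwedge^r\mathcal E|_{\mathcal U}$, and it extends uniquely over $\mathcal X$ by \eqref{E:detVBU} and the extension property for morphisms of line bundles across codimension two (as in the discussion of \eqref{E:PicXU}).

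It remains to see that this map is nonzero. At the generic point, $\mathcal F\to\mathcal E$ is an isomorphism of $r$-dimensional vector spaces, so its determinant is an isomorphism. The section of $\det\mathcal E\otimes(\det\mathcal F)^{-1}$ is therefore nonzero on a dense open substack.

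The only step that needs care is the one producing a global section on the stack rather than on $\mathcal U$. On $\mathcal U$ it is a section of the line bundle $\mathcal Hom(\bigwedge^r\mathcal F|_{\mathcal U},\bigwedge^r\mathcal E|_{\mathcal U})$. I would extend it via the $S_2$ property, checked \'etale locally: pull back to an \'etale atlas, or to the groupoid presentation. There sections of line bundles on a smooth scheme extend across codimension-two loci, and the extensions agree on overlaps by uniqueness, so they descend. The result is a nonzero element of $H^0(\mathcal X,\det\mathcal G)$.
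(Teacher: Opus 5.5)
Your proof is correct, but it takes a different route from the paper.

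The paper's route is local and explicit. It pulls back to an \'etale atlas $p:U\to\mathcal X$ and reduces, at each codimension-one point of $U$, to finitely generated modules over a DVR. This identifies $p^*\det\mathcal G$ with $\mathcal O_U(D)$, where $D$ is the effective divisor whose multiplicity along each prime divisor is the length of $\mathcal G$ at its generic point. It then extends over codimension two and uses functoriality of the construction to get descent data back to $\mathcal X$.

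You instead take the determinant of the inclusion $\mathcal F\hookrightarrow\mathcal E$ itself. Since $\bigwedge^r$ and $(-)^{\vee\vee}$ are functors on coherent sheaves on $\mathcal X$, the induced map $\det\mathcal F\to\det\mathcal E$ is already a morphism of line bundles on the stack. Equivalently, it is a global section of $\det\mathcal E\otimes(\det\mathcal F)^{-1}=\det\mathcal G$. It is nonzero because it is an isomorphism over the dense open substack where $\mathcal G=0$ and $\mathcal F,\mathcal E$ are locally free.

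What your route buys is a shorter argument with no local commutative algebra and no descent. In particular, your final paragraph on extending across codimension two via the $S_2$ property is needed only for your alternative $\mathcal U$-based construction, not for the functorial one. What the paper's route buys is an explicit description of the zero divisor of the section, namely the length-weighted divisorial part of the support of $\mathcal G$. That is exactly the divisor of your canonical section, but the lemma and its later use with movable classes do not need this extra information.
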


\begin{proof}
Let $0 \to \mathcal F\to \mathcal E\to \mathcal G\to 0$ be a short exact sequence with $\mathcal E$ a torsion-free coherent sheaf.  We want to show that $\det (\mathcal E)\otimes (\det(\mathcal F))^{-1}$ is effective.  It suffices to pull-back by an \'etale cover $p:U\to \mathcal X$, and show that the pull-back $p^*(\det (\mathcal E)\otimes (\det(\mathcal F))^{-1})$ is the line bundle associated to an effective divisor $D$ on $U$, with descent data.  The argument in the case of varieties, to show that the determinant of a torsion sheaf is effective,  is to consider codimension $1$ points, in which case one is reduced to modules over a PID; from this it is clear that on a variety $U$, the determinant of a torsion sheaf $\mathcal Q$ is, as a Cartier divisor,  the sum over the codimension $1$ points weighted by the length of $\mathcal Q$  at each codimension $1$ point.  One then extends the associated line bundle (uniquely) over codimension $2$ loci.  As this construction  is functorial,   
the pull-back $p^*(\det (\mathcal E)\otimes (\det(\mathcal F))^{-1})$ is the line bundle associated to an effective divisor $D$ on $U$ with descent data, completing the proof. 
\end{proof}

\subsubsection{First Chern classes of coherent sheaves}\label{S:ChernClXX}
We use the definition of the Chow groups of $\mathcal X$ and intersection theory from \cite{vistoli89}; we denote the Chow groups by $\operatorname{CH}_i(\mathcal X)$ when we refer to dimension and $\operatorname{CH}^i(\mathcal X)$ when we refer to codimension.  When considering intersections, we work in the rational Chow group.   From  \cite{vistoli89} it seems to be well known that one has the notion of Chern classes of vector bundles on $\mathcal X$.
However, as there does not seem to be a standard reference for this, and we only need first Chern classes of line bundles, we only sketch the  special case of line bundles here.  

Following the presentation in \cite[\S 2.5]{fulton}, in the context of \cite{vistoli89}, one finds that for a line bundle $\mathcal L$ on $\mathcal X$, one has for $i\ge 1$ a well-defined homomorphism
\begin{equation}\label{E:c1LL}
c_1(\mathcal L)\cap (-):\operatorname{CH}_i(\mathcal X)\to \operatorname{CH}_{i-1}(\mathcal X).
\end{equation}
In fact, one obtains the full statement of  \cite[Prop.~2.5]{fulton} in the context of line bundles on $\mathcal X$ and classes in the Chow group of $\mathcal X$. i.e., commutativity, the projection formula, etc.

Note also that after making the canonical identification  $\operatorname{CH}_0(\operatorname{Spec}\mathbb C)=\mathbb Z$, we denote by $$\deg:\operatorname{CH}_0(\mathcal X)\to \mathbb Z$$ the push-forward map to $\operatorname{Spec}\mathbb C$.  Then, for a line bundle $\mathcal L$ on $\mathcal X$ and a curve class $\alpha\in \operatorname{CH}_1(\mathcal X)$, we use the notation
\begin{equation}\label{E:c1dot}
c_1(\mathcal L)\cdot \alpha := \deg (c_1(\mathcal L)\cap \alpha).
\end{equation}

For a coherent sheaf $\mathcal F$ on $\mathcal X$ that admits a surjection from a torsion-free coherent sheaf, we define
\begin{equation}\label{E:c1FF}
c_1(\mathcal F):=c_1(\det \mathcal F).
\end{equation}

\begin{rem}
We will sometimes, for brevity, write $c_1(\mathcal L)$ for the cycle class $c_1(\mathcal L)\cap [\mathcal X]$ in $\operatorname{CH}^1(\mathcal X)$.
\end{rem}

\subsection{Log resolution of singularities for DM stacks}

For lack of a convenient reference for the full statement we would like to use, we include the following well-known result (see e.g., \cite[Rem.~3.1]{KTbir23}, as well as the recent preprint \cite{AbSQTW}) regarding resolution of singularities for DM stacks:

\begin{teo}[Hironaka's embedded resolution of singularities]\label{T:Hironaka}
Let $\mathcal X$ be an integral separated DM stack of finite type over $\mathbb C$, and let $\mathcal D\subseteq \mathcal X$ be an effective Cartier divisor on $\mathcal X$.  Then there exists a schematic projective morphism 
$$
\mu:\mathcal X'\to \mathcal X
$$ 
such that:
\begin{enumerate}[label=(\alph*)]
\item \label{E:Hir-1}$\mathcal X'$ is smooth, the exceptional locus  $\mathcal E$ of $\mu$ is a  divisor, and  $\mu^{*}\mathcal D+ \mathcal E$ is a simple normal crossing divisor.

\item \label{E:Hir-2} $\mu$ is an isomorphism over $\mathcal X-(\operatorname{Sing}\mathcal X\cup \operatorname{Sing}(\mathcal D))$. 

\end{enumerate}
Moreover,  if $\mathcal X$ is normal, the natural morphism $\mathcal O_{\mathcal X}\to \mu_*\mathcal O_{\mathcal X'}$ is an isomorphism, and if $\mathcal X$ has a (quasi-) projective coarse moduli space, then so does $\mathcal X'$.
%%%%%%%%%%%%%%%%%%%%%%%%%%%%%%%%%%%%%%%%%%%%%%%%%%%%%%%%%%%
\ifArxiv
\else 
\qed
\fi
%%%%%%%%%%%%%%%%%%%%%%%%%%%%%%%%%%%%%%%%%%%%%%%%%%%%%%%%%%%
\end{teo}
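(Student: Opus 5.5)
The plan is to deduce the statement from the functorial (canonical) resolution of singularities for schemes. Applying it étale locally on $\mathcal X$, the resulting local resolutions should glue to a resolution of the stack.

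\textbf{Step 1: a functorial algorithm for pairs.} Invoke the functorial embedded resolution of Bierstone--Milman, Villamayor, or Włodarczyk (as in Kollár's \emph{Lectures on resolution of singularities}, Thm.~3.36 and Prop.~3.37). It applies to pairs $(U,D)$, with $U$ a variety over $\mathbb C$ and $D$ an effective Cartier divisor. It produces a projective morphism $\mu_U:U'\to U$, obtained as a sequence of blow-ups of smooth centers, with the following properties.
\begin{enumerate}[label=(\roman*)]
\item $U'$ is smooth and the exceptional locus is a divisor.
\item $\mu_U^*D+\operatorname{Exc}$ is simple normal crossing.
\item $\mu_U$ is an isomorphism over $U\setminus(\operatorname{Sing}U\cup\operatorname{Sing}D)$.
\item The construction commutes with smooth, and in particular étale, morphisms $U_1\to U_2$, in the sense that $U_1'=U_1\times_{U_2}U_2'$.
\end{enumerate}
Property (iv) is the key input.

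\textbf{Step 2: descent to the stack.} Choose an étale atlas $U\to\mathcal X$ from a scheme, and set $R=U\times_{\mathcal X}U$. Both projections $R\rightrightarrows U$ are étale, so by (iv) the resolutions of $(U,D|_U)$ and of $(R,D|_R)$ are compatible with both projections. The groupoid structure therefore lifts to an étale groupoid $R'\rightrightarrows U'$, and we let $\mathcal X'=[U'/R']$. The map $\mu:\mathcal X'\to\mathcal X$ is then schematic: its base change to $U$ is $U'\to U$. The blow-up centers are canonical, hence descend to closed substacks, so $\mu$ is a composite of blow-ups of $\mathcal X$ along closed substacks and is therefore projective.

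Properties \ref{E:Hir-1} and \ref{E:Hir-2} are étale local, so they follow from (i)--(iii). The normal crossing condition is likewise checked on the atlas. One should use the convention that "simple" normal crossing on a stack means the components are smooth on an étale cover, or else first take the reduced boundary. Separatedness and DM-ness of $\mathcal X'$ follow because $\mu$ is schematic and proper.

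\textbf{Step 3: the remaining assertions.} Suppose $\mathcal X$ is normal. The map $\mu$ is proper, birational and schematic. The statement $\mathcal O_{\mathcal X}\to\mu_*\mathcal O_{\mathcal X'}$ is an isomorphism is étale local, and on $U$ it is Zariski's main theorem for the proper birational map $U'\to U$ onto a normal variety.

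For the coarse space, let $X'$ be the coarse moduli space of $\mathcal X'$. Then $\mu$ induces a proper map $X'\to X$. Since $\mu$ is a sequence of blow-ups along ideals $\mathcal I$, the line bundle $\mathcal O(-E)$ is $\mu$-ample. A power of it descends to a line bundle on $X'$ by \cite[Lem.~2.1.2]{AGV08}. I would aim to show this descended bundle is relatively ample for $X'\to X$.

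I expect this last point to be the main obstacle: descending $\mu$-ampleness to the coarse spaces. I would handle it using the fact that $\pi':\mathcal X'\to X'$ is a proper quasi-finite coarse map. A line bundle on $X'$ whose pullback to $\mathcal X'$ is ample relative to $X$ is itself relatively ample, since ampleness can be tested on curves, or via Nakai--Moishezon, and pulling back by a finite surjective map preserves and detects ampleness. Composing with an ample bundle on $X$ then shows $X'$ is (quasi-)projective.
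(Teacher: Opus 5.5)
Your proposal is correct and follows essentially the paper's route. You run the functorial resolution on a presentation of $\mathcal X$ (you use an \'etale atlas $U$ with $R=U\times_{\mathcal X}U$, the paper a smooth groupoid) and glue the resulting cartesian towers of blow-ups into $\mu:\mathcal X'\to\mathcal X$. You then get quasi-projectivity of the coarse space by showing that a descended power of the exceptional line bundle is relatively ample over $X$, testing on fibres $X'_x$ via the finite surjective map from the fibre $U'_u$ of $U'\to U$ over a lift $u$ of $x$, together with Nakai--Moishezon (positivity on curves alone would not suffice).

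The differences are minor. You get projectivity, and hence schematicity (which base change to the atlas alone does not give), by recognizing each step as a blow-up of the stack along a descended smooth center, i.e.\ a relative $\operatorname{Proj}$; the paper instead checks fibrewise ampleness of $-\mathcal E_{i+1}$ after base change to schemes. In the coarse-space step you should argue one blow-up at a time, as the paper does, or use a suitably weighted exceptional divisor, since for a composite of blow-ups the reduced total exceptional divisor $-E$ need not be $\mu$-ample. You also supply the Zariski main theorem argument for $\mathcal O_{\mathcal X}\cong\mu_*\mathcal O_{\mathcal X'}$, which the paper's proof leaves implicit.
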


\begin{rem}
A similar embedded resolution of singularities holds for closed substacks of higher codimension by first blowing up the stack along the closed substack to obtain a divisor, and then employing \Cref{T:Hironaka}; to keep the statement of the theorem cleaner, we have only stated the theorem for divisors. 
\end{rem}

%%%%%%%%%%%%%%%%%%%%%%%%%%%%%%%%%%%%%%%%%%%%%%%%%%%%%%%%%%%
\ifArxiv
While the \Cref{T:Hironaka} seems to be well-known, since we were not aware of a reference in the literature stating the result as we have above, we have included a proof in \Cref{S:ResSing}.
\fi
%%%%%%%%%%%%%%%%%%%%%%%%%%%%%%%%%%%%%%%%%%%%%%%%%%%%%%%%%%%

\subsection{Resolution to locally free sheaves} Here we explain how a result of Rossi implies that after pull backs to a blow-up, one can replace coherent sheaves on stacks by locally free sheaves: 

\begin{teo}[{\cite[Thm.~3.5]{Rossi68}}]\label{T:Rossi}
Let $\mathcal X$ be a proper integral DM stack over $\mathbb C$ with projective coarse moduli space, and let $\mathcal E$ be a coherent sheaf on $\mathcal X$.  There exists a blow-up $\mu:\mathcal X'\to \mathcal X$ from a smooth proper integral DM stack over $\mathbb C$ with projective coarse moduli space  such that $(\mu^*\mathcal E)^{\vee \vee}$ is locally free.
\end{teo}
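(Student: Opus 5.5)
We deduce the statement from Rossi's theorem for analytic spaces, working on an étale atlas and then descending. Since the objects are coherent sheaves on a DM stack, the cleanest route is to construct $\mathcal X'$ as a blow-up of $\mathcal X$ along a closed substack defined by a canonical, étale-local ideal.

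\textbf{Step 1: Fitting ideals.} Away from a proper closed substack, $\mathcal E$ has constant generic rank $r$. Consider the $r$-th Fitting ideal $\mathcal I:=\operatorname{Fitt}_r(\mathcal E)\subseteq \mathcal O_{\mathcal X}$. Fitting ideals commute with arbitrary base change, in particular with flat pull-back along an étale atlas $p:U\to\mathcal X$. Hence $\mathcal I$ is a well-defined coherent ideal sheaf on $\mathcal X$, nonzero because $\mathcal E$ is locally free of rank $r$ generically.

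\textbf{Step 2: Flattening.} Let $\nu:\mathcal X_1\to\mathcal X$ be the blow-up of $\mathcal I$. This is a schematic projective morphism, blow-ups commute with flat base change, and $\mathcal X_1$ is integral. By Rossi's argument (or the Raynaud--Gruson flattening viewpoint), on $\mathcal X_1$ the quotient $\nu^*\mathcal E/\text{torsion}$ is locally free of rank $r$. This is an étale-local statement on $\mathcal X_1$, so it may be checked on $U\times_{\mathcal X}\mathcal X_1$, which is the blow-up of $U$ along $p^{-1}\mathcal I$, where it is exactly \cite[Thm.~3.5]{Rossi68}. The algebraic version on schemes holds by the same proof, with GAGA for the projective-coarse-space setting if needed.

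\textbf{Step 3: Resolution.} Apply \Cref{T:Hironaka} to obtain $\rho:\mathcal X'\to\mathcal X_1$ with $\mathcal X'$ smooth, and set $\mu=\nu\circ\rho$. Then $\rho^*(\nu^*\mathcal E/\mathrm{tors})$ is locally free, say equal to $\mathcal V$. There is a surjection $\mu^*\mathcal E\to\mathcal V$ whose kernel is torsion, since the two sheaves agree generically. It follows that $(\mu^*\mathcal E)^{\vee\vee}=\mathcal V^{\vee\vee}=\mathcal V$, which is locally free.

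\textbf{Step 4: Projectivity.} The coarse moduli space of $\mathcal X_1$ is projective, because $\nu$ is a projective blow-up: one can take a power of $\mathcal O(-\text{exc})$ twisted by a pull-back of an ample line bundle from $X$, and a suitable power of this descends to the coarse space. \Cref{T:Hironaka} then preserves projectivity of the coarse moduli space. If one insists that $\mu$ itself be a single blow-up, Hironaka's resolution is a sequence of blow-ups, and a composite of projective birational morphisms is a blow-up of some ideal; this also holds on stacks, since the ideal can be chosen canonically.

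The main obstacle is Step 2, namely verifying that Rossi's flattening result, stated for complex spaces, applies étale-locally and descends. The key point is that both the Fitting ideal and the blow-up construction are canonical and compatible with étale base change, so descent is automatic. The remainder of the argument is formal.
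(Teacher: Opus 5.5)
Your proof is correct, and its skeleton matches the paper's. The paper simply asserts that Rossi's argument carries over verbatim to the étale topology; that argument builds an ideal to blow up locally and then checks that the local blow-ups agree on overlaps. You instead take the center to be $\operatorname{Fitt}_r(\mathcal E)$, which is intrinsic and commutes with base change. So the ideal is defined on $\mathcal X$ from the start, and descent is automatic rather than a gluing verification.

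This choice also lets you drop the citation of Rossi and GAGA in Step 2, and GAGA would not apply directly to non-proper étale charts anyway. The local statement is elementary:
\begin{itemize}
\item After blowing up, $\operatorname{Fitt}_r(\nu^*\mathcal E)$ is invertible, hence locally generated by a single $(n-r)$-minor $\delta$ of a presentation matrix.
\item $\operatorname{Fitt}_{r-1}(\nu^*\mathcal E)=0$, because $\operatorname{Fitt}_{r-1}(\mathcal E)$ vanishes generically and hence vanishes on the integral stack $\mathcal X$.
\item Cramer's rule then shows that the saturation of the image of the presentation matrix is a direct summand of rank $n-r$. That is, $\nu^*\mathcal E/\mathrm{tors}$ is locally free of rank $r$.
\end{itemize}

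Your write-up also makes explicit two points that the paper's sketch leaves implicit.
\begin{itemize}
\item A Rossi-type blow-up need not be smooth even when $\mathcal X$ is; blowing up $(x^2,y^2)$ on a smooth surface already gives a singular surface. So the resolution step via \Cref{T:Hironaka} is genuinely needed for the smoothness asserted in the statement. If one insists on a single blow-up, one also needs that composites of blow-ups are blow-ups; your ``canonical ideal'' justification for this on stacks is somewhat hand-wavy but fine at this level.
\item The blow-up naturally yields the locally free quotient $\mu^*\mathcal E/\mathrm{tors}$, whereas the statement is about $(\mu^*\mathcal E)^{\vee\vee}$. Your Step 3 correctly identifies the two by dualizing the surjection onto $\mathcal V$, whose kernel is torsion.
\end{itemize}

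Step 4 is the same relative-ampleness argument the paper uses in its appendix to prove \Cref{T:Hironaka}.
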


\begin{proof}
The proof in 
\cite[Thm.~3.5]{Rossi68}
is done for irreducible analytic spaces.  The proof, essentially  as written,  works in  the algebraic category.   
The proof caries over directly to algebraic stacks as the proof is done first locally, constructing an ideal sheaf to blow-up locally; then it is shown that these all agree on overlaps; adapting this argument to the \'etale topology gives the proof for stacks. 
\end{proof}

\subsection{Galois covers and quotient stacks} Suppose that $q:V\to X$ is a finite flat morphism of smooth projective varieties.  Then it is 
well-known (taking integral closures in extensions of fraction fields, for instance) that there is a normal projective variety $V'$ with a finite morphism $V'\to V$, and a finite group $G$ acting on $V'$ so that the composition $V'\to V\to X$ exhibits $X$ as the quotient $X=V'/G$.  

On the other hand, 
suppose that $q:V\to \mathcal X$ is a finite flat morphism from a smooth projective variety $V$ to a smooth proper integral DM stack $\mathcal X$ over $\mathbb C$ with projective coarse moduli space.  If $q$ is not \'etale, there does not exist a finite cover $V'\to V$ from a projective variety (or even proper DM stack) $V'$ with a finite group $G$ acting on $V'$ so that the composition $V'\to V\to \mathcal X$ exhibits $\mathcal X$ as $[V'/G]$, since  the \'etale morphism $V'\to [V'/G]\cong \mathcal X$ could not factor through the ramified morphism $q$.  
The following version of \cite[Thm.~C]{rydhComact} gives us a way around this issue.  The statement includes the notion of a stacky blow-up, which is a composition of usual blow-ups with root stacks, as well as the notion of an admissible stacky blow-up, which means the blow-up is an isomorphism over the specified locus; we direct the reader to \cite[\S 8, Def.~(9.3), Def.~(3.2)]{rydhComact} for more details.  

\begin{teo}[{\cite[Thm.~C]{rydhComact}}]\label{T:Rydh}
Let $q:V\to \mathcal X$ be a finite flat morphism from a smooth projective variety $V$ to a smooth proper integral DM stack $\mathcal X$ over $\mathbb C$ with projective coarse moduli space.  Let $\mathcal U\subseteq \mathcal X$ be the open substack over which $q$ is \'etale.  There exists a stacky $\mathcal U$-admissible
blow-up $\sigma:{\mathcal X}'\to \mathcal X$ from a smooth proper DM stack $\mathcal X'$ over $\mathbb C$ with projective coarse moduli space, and  a generically finite morphism $\mathcal V'\to V$ from a smooth proper integral  DM stack $\mathcal V'$ over $\mathbb C$ with projective coarse moduli space, which is \'etale over $q^{-1}(\mathcal U)$, and a finite group $G$ acting on $\mathcal V'$ so that there is a commutative diagram
\begin{equation}\label{E:TRydh-dgm}
\xymatrix{
\mathcal V'\ar[r] \ar[d]^{/G}& V \ar[d]^q\\
[\mathcal V'/G]\cong {\mathcal X'}\ar[r]^<>(0.5)\sigma& \mathcal X
}
\end{equation}
\end{teo}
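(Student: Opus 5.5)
Deduce the statement from Rydh's theorem \cite[Thm.~C]{rydhComact} in three steps.

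First, recall the input. Rydh's theorem concerns a finite flat morphism $q:V\to\mathcal X$. Let $\mathcal U\subseteq\mathcal X$ be the locus over which $q$ is \'etale. The theorem gives a $\mathcal U$-admissible stacky blow-up $\sigma:\mathcal X'\to\mathcal X$, meaning a sequence of blow-ups in centers disjoint from $\mathcal U$ and root stacks along divisors disjoint from $\mathcal U$, such that $V\times_{\mathcal X}\mathcal X'$ admits a $G$-torsor dominating it. More precisely, there is a stack $\mathcal V''$ with a $G$-action, an isomorphism $[\mathcal V''/G]\cong\mathcal X'$, and a morphism $\mathcal V''\to V\times_{\mathcal X}\mathcal X'$ over $\mathcal X'$.

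\textbf{Step 1: check the hypotheses and obtain $\sigma$.} The map $q$ is finite flat, and over $\mathcal U$ it is finite \'etale, so $q|_{\mathcal U}$ is a finite \'etale cover. Taking the Galois closure of $q|_{\mathcal U}$ over the integral stack $\mathcal U$ gives a $G$-torsor $\mathcal W\to\mathcal U$ that factors through $q^{-1}(\mathcal U)$. Rydh's theorem extends this torsor to a $G$-torsor $\mathcal V''\to\mathcal X'$ over a $\mathcal U$-admissible stacky blow-up. Since $\mathcal X'\to\mathcal X$ is a composite of blow-ups and root stacks over a stack with projective coarse space, $\mathcal X'$ is proper with projective coarse moduli space.

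\textbf{Step 2: make $\mathcal X'$ smooth.} If $\mathcal X'$ is not smooth, first apply \Cref{T:Hironaka}, choosing the centers so that the resolution is an isomorphism over $\mathcal U$. Then use the functoriality of Rydh's construction to re-run the torsor extension on the resolved stack. Alternatively, resolve $\mathcal V''$ $G$-equivariantly (Hironaka resolution is functorial, hence equivariant) to obtain $\mathcal V'$, and replace $\mathcal X'$ by $[\mathcal V'/G]$. This quotient is again a stacky modification of $\mathcal X$ that is an isomorphism over $\mathcal U$, and it is smooth because $\mathcal V'\to[\mathcal V'/G]$ is \'etale and $\mathcal V'$ is smooth.

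\textbf{Step 3: build $\mathcal V'$ and check its properties.} Define $\mathcal V'$ to be the irreducible component of $\mathcal V''$ mapping onto the chosen component, after the equivariant resolution of Step 2. If $G$ does not act transitively on the components, shrink $G$ to the stabilizer of one component. The following then hold:
\begin{itemize}
\item The composite $\mathcal V'\to V\times_{\mathcal X}\mathcal X'\to V$ is generically finite, because both $\mathcal V'\to\mathcal X'$ and $V\to\mathcal X$ are finite over $\mathcal U$, and $\mathcal U$ is dense.
\item Over $q^{-1}(\mathcal U)$ the composite is $\mathcal W\to q^{-1}(\mathcal U)$, which is finite \'etale.
\item $\mathcal V'$ is proper with projective coarse space, because it is finite over $\mathcal X'$.
\item The square in \eqref{E:TRydh-dgm} commutes by construction.
\end{itemize}

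\textbf{Main obstacle.} The hardest step is Step 2, namely reconciling smoothness of $\mathcal X'$ with the requirement that $\mathcal V'\to\mathcal X'$ be a $G$-torsor. The equivariant-resolution route handles this. The price is that the result is then a modification of $\mathcal X$ rather than literally a stacky blow-up. To address this, I would note that the resolution is itself projective and an isomorphism over $\mathcal U$, so by the flattening and blow-up results of \cite{rydhComact} it can be dominated by, and hence replaced with, a $\mathcal U$-admissible stacky blow-up.
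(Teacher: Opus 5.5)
There is a genuine gap in Step~1: you take as input something \cite[Thm.~C]{rydhComact} does not provide. That theorem is an \emph{\'etalification} result. Applied to $q$, it gives a $\mathcal U$-admissible stacky blow-up $\mu:\widetilde{\mathcal X}\to\mathcal X$ and a $q^{-1}(\mathcal U)$-admissible blow-up $\widetilde{\mathcal V}\to\widetilde{\mathcal X}\times_{\mathcal X}V$ such that $\widetilde{\mathcal V}\to\widetilde{\mathcal X}$ is \'etale. It produces no group $G$, no $G$-action, and no isomorphism with a quotient stack, and those are exactly what the statement asks you to construct. So reading Rydh as ``extending the torsor $\mathcal W\to\mathcal U$ to a $G$-torsor $\mathcal V''\to\mathcal X'$ with a map to $V\times_{\mathcal X}\mathcal X'$'' assumes most of the conclusion. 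The extension also does not follow automatically from \'etalification. If you apply Thm.~C to the normalization of $\mathcal X$ in $\mathcal W$, the modified cover it produces carries no $G$-action a priori. Over a possibly non-normal $\mathcal X'$ you cannot use the canonicity of normalization to force one, nor to extend $\mathcal W\to V$ to a morphism.

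The missing idea is the order of operations in the paper's proof:
\begin{enumerate}
\item \'Etalify $q$ itself.
\item Resolve $\widetilde{\mathcal X}$ by blow-ups centered over its singular locus, which is still a $\mathcal U$-admissible stacky blow-up, and pull back $\widetilde{\mathcal V}$. The pull-back stays \'etale, hence becomes smooth.
\item Only then take the Galois closure of the finite \'etale cover $\widetilde{\mathcal V}\to\widetilde{\mathcal X}$ by Galois-category arguments (as in the proof of \cite[Thm.~(6.1)]{LMB}). This gives $\mathcal V'\to\widetilde{\mathcal V}\to V$ with $\widetilde{\mathcal X}\cong[\mathcal V'/G]$, and $\mathcal V'$ is smooth because it is \'etale over $\widetilde{\mathcal X}$.
\end{enumerate}
In this order your Step~2 ``obstacle'' never arises. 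Even in your order it is illusory: a torsor simply pulls back along a resolution of $\mathcal X'$, and that resolution is itself a composite of blow-ups away from $\sigma^{-1}(\mathcal U)$, so the composite is still a $\mathcal U$-admissible stacky blow-up. No re-running of Rydh's construction and no equivariant resolution is needed. Your proposed remedy at the end is actually harmful. Replacing the resolved stack by a dominating stacky blow-up obtained from flattening gives up the smoothness you just achieved, which makes the argument circular.
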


\begin{proof}
Applying \cite[Thm.~C]{rydhComact} as stated, one obtains a diagram
$$
\xymatrix{
\widetilde {\mathcal V}\ar[r] \ar[d]^{\text{\'et}}& V \ar[d]^q\\
 \widetilde {\mathcal X}\ar[r]^<>(0.5)\mu& \mathcal X
}
$$
where $\mu$ is a $\mathcal U$-admissible stacky blow-up (see \cite[Def.~(9.3)]{rydhComact}),  
the morphism $\widetilde {\mathcal V}\to \widetilde {\mathcal X}\times_{\mathcal X}V$ is a $q^{-1}(\mathcal U)$-admissible blow-up   (see \cite[\S 4]{rydhComact}), 
and $\widetilde {\mathcal V}\to \widetilde {\mathcal X}$ is \'etale.  Taking a strong resolution of singularities, one can assume that $\widetilde {\mathcal X}$ is smooth (and therefore that $\widetilde {\mathcal V}$ is smooth).  The key take-aways are the following.  
The morphism $\mu$ is an isomorphism over 
$\mathcal U$, and  $\widetilde {\mathcal V}\to V$ is an isomorphism over 
$q^{-1}(\mathcal U)$.

Now, given the \'etale morphism $\widetilde {\mathcal V}\to \widetilde {\mathcal X}$, the usual Galois category type arguments show that one then gets a proper integral DM stack $\mathcal V'$ with projective coarse moduli space and a morphism   $\mathcal V'\to \widetilde {\mathcal V}$ so that the composition $\mathcal V'\to \widetilde {\mathcal V}\to \widetilde {\mathcal X}$ has  Galois group $G$ over $\widetilde {\mathcal X}$, so that $\widetilde {\mathcal X}\cong [\mathcal V'/G]$.  This is explained in the proof of \cite[Thm.~(6.1)]{LMB} using fibered products, and a similar argument is given in the proof of \cite[Lem.~(13.4)]{rydhComact}.  Alternatively, one can use the notion of a Galois category, as in SGA1.~Exp.~V, Sec.~4, Axioms G1--G6  (see also \cite[\href{https://stacks.math.columbia.edu/tag/0BMQ}{\S  0BMQ}]{stacks-project}).  Note that, as the morphism $\mathcal V'\to [\mathcal V'/G]$ is \'etale and the quotient $[\mathcal V'/G]\cong \widetilde {\mathcal X}$ is smooth, we have that $\mathcal V'$ is smooth.
\end{proof}

\begin{rem} Let $\mathcal X$ be a normal DM stack. For a morphism $\mu:\widetilde {\mathcal X}\to \mathcal X$ that is  a composition of blow-ups and root stacks,  the natural morphism $\mathcal O_{\mathcal X}\to \mu_*\mathcal O_{\widetilde {\mathcal X}}$ is an isomorphism.  Indeed, for blow-ups, if the base is normal, the push-forward of the structure sheaf is the structure sheaf. Given a root stack $\rho:\widetilde {\mathcal X}\to \mathcal X$, we also  have that the natural morphism $\mathcal O_{\mathcal X}\to \rho_*\mathcal O_{\widetilde {\mathcal X}}$ is an isomorphism. Indeed, this is a local question, 
and so just comes down to the general statement about quotient stacks by finite group, as the root stack ${\widetilde {\mathcal X}}$ is locally modeled on $\left[\left(\underline{\operatorname{Spec}}_{\mathcal X}\mathcal O_{\mathcal X} [t]/(t^r-s)\right)/\mu_r\right]\to \mathcal X$.  

\end{rem}

\begin{rem}\label{R:VBdescentRS}
For a root stack $\rho:\widetilde {\mathcal X}\to \mathcal X$, we note that given a  vector bundle $\widetilde {\mathcal E}$ on $\widetilde {\mathcal X}$, if the action of the stabilizer on each fiber of $\widetilde {\mathcal E}$ is trivial, then the natural morphism $\pi^*\pi_*\widetilde {\mathcal E}\to \widetilde {\mathcal E}$ is an isomorphism.  
 This follows from standard arguments about vector bundles on quotients of DM stacks  (see e.g., the argument of \cite[Thm.~3.1.1(1)]{chuck_07} regarding line bundles on root stacks, or the argument of \cite[Lem.~2.1.2]{AGV08} regarding line bundles on coarse moduli spaces of DM stacks, or  the argument of \cite[Thm.~10.3]{alper13} regarding vector bundles on stacks with good moduli spaces). 
\end{rem}

\section{Movable curve classes on stacks}\label{S:mov-class-XX}

Here we define the notion of  movable classes of curves on stacks.  Our presentation closely follows the presentation in 
\cite{GKP16} for projective varieties.
Let $\mathcal X$ be a normal proper $\mathbb Q$-factorial DM stack over $\mathbb C$ with projective coarse moduli space $\pi:\mathcal X\to X$.
Under these hypotheses,  $X$ is also normal, projective, and $\mathbb Q$-factorial.
In this section we are weakening the hypothesis on $\mathcal X$ (elsewhere we always assume that $\mathcal X$ is smooth) simply for convenience, so that we can address the stack $\mathcal X$ and the coarse moduli space $X$ simultaneously in the discussion.  The reader who wishes may assume that $\mathcal X$ is smooth, and then repeat the proofs for the coarse moduli space $X$.    We use the Chow groups and intersection theory developed for stacks in \cite{vistoli89}; 
recall that for all $i$ we have that the push-forward morphism  $\pi_*:\operatorname{CH}_i(\mathcal X)_{\mathbb Q}\stackrel{\sim}{\to}\operatorname{CH}_i(X)_{\mathbb Q}$ is an isomorphism \cite[Prop.~(6.1(i))]{vistoli89}.

The starting point for our discussion is the non-degenerate bilinear intersection pairing
\begin{equation}\label{E:BilIntPairXX}
\xymatrix{
\operatorname{N}^1(\mathcal X)_{\mathbb R}\times \operatorname{N}_1(\mathcal X)_{\mathbb R} \ar[r]^<>(0.5){(-)\cdot(-)} \ar[d]_{\pi_*\times \pi_*}^\cong&  \mathbb R \ar@{=}[d]\\\operatorname{N}^1( X)_{\mathbb R}\times \operatorname{N}_1(X)_{\mathbb R} \ar[r]^<>(0.5){(-)\cdot(-)}&  \mathbb R.
}
\end{equation}
To be precise, and to connect this with our discussion of first Chern classes on DM stacks, we recall a definition of this intersection product.  We identify $\operatorname{CH}^1(\mathcal X)_{\mathbb Q}=\operatorname{Pic}^1(\mathcal X)_{\mathbb Q}$ via the $\mathbb Q$-factorial hypothesis.  Then for a Cartier divisor $\mathcal D$ on $\mathcal X$ and a class $\alpha\in 
\operatorname{CH}_1(\mathcal X)$, we define $\mathcal D\cdot \alpha$ as $\deg (c_1(\mathcal O_V(\mathcal D))\cap \alpha)$, where degree represents the push-forward to a point (and we have identified the Chow group of the point with $\mathbb Z$).  We then extend this pairing $\mathbb Q$-linearly in each entry, and therefore get definitions also for Weil divisor classes.  We can then extend $\mathbb R$-linearly to get the map above.  
From the definitions, applying the same to $X$, one can check that the diagram \eqref{E:BilIntPairXX} above is commutative.

\begin{dfn}[Movable class]\label{D:MoveClass}
A class $\alpha\in \operatorname{N}_1(\mathcal X)_{\mathbb R}$ is called \emph{pseudo-movable}, or simply \emph{movable}, if we have  
$\mathcal D\cdot \alpha \ge 0$ for every pseudo-effective divisor class $\mathcal D\in \operatorname{N}^1(\mathcal X)_{\mathbb R}$.  
\end{dfn}

We are using the definition of pseudo-effective divisors from \cite[Def.~2.4]{CMZpositivity}; 
i.e., a divisor class $\mathcal D$ is pseudo-effective on $\mathcal X$ if some positive multiple is a $\mathbb Z$-Cartier divisor that descends to a pseudo-effective $\mathbb Z$-Cartier divisor on the coarse moduli space $X$.
    The pseudo-effective  divisor classes define a cone, equal to the cone obtained by taking the pre-image of the pseudo-effective cone on $X$ under the isomorphism   $\pi_*:\operatorname{CH}_{\dim X-1}(\mathcal X)_{\mathbb R}\stackrel{\sim}{\to}\operatorname{CH}_{\dim X-1}(X)_{\mathbb R}$. 
  The set of movable  classes form a closed convex cone in $ \operatorname{N}_1(\mathcal X)_{\mathbb R}$ that we will call the \emph{(pseudo-)movable cone}; it is, by definition, the dual cone to the pseudo-effective cone.

\begin{rem}
  As the pseudo-effective cone on $X$ is the closure of the cone generated by effective divisors, then using the isomorphism  $\pi_*:\operatorname{CH}_{\dim X-1}(\mathcal X)_{\mathbb R}\stackrel{\sim}{\to}\operatorname{CH}_{\dim X-1} (X)_{\mathbb R}$, one has that the pseudo-effective cone on $\mathcal X$ is the closure of the cone generated by effective divisors on $\mathcal X$.
  Consequently,  a class  $\alpha\in \operatorname{N}_1(\mathcal X)_{\mathbb R}$ is movable on $\mathcal X$ if and only if  $\mathcal D\cdot \alpha \ge 0$ for every effective $\mathbb Z$-divisor $\mathcal D$ on $\mathcal X$. 
Finally, it is clear from the definition of movable classes and \eqref{E:BilIntPairXX} that the isomorphism $\pi_*: \operatorname{N}_1(\mathcal X)_{\mathbb R}\stackrel{\sim}{\to} \operatorname{N}_1(X)_{\mathbb R}$ takes the cone of movable classes on $\mathcal X$ isomorphically to the cone of movable classes on $X$.  
\end{rem}

Given a morphism $q':\mathcal X'\to \mathcal X$ of smooth proper DM stacks over $\mathbb C$ with projective coarse moduli spaces $\pi':\mathcal X'\to X'$ and $\pi:\mathcal X\to X$ respectively, and associated morphism $\psi':X'\to X$, 
the push-forward map of divisors respects $\mathbb R$-linear and numerical equivalence, and therefore, there is a commutative diagram of $\mathbb R$-linear maps
 $$
\xymatrix@C=2em@R=1.5em{
\operatorname{N}^1(\mathcal X')_{\mathbb R}\ar[r]^{q'_*} \ar[d]_{\pi'_*}^\cong& \operatorname{N}^1(\mathcal X)_{\mathbb R} \ar[d]_{\pi_*}^\cong\\
\operatorname{N}^1( X')_{\mathbb R}\ar[r]^{\psi'_*}& \operatorname{N}^1(X)_{\mathbb R}.\\
}$$

Considering duals, and the pairing \eqref{E:BilIntPairXX}, 
we obtain a commutative diagram
\begin{equation}\label{E:num-pullXX}
\xymatrix@R=1em{
&\operatorname{N}^1(X')^\vee_{\mathbb R} \ar@{-}[d]^{\cong} \ar[ld]_{\pi'^\vee_*}^\cong&& \operatorname{N}^1(X)^\vee_{\mathbb R} \ar@{->}[ll] _{\psi'^\vee_*} \ar[ld]_{\pi^\vee_*}^\cong\\
\operatorname{N}^1(\mathcal X')^\vee_{\mathbb R} \ar[dd]^{\cong}&\ar[d]& \operatorname{N}^1(\mathcal X)^\vee_{\mathbb R} \ar@{->}[ll] _<>(0.25){q'^\vee_*}&\\
&\operatorname{N}_1( X')_{\mathbb R}\ar@{<--}[rr]^<>(0.75){\psi'^*} && \operatorname{N}_1(X)_{\mathbb R} \ar[uu]^{\cong}\\
\operatorname{N}_1(\mathcal X')_{\mathbb R}\ar@{<--}[rr] ^{q'^*} \ar[ru]^<>(0.6){\pi'_*}_<>(0.8)\cong && \operatorname{N}_1(\mathcal X)_{\mathbb R} \ar[uu]^<>(0.25){\cong} \ar[ru]_{\pi_*}^\cong&\\
}
\end{equation}

where $q'^*$ and $\psi'^*$  are defined to make the diagram commute.  The definition of $q'^*$ can be interpreted via the projection formula: 
\begin{align}\label{E:projformnumpull}
\mathcal D'\cdot q'^*(\alpha) &= q'_*\mathcal D'\cdot \alpha \ \ \ \text{for all $\alpha\in \operatorname{N}_1(\mathcal X)_{\mathbb R}$ and all $\mathcal D'\in \operatorname{N}^1(\mathcal X')_{\mathbb R}$.}
\end{align}
We will call $q'^*$ 
 the \emph{numerical pull-back}.

\begin{rem}\label{R:Fpb=Npb}
Note that if $q'$  is flat, then the numerical pull-back agrees with the flat pull-back; indeed, the flat pull-back satisfies the same projection formula \eqref{E:projformnumpull}.    
In addition, from the definition, it is clear that the numerical pull-back is functorial.
\end{rem}

\begin{rem}\label{R:EdotMov0}
We observe that $\mathcal D'\cdot q'^*\alpha=0$ for any $\mathcal D'$ such that $q'_*\mathcal D'=0$. In particular, if $\mathcal D'\subseteq \mathcal X'$ is an irreducible effective divisor contracted by $q'$,  then $\mathcal D'\cdot q'^*\alpha=0$.  The same statement holds for divisors and curve classes on $X'$. 
\end{rem}

\begin{lem}\label{L:mov-pullXX} 
Let $q':\mathcal X'\to \mathcal X$ be a morphism of irreducible normal proper $\mathbb Q$-factorial  DM stacks over $\mathbb C$ with projective coarse moduli spaces.
\begin{enumerate}[label=(\arabic*)]

\item \label{L:mov-pullXX1} If $\alpha'\in \operatorname{N}_1(\mathcal X')_{\mathbb R}$ is movable, then $q'_*\alpha'\in \operatorname{N}_1(\mathcal X)_{\mathbb R}$ is  movable.

\item \label{L:mov-pullXX2}  If   $\alpha\in \operatorname{N}_1(\mathcal X)_{\mathbb R}$ is movable, then  $q'^*\alpha\in \operatorname{N}_1(\mathcal X')_{\mathbb R}$ is  movable. If $q'$ is generically finite and surjective, then the converse holds.

\item \label{L:mov-pullXX3}  If $q'$ is finite and flat, and  $\alpha\in \operatorname{N}_1(\mathcal X)_{\mathbb R}$ is movable, then there is a movable class $\alpha'\in \operatorname{N}_1( \mathcal X')_{\mathbb R}$  such that $\alpha = q'_*\alpha'$.

\end{enumerate}
\end{lem}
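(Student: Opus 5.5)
The plan is to prove all three parts by duality, using the projection formula \eqref{E:projformnumpull} together with two facts about divisors: push-forward preserves pseudo-effectivity, and pull-back under a surjective morphism preserves pseudo-effectivity. Where convenient, I will pass to the coarse moduli spaces via the commutative diagrams \eqref{E:BilIntPairXX} and \eqref{E:num-pullXX}; this is legitimate because $\pi_*$ identifies the pseudo-effective and movable cones on $\mathcal X$ with those on $X$. I will also use that pseudo-effective divisor classes are limits of effective ones, as in the Remark after \Cref{D:MoveClass}.

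For \ref{L:mov-pullXX2}, let $\mathcal D'$ be an effective $\mathbb Z$-divisor on $\mathcal X'$. Then $q'_*\mathcal D'$ is effective, so $\mathcal D'\cdot q'^*\alpha=q'_*\mathcal D'\cdot\alpha\ge 0$, and $q'^*\alpha$ is movable by the Remark. For the converse, assume $q'$ is generically finite and surjective of degree $d$. For an effective Cartier divisor $\mathcal D$ on $\mathcal X$ (using $\mathbb Q$-factoriality to reduce to Cartier multiples), the pull-back $q'^*\mathcal D$ is effective, because $q'$ is surjective and $\mathcal X'$ is integral. The projection formula gives $q'_*q'^*\mathcal D=d\,\mathcal D$, where $d$ is the rational degree in the sense of \cite{vistoli89}. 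Combining these with \eqref{E:projformnumpull},
$$
d\,(\mathcal D\cdot\alpha)=q'_*(q'^*\mathcal D)\cdot\alpha=q'^*\mathcal D\cdot q'^*\alpha\ge 0,
$$
so $\alpha$ is movable.

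For \ref{L:mov-pullXX1}, I will again test against effective Cartier multiples $\mathcal D$ on $\mathcal X$ and use $q'_*\alpha'\cdot\mathcal D=\alpha'\cdot q'^*\mathcal D$. This is the projection formula for $c_1(\mathcal O(\mathcal D))\cap(-)$ from the stacky version of \cite[Prop.~2.5]{fulton}, as recalled in \S\ref{S:ChernClXX}. The key point is that $q'^*\mathcal D$ is again effective, which requires that the image of $q'$ not lie in the support of $\mathcal D$. So I would add the hypothesis that $q'$ is surjective (equivalently dominant, since everything is proper), which holds in all intended applications. The statement genuinely needs this: the push-forward of the fundamental class of a negative curve on a surface is not movable, although every nonnegative class on the curve itself is movable. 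I expect this to be the main point requiring care, namely pinning down the surjectivity hypothesis and checking that pull-backs of effective $\mathbb Q$-Cartier divisors under a dominant map from an integral stack are effective. I would verify the latter étale locally, where it reduces to the corresponding fact for local equations on schemes.

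For \ref{L:mov-pullXX3}, let $q'$ be finite flat of degree $d$. I would set $\alpha':=\tfrac1d q'^*\alpha$. By \Cref{R:Fpb=Npb} the numerical pull-back agrees with the flat pull-back, and for finite flat $q'$ one has $q'_*q'^*\alpha=d\,\alpha$ in the rational Chow groups \cite{vistoli89}; hence $q'_*\alpha'=\alpha$. The class $\alpha'$ is movable by \ref{L:mov-pullXX2}.
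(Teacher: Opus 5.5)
Your proposal is correct and follows the paper's proof essentially verbatim. For (1) and (2) it tests the projection formula \eqref{E:projformnumpull} against effective divisors, for the converse in (2) it uses $q'_*q'^*\mathcal D=d\,\mathcal D$, and for (3) it takes $\alpha'=\tfrac1d q'^*\alpha$ (the paper says $q'_*q'^*\alpha$ is a positive multiple of $\alpha$).

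Your caveat on (1) is also right. The paper's one-line argument uses $q'^*\mathcal D\cdot\alpha'\ge 0$, which tacitly requires $q'^*\mathcal D$ to be pseudo-effective and hence $q'$ to be dominant. Your negative-curve example shows (1) fails without that hypothesis, and the failure persists even when the dimensions agree, e.g. for $C\times\mathbb P^1\to C\subset S$.
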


\begin{proof}
Considering coarse moduli spaces, this all reduces to the case of morphisms of normal projective $\mathbb Q$-factorial varieties, which is well-known.  The proofs are identical in the case of stacks; for convenience, we include the proofs here.

  \ref{L:mov-pullXX1}
Suppose that $\alpha'$ is movable.  Then for every effective divisor $\mathcal D$ on $\mathcal X$ we have ${\mathcal D}\cdot q'_*\alpha' = q'^*{\mathcal D}\cdot \alpha'\ge 0$.

\ref{L:mov-pullXX2} Suppose that $\alpha$ is movable.  Then for every effective divisor ${\mathcal D}'$ on ${\mathcal X}'$ we have ${\mathcal D}'\cdot q'^*\alpha = q'_*{\mathcal D}'\cdot \alpha \ge 0$.  Conversely, suppose that  $q'^*\alpha$ is movable.  Given an effective divisor ${\mathcal D}$ on ${\mathcal X}$, by projection formula, since $q'$ is generically finite and surjective, $\mathcal D =  \lambda q'_* q'^*\mathcal{D}$ for some positive rational number $\lambda$. So, for every effective divisor ${\mathcal D}$ on ${\mathcal X}$  we have ${\mathcal D}\cdot \alpha = \lambda q'_*q'^*{\mathcal D}\cdot \alpha = \lambda q'^*{\mathcal D}\cdot q'^*\alpha \ge 0$.  

\ref{L:mov-pullXX3} Suppose that $\alpha$ is movable.  Then from \ref{L:mov-pullXX2} we have that $q'^*\alpha$ is movable.   Then, with the assumption that $q'$ is finite and flat, we have that $q'_*(q'^*\alpha)$ is a positive multiple of $\alpha$.  
\end{proof}

\section{$\alpha$-slope stability on DM stacks}
 The proof of \cite[Thm.~7.6]{CPFol19} given in \cite[Thm.~1]{Schnell17Epi}, which we aim to extend to DM stacks,    depends on some results from \cite[\S 2]{GKP16} regarding stability with respect to movable classes of curves.  In this subsection, we generalize these results to DM stacks.  
Throughout this section we work on a smooth proper integral DM stack $\mathcal X$ over $\mathbb C$ with projective coarse moduli space.

\subsection{Preliminaries on slope}\label{S:SlopeStab}
Recall that the definition of a movable class of curves $\alpha$ on $\mathcal X$ is given in \S\ref{S:mov-class-XX}.  
As with any $1$-cycle, we can define the associated \emph{$\alpha$-slope} of a torsion-free coherent sheaf $\mathcal E$ of positive rank on $\mathcal X$ as
\begin{equation}\label{E:Defmmualpha}
\mu_\alpha(\mathcal E):=\frac{c_1(\mathcal E)\cdot \alpha}{\operatorname{rk}(\mathcal E)}.
\end{equation}
We say that $\mathcal E$ is \emph{$\alpha$-semi-stable} if $\mu_\alpha(\mathcal F)\le \mu_\alpha(\mathcal E)$ for any coherent subsheaf $0\subsetneq \mathcal F\subseteq \mathcal E$.  We say that  $\mathcal E$ is \emph{$\alpha$-stable} if $\mu_\alpha(\mathcal F)<\mu_\alpha(\mathcal E)$ for an coherent subsheaf $0\subsetneq \mathcal F\subsetneq \mathcal E$ \emph{such that} $\operatorname{rk}(\mathcal F)<\operatorname{rk}(\mathcal E)$.  

The following notion is convenient for working with $\alpha$-semi-stability:  
  For a  torsion-free coherent sheaf $\mathcal E$ of positive rank on $\mathcal X$, we define
$$
\mu_\alpha^{\max}(\mathcal E):=\sup \{\mu_\alpha(\mathcal F) : 0\ne \mathcal F\subseteq \mathcal E \text{ a coherent subsheaf }\}.
$$
  Clearly $\m{E}$ is $\alpha$-semistable if and only if $\mu_{\alpha}^{\max}(\m{E})=\mu_{\alpha}(\mathcal E)$.

Working with slopes, we will frequently use the following elementary arithmetic regarding real numbers $a_1,a_2,b_1,b_2$, with $b_1,b_2>0$:
\begin{equation}\label{E:SlpArith}
\operatorname{min}\{\frac{a_1}{b_1},\frac{a_2}{b_2}\}\leq \frac{a_1+a_2}{b_1+b_2}\leq \operatorname{max} \{\frac{a_1}{b_1},\frac{a_2}{b_2}\}
\end{equation}
and if equality holds on one side, then equality holds on both sides.

\begin{lem}\label{L:SlpIncSlpInc}
    If $0\ra \m{F} \ra \m{E} \ra \m{Q} \ra 0$ is a short exact sequence of non-zero torsion-free coherent sheaves, then the following are equivalent: 
    \begin{align*}
        \mu_{\alpha}(\m{F})\leq \mu_{\alpha}(\m{E}) \\
        \mu_{\alpha}(\m{E})\leq \mu_{\alpha}(\m{Q}).
    \end{align*}
   Moreover,  one is an equality if and only if the other one is, too. 
\end{lem}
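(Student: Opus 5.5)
The plan is to reduce to the arithmetic inequality \eqref{E:SlpArith}, using additivity of rank and of $c_1$ in short exact sequences.

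First, ranks are additive: $\operatorname{rk}\mathcal E=\operatorname{rk}\mathcal F+\operatorname{rk}\mathcal Q$. This can be checked at the generic point of $\mathcal X$, or after pulling back to an étale atlas where it is the usual statement for modules over a field. All three ranks are positive, since each sheaf is non-zero and torsion-free.

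Second, determinants are multiplicative. Since all three sheaves are torsion-free, the determinant discussion preceding \Cref{S:DetCoh} gives
\[
\det\mathcal E\cong\det\mathcal F\otimes\det\mathcal Q .
\]
Taking $c_1$ and using that $c_1(-)\cap\alpha$ is additive on tensor products of line bundles (the analogue of \cite[Prop.~2.5]{fulton}, as recalled in \Cref{S:ChernClXX}), we get
\[
c_1(\mathcal E)\cdot\alpha=c_1(\mathcal F)\cdot\alpha+c_1(\mathcal Q)\cdot\alpha .
\]

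Third, set $a_1=c_1(\mathcal F)\cdot\alpha$, $b_1=\operatorname{rk}\mathcal F$, $a_2=c_1(\mathcal Q)\cdot\alpha$ and $b_2=\operatorname{rk}\mathcal Q$. Then
\[
\mu_\alpha(\mathcal F)=\frac{a_1}{b_1},\qquad \mu_\alpha(\mathcal Q)=\frac{a_2}{b_2},\qquad \mu_\alpha(\mathcal E)=\frac{a_1+a_2}{b_1+b_2}.
\]
Suppose $\mu_\alpha(\mathcal F)\le\mu_\alpha(\mathcal E)$. If $a_1/b_1>a_2/b_2$, then \eqref{E:SlpArith} gives $\mu_\alpha(\mathcal E)\le a_1/b_1$, and $\mu_\alpha(\mathcal E)=a_1/b_1=\max$ would force equality on both sides, i.e. $a_1/b_1=a_2/b_2$, a contradiction. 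So $\mu_\alpha(\mathcal E)<\mu_\alpha(\mathcal F)$, contrary to assumption. Hence $a_1/b_1\le a_2/b_2$, and then \eqref{E:SlpArith} gives
\[
\mu_\alpha(\mathcal F)\le\mu_\alpha(\mathcal E)\le\mu_\alpha(\mathcal Q).
\]
The converse is symmetric.

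A cleaner way to see both directions at once is the identity $(b_1+b_2)\bigl(\mu_\alpha(\mathcal E)-\mu_\alpha(\mathcal F)\bigr)=b_2\bigl(\mu_\alpha(\mathcal Q)-\mu_\alpha(\mathcal F)\bigr)$, which together with the analogous identity for $\mathcal Q$ shows that the two differences have the same sign and vanish simultaneously. This also gives the equality statement.

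The only point needing care is the additivity of $c_1$ on the stack. That is, one needs that $\det$ is multiplicative (already established above \Cref{S:DetCoh}) and that $c_1(\mathcal L\otimes\mathcal M)\cap\alpha=c_1(\mathcal L)\cap\alpha+c_1(\mathcal M)\cap\alpha$ in Vistoli's theory. The latter is part of the Fulton-type properties quoted in \Cref{S:ChernClXX}, so it can be cited. Everything else is arithmetic.
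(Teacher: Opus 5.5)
Your proposal is correct and follows the paper's proof: additivity of rank and of $c_1$ (via multiplicativity of $\det$ on short exact sequences of torsion-free sheaves), then the elementary inequality \eqref{E:SlpArith}. Your explicit identity $(b_1+b_2)\bigl(\mu_\alpha(\mathcal E)-\mu_\alpha(\mathcal F)\bigr)=b_2\bigl(\mu_\alpha(\mathcal Q)-\mu_\alpha(\mathcal F)\bigr)$, together with its analogue for $\mathcal Q$, is a clean way to make both the equivalence and the equality clause transparent.
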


\begin{proof}
    Use that  $c_1(\m{E})=c_1(\m{F})+c_1(\m{Q})$ and $\operatorname{rk}(\m{E})=\operatorname{rk}(\m{F})+\operatorname{rk}(\mathcal Q)$, and then apply \eqref{E:SlpArith}.    
\end{proof}

Using \Cref{L:SlpIncSlpInc} we see that 
 $\mathcal E$ is \emph{$\alpha$-semi-stable} if and only if  $\mu_\alpha(\mathcal E)\le \mu_\alpha(\mathcal Q)$ for any torsion-free quotient $\mathcal E\to \mathcal Q\to 0$.
 We similarly define 
    $$\mu_{\alpha}^{\min} (\m{E}):=\inf \{\mu_{\alpha}(\m{Q}) :   \m{E}\ra \m{Q}\ra 0 \,\, \textrm{torsion-free quotient}\},$$
 and we see that $\mathcal E$ is $\alpha$-semi-stable if and only if $\mu_\alpha^{min}(\mathcal E)=\mu_\alpha(\mathcal E)$.  
 
Summarizing the discussion above, we give a generalization of \cite[Pro.~2.16]{GKP16}:

\begin{pro}

\label{P:GKP16-2.16}
  If  $\gamma:\mathcal E\to \mathcal E''$ is a nonzero morphism of  torsion-free coherent sheaves of positive rank,  and $\mathcal E$ is  $\alpha$-semistable, then $\mu_{\alpha}(\mathcal E)\le \mu_\alpha(\operatorname{Im}\gamma)$.
\end{pro}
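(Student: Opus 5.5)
The plan is to factor $\gamma$ through its image and apply the quotient characterization of semi-stability obtained from \Cref{L:SlpIncSlpInc}. Set $\mathcal Q:=\operatorname{Im}\gamma\subseteq \mathcal E''$. Since $\mathcal E''$ is torsion-free, so is its subsheaf $\mathcal Q$. Since $\gamma\neq 0$, $\mathcal Q$ is non-zero, and a non-zero torsion-free sheaf on the integral stack $\mathcal X$ has positive rank, so $\mu_\alpha(\mathcal Q)$ is defined. We then have a short exact sequence
$$0\to \mathcal K\to \mathcal E\to \mathcal Q\to 0,\qquad \mathcal K:=\ker\gamma,$$
and $\mathcal K$ is torsion-free as a subsheaf of $\mathcal E$.

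We split into two cases according to whether $\mathcal K$ vanishes. If $\mathcal K=0$, then $\mathcal E\cong \mathcal Q$. Hence $\det\mathcal E\cong\det\mathcal Q$, and therefore $\mu_\alpha(\mathcal E)=\mu_\alpha(\mathcal Q)$, which gives the claim with equality.

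If $\mathcal K\neq 0$, then $\mathcal K$ is a non-zero subsheaf of the $\alpha$-semi-stable sheaf $\mathcal E$, so $\mu_\alpha(\mathcal K)\le\mu_\alpha(\mathcal E)$. All three sheaves in the sequence are non-zero and torsion-free, so \Cref{L:SlpIncSlpInc} applies and converts this inequality into $\mu_\alpha(\mathcal E)\le\mu_\alpha(\mathcal Q)$. Put differently, this is exactly the characterization recorded after \Cref{L:SlpIncSlpInc}, that $\alpha$-semi-stability is equivalent to $\mu_\alpha(\mathcal E)\le\mu_\alpha(\mathcal Q)$ for every torsion-free quotient $\mathcal Q$.

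No step here is a genuine obstacle. The only points requiring care are the hypotheses of \Cref{L:SlpIncSlpInc}: that $\mathcal Q$ is torsion-free and of positive rank, and that the case $\mathcal K=0$ is handled separately. The lemma in turn relies on additivity of $c_1$ and of rank in short exact sequences of torsion-free sheaves, which follows from the multiplicativity of $\det$ established in the preliminaries on determinants.
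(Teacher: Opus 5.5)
Your proof is correct and is essentially the argument the paper has in mind: the paper just says the result follows from the preceding discussion, namely \Cref{L:SlpIncSlpInc} and the resulting characterization of $\alpha$-semi-stability via torsion-free quotients, applied to $\mathcal E\twoheadrightarrow\operatorname{Im}\gamma$. Your separate treatment of the case $\ker\gamma=0$ simply makes explicit the non-vanishing hypothesis of that lemma.
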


\begin{proof}
This follows immediately from the discussion above.
\end{proof}

With the above, we can give a generalization of \cite[Cor.~2.17]{GKP16}:

\begin{cor}

\label{C:GKP16-2.17}
Suppose that $\mathcal E$ and $\mathcal E''$ are non-zero torsion-free coherent sheaves on $\mathcal X$.  

\begin{enumerate} [label=(\arabic*)]
\item \label{E:C:GKP16-2.17(1)}  If $\mathcal E$ and $\mathcal E''$ are $\alpha$-semistable and if $\mu_\alpha(\mathcal E)>\mu_\alpha(\mathcal E'')$, then 
$\operatorname{Hom}_{\mathcal O_{\mathcal X}}(\mathcal E,\mathcal E'')=0$.

\item  If $\mathcal E$ and $\mathcal E''$ are stable and if $\mu_\alpha(\mathcal E)=\mu_\alpha(\mathcal E'')$, then any nonzero morphism $\mathcal E\to \mathcal E''$ is injective and generically an isomorphism. 
Given such a morphism $\mathcal E\to \mathcal E''$, if we assume in addition
that $\mathcal E$ is saturated in $\mathcal E''$, then the morphism $\mathcal E\to \mathcal E''$  is an isomorphism. (Recall that $\mathcal E$ is said to be saturated in $\mathcal E''$ if the quotient $\mathcal E'' /\mathcal E$ is
torsion-free.) 

\end{enumerate}
\end{cor}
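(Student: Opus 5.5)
For part (1) I will argue by contradiction. Suppose $\gamma:\mathcal E\to \mathcal E''$ is a nonzero morphism. Since $\mathcal E''$ is torsion-free, the image $\mathcal I:=\operatorname{Im}\gamma$ is a nonzero torsion-free subsheaf of $\mathcal E''$, so it has positive rank. \Cref{P:GKP16-2.16}, applied to $\gamma$ viewed as a morphism $\mathcal E\to\mathcal E''$, gives $\mu_\alpha(\mathcal E)\le \mu_\alpha(\mathcal I)$. Because $\mathcal I\subseteq \mathcal E''$ is a nonzero subsheaf and $\mathcal E''$ is $\alpha$-semistable, $\mu_\alpha(\mathcal I)\le \mu_\alpha(\mathcal E'')$. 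Together these give $\mu_\alpha(\mathcal E)\le\mu_\alpha(\mathcal E'')$, contradicting the hypothesis.

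For part (2), let $\gamma$ be nonzero, with image $\mathcal I$ and kernel $\mathcal K$. Both sheaves are stable, hence semistable, so the same chain as in (1) gives
\[
\mu_\alpha(\mathcal E)\le \mu_\alpha(\mathcal I)\le\mu_\alpha(\mathcal E'')=\mu_\alpha(\mathcal E),
\]
and therefore $\mu_\alpha(\mathcal I)=\mu_\alpha(\mathcal E)=\mu_\alpha(\mathcal E'')$.

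\emph{Injectivity.} Suppose $\mathcal K\neq 0$. As a subsheaf of $\mathcal E$ it is torsion-free, and $\operatorname{rk}\mathcal K<\operatorname{rk}\mathcal E$ because $\mathcal I\neq0$. In $0\to\mathcal K\to\mathcal E\to\mathcal I\to0$ all three sheaves are nonzero and torsion-free, so \Cref{L:SlpIncSlpInc} together with the equality $\mu_\alpha(\mathcal E)=\mu_\alpha(\mathcal I)$ yields $\mu_\alpha(\mathcal K)=\mu_\alpha(\mathcal E)$. This contradicts the stability of $\mathcal E$, so $\gamma$ is injective.

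\emph{Generic isomorphism.} Identify $\mathcal E$ with $\mathcal I\subseteq\mathcal E''$. If $\operatorname{rk}\mathcal E<\operatorname{rk}\mathcal E''$, then the stability of $\mathcal E''$ gives $\mu_\alpha(\mathcal E)<\mu_\alpha(\mathcal E'')$, a contradiction. Hence the ranks are equal, the quotient $\mathcal E''/\mathcal E$ has rank $0$, and so it is torsion. This means $\gamma$ is an isomorphism over a dense open substack.

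\emph{Saturated case.} If in addition $\mathcal E$ is saturated in $\mathcal E''$, then $\mathcal E''/\mathcal E$ is both torsion and torsion-free, so it vanishes and $\gamma$ is an isomorphism.

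The only point needing care is that all slopes are computed with the determinant and $c_1$ of \Cref{S:DetCoh} and \S\ref{S:ChernClXX}. Additivity of $c_1$ in short exact sequences of torsion-free sheaves, which is what \Cref{L:SlpIncSlpInc} uses, was already established there, so I expect no real obstacle.
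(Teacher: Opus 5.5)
Your part (1) is exactly the paper's argument: $\mu_\alpha(\mathcal E)\le\mu_\alpha(\operatorname{Im}\gamma)$ by \Cref{P:GKP16-2.16}, then $\mu_\alpha(\operatorname{Im}\gamma)\le\mu_\alpha(\mathcal E'')$ by semistability of $\mathcal E''$. The paper does not write out (2), leaving it to ``elementary Chern class computations,'' and your kernel/rank argument via \Cref{L:SlpIncSlpInc} correctly fills this in; one caveat is that ``stable hence semistable'' does not follow directly from the definitions here, since stability only constrains subsheaves of smaller rank, so you should cite \Cref{C:st-ss}, which relies on the movability of $\alpha$ through \Cref{L:muSat}.
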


%%%%%%%%%%%%%%%%%%%%%%%%%%%%%%%%%%%%%%%%%%%%%%%%%%%%%%%%%%%%%
\ifArxiv
\begin{proof}
   The corollary  follows from \Cref{P:GKP16-2.16} and  elementary Chern class computations. We show the first part. Let $\gamma:\mathcal E\to \mathcal E''$ be a nonzero homomorphism of $\mu_{\alpha}$-semi-stable sheaves with $\mu_{\alpha}(\m{E})> \mu_{\alpha}(\m{E}'')$.    Then we have $\mu_\alpha(\mathcal E)\le \mu_\alpha(\operatorname{Im}\gamma)\le \mu_\alpha(\mathcal E'')$, where the first inequality is from \Cref{P:GKP16-2.16}, and the second comes from the $\alpha$-semi-stability of $\mathcal E''$.  This contradicts $\mu_{\alpha}(\m{E})> \mu_{\alpha}(\m{E}'')$.
\end{proof}
\else
\begin{proof}
The proof is the same as in the case of varieties (e.g., \cite[Cor.~2.17]{GKP16}).
\end{proof}
\fi
%%%%%%%%%%%%%%%%%%%%%%%%%%%%%%%%%%%%%%%%%%%%%%%%%%%%%%%%%%%%%

We also have the following observation, generalizing \cite[Lem.~2.5]{CPFol19}.

\begin{lem}
\label{L:CP2.5}
Suppose that $\mathcal E$ and $\mathcal E''$ are non-zero torsion-free coherent sheaves on $\mathcal X$.  
\begin{enumerate} [label=(\alph*)]
\item \label{E:L:CP2.5a} If $\mu_\alpha^{\min}(\mathcal E)>\mu_\alpha^{\max}(\mathcal E'')$, then $\operatorname{Hom}(\mathcal E,\mathcal E'')=0$.

\item  \label{E:L:CP2.5b} If $\mu_\alpha^{\max}(\mathcal E'')<0$, then $H^0(\mathcal X,\mathcal E'')=0$.  
\end{enumerate}
\end{lem}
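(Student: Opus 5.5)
For part \ref{E:L:CP2.5a} I would argue by contradiction. Suppose $\gamma:\mathcal E\to\mathcal E''$ is a nonzero morphism, and set $\mathcal I:=\operatorname{Im}\gamma$. This is a nonzero subsheaf of the torsion-free sheaf $\mathcal E''$, so it is torsion-free of positive rank.

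The sequence $0\to\ker\gamma\to\mathcal E\to\mathcal I\to 0$ exhibits $\mathcal I$ as a torsion-free quotient of $\mathcal E$. By the definition of $\mu_\alpha^{\min}$ this gives $\mu_\alpha(\mathcal I)\ge\mu_\alpha^{\min}(\mathcal E)$.

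At the same time $\mathcal I\subseteq\mathcal E''$ is a nonzero coherent subsheaf, so by the definition of $\mu_\alpha^{\max}$ we get $\mu_\alpha(\mathcal I)\le\mu_\alpha^{\max}(\mathcal E'')$. Combining the two inequalities yields $\mu_\alpha^{\min}(\mathcal E)\le\mu_\alpha^{\max}(\mathcal E'')$, which contradicts the hypothesis. No existence statement for the sup or the inf is needed here; only the defining inequalities are used.

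For part \ref{E:L:CP2.5b} I would use $H^0(\mathcal X,\mathcal E'')=\operatorname{Hom}(\mathcal O_{\mathcal X},\mathcal E'')$ and apply \ref{E:L:CP2.5a} with $\mathcal E=\mathcal O_{\mathcal X}$. So the claim to establish is $\mu_\alpha^{\min}(\mathcal O_{\mathcal X})\ge 0$, since then $\mu_\alpha^{\min}(\mathcal O_{\mathcal X})\ge 0>\mu_\alpha^{\max}(\mathcal E'')$.

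Any torsion-free quotient $\mathcal Q$ of $\mathcal O_{\mathcal X}$ is nonzero and has rank at most $1$, hence rank exactly $1$. The kernel is then a torsion-free sheaf of rank $0$, so it vanishes, and $\mathcal Q\cong\mathcal O_{\mathcal X}$, giving $\mu_\alpha(\mathcal Q)=0$.

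It is cleaner, and equally easy, to argue directly with a nonzero section $s$. The image of $s:\mathcal O_{\mathcal X}\to\mathcal E''$ is a rank one torsion-free subsheaf, which is isomorphic to $\mathcal O_{\mathcal X}$ by the same kernel argument. Hence $0=\mu_\alpha(\mathcal O_{\mathcal X})\le\mu_\alpha^{\max}(\mathcal E'')<0$, a contradiction.

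The only point needing care is $c_1(\mathcal O_{\mathcal X})=0$. This holds because $\det\mathcal O_{\mathcal X}=\mathcal O_{\mathcal X}$ by \eqref{E:det-def}, and $c_1$ of the trivial bundle acts as zero in \eqref{E:c1LL}. So I do not expect a genuine obstacle; the whole argument is formal from the definitions in \S\ref{S:SlopeStab}.

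Notably, movability of $\alpha$ is not needed for either part. It would only enter if one instead used a saturation of $\mathcal I$ in $\mathcal E''$, where the effectivity of the determinant of the torsion quotient (\Cref{L:detTors}) would give $\mu_\alpha(\mathcal I^{\rm sat})\ge\mu_\alpha(\mathcal I)$.
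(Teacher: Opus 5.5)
Your proof is correct and follows essentially the same route as the paper. Part (a) factors a nonzero morphism through its torsion-free image and sandwiches its slope between $\mu_\alpha^{\min}(\mathcal E)$ and $\mu_\alpha^{\max}(\mathcal E'')$. Part (b) applies (a) to $\mathcal O_{\mathcal X}$, using that every torsion-free quotient of $\mathcal O_{\mathcal X}$ is $\mathcal O_{\mathcal X}$ itself, so $\mu_\alpha^{\min}(\mathcal O_{\mathcal X})=0$. Your remark that movability of $\alpha$ plays no role is also consistent with the paper, which observes right after this lemma that nothing special about $\alpha$ has been used so far.
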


%%%%%%%%%%%%%%%%%%%%%%%%%%%%%%%%%%%%%%%%%%%%%%%%%%%%%%%%%%%%%
\ifArxiv
\begin{proof}
It suffices to prove \ref{E:L:CP2.5a}, since $\mu_\alpha^{\min}(\mathcal O_{\mathcal X})=0$ (any surjection $\mathcal O_{\mathcal X}\to \mathcal Q\to 0$ to a torsion-free $\mathcal Q$ must have $\mathcal Q$ of rank-$1$, and therefore torsion kernel, and so must be an isomorphism).  For \ref{E:L:CP2.5a}, we observe that a morphism $\mathcal E\to \mathcal E''$ factors as $\mathcal E\twoheadrightarrow \mathcal Q\hookrightarrow \mathcal E''$, for some torsion-free coherent sheaf $\mathcal Q$ (since the image of the morphism $\mathcal E\to \mathcal E''$ is contained in the torsion-free sheaf $\mathcal E''$).  We then have $\mu_\alpha^{\min}(\mathcal E)\le \mu_\alpha(\mathcal Q)\le \mu_\alpha^{\max}(\mathcal E'')$, contradicting our assumption that $\mu_\alpha^{\min}(\mathcal E)>\mu_\alpha^{\max}(\mathcal E'')$.
\end{proof}
\else
\begin{proof}
The proof is the same as in the case of varieties (e.g., \cite[Lem.~2.5]{CPFol19}).
\end{proof}
\fi
%%%%%%%%%%%%%%%%%%%%%%%%%%%%%%%%%%%%%%%%%%%%%%%%%%%%%%%%%%%%%

\subsection{Maximal destabilizing subsheaves and Harder--Narasimhan filtrations}
Note that so far we have not used anything special about the class $\alpha$.  We now discuss maximally destabilizing subsheaves and the  Harder--Narhisimhan (HN) filtration, where we use that $\alpha$ is a movable class.  More precisely, we use that  for a  movable class  $\alpha$ and a torsion sheaf $\mathcal G$ that admits a surjection from a torsion-free sheaf, one has that $c_1(\mathcal G)\cdot \alpha\ge 0$; indeed, this follows from the definition of a movable class, since the determinant of such a torsion coherent sheaf is effective (\Cref{L:detTors}). The presentation in this section is taken essentially verbatim from \cite[\S 1.3]{huybrechts_lehn_2010}, with some minor clarifying details, replacing their slope stability by $\mu_{\alpha}$-stability, and replacing their projective variety with $\mathcal X$.

The main observation is the following:

\begin{lem}\label{L:muSat}
Let $\m{E}$ be a non-zero torsion-free coherent 
sheaf and let $\mathcal F \subseteq \mathcal  E$  be a non-zero subsheaf, with saturation in $\mathcal E$ denoted $\mathcal F^{\operatorname{sat}}$.  Then $\mu_\alpha(\mathcal F)\le \mu_\alpha(\mathcal F^{\operatorname{sat}})$.  
\end{lem}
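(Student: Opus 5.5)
The plan is to compare $\mathcal F$ and $\mathcal F^{\operatorname{sat}}$ through the torsion quotient $\mathcal G:=\mathcal F^{\operatorname{sat}}/\mathcal F$. Recall that $\mathcal F^{\operatorname{sat}}$ is the kernel of $\mathcal E\to (\mathcal E/\mathcal F)/\operatorname{Tors}$, so it sits in a short exact sequence
$$
0\to \mathcal F\to \mathcal F^{\operatorname{sat}}\to \mathcal G\to 0.
$$
Here $\mathcal G$ is the torsion subsheaf of $\mathcal E/\mathcal F$. Both $\mathcal F$ and $\mathcal F^{\operatorname{sat}}$ are torsion-free, being subsheaves of the torsion-free sheaf $\mathcal E$. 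They have the same rank, since $\mathcal G$ is torsion and hence has rank zero. Moreover $\mathcal G$ is a torsion coherent sheaf admitting a surjection from the torsion-free sheaf $\mathcal F^{\operatorname{sat}}$. So $\det\mathcal G$ is defined as in \S\ref{S:DetCoh}, and indeed $\det \mathcal G=\det(\mathcal F^{\operatorname{sat}})\otimes(\det\mathcal F)^{-1}$ directly from the definition \eqref{E:detGG}, using this very sequence. (Alternatively, \eqref{E:detLES} gives the same conclusion.)

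Passing to first Chern classes via \eqref{E:c1FF} and additivity of $c_1$ on tensor products of line bundles, we get
$$
c_1(\mathcal F^{\operatorname{sat}})=c_1(\mathcal F)+c_1(\mathcal G).
$$
Pairing with $\alpha$ gives $c_1(\mathcal F^{\operatorname{sat}})\cdot\alpha=c_1(\mathcal F)\cdot\alpha+c_1(\mathcal G)\cdot\alpha$.

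The key input is then \Cref{L:detTors}: $\det\mathcal G$ is effective, so $c_1(\mathcal G)$ is the class of an effective Cartier divisor. That divisor is pseudo-effective; alternatively, by the remark after \Cref{D:MoveClass}, it suffices that it be an effective $\mathbb Z$-divisor. Since $\alpha$ is movable, $c_1(\mathcal G)\cdot\alpha\ge 0$. Dividing by the common positive rank $\operatorname{rk}\mathcal F=\operatorname{rk}\mathcal F^{\operatorname{sat}}$ then yields $\mu_\alpha(\mathcal F)\le\mu_\alpha(\mathcal F^{\operatorname{sat}})$.

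The only mildly delicate point is the compatibility between the stacky first Chern class of an effective line bundle and the intersection pairing used to define movability. Concretely, one needs that if $\det\mathcal G\cong\mathcal O_{\mathcal X}(\mathcal D)$ with $\mathcal D$ effective, then $c_1(\det\mathcal G)\cdot\alpha=\mathcal D\cdot\alpha$ in the pairing \eqref{E:BilIntPairXX}. This holds by the very definition of that pairing via $\deg(c_1(\mathcal O(\mathcal D))\cap\alpha)$. I would state it briefly and otherwise treat it as routine.
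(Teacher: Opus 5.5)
Your proposal is correct and is essentially the paper's own argument. Both use the exact sequence $0\to\mathcal F\to\mathcal F^{\operatorname{sat}}\to\mathcal F^{\operatorname{sat}}/\mathcal F\to 0$, additivity of $c_1$, effectivity of the determinant of the torsion quotient via \Cref{L:detTors}, movability of $\alpha$, and equality of ranks; your added remark on matching $c_1(\mathcal O(\mathcal D))\cdot\alpha$ with the pairing is a harmless clarification the paper leaves implicit.
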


\begin{proof}
The key point is that $\alpha$ is a movable class.  We consider the short exact sequence
$$
0\to \mathcal F\to \mathcal F^{\operatorname{sat}}\to\mathcal F^{\operatorname{sat}}/\mathcal F\to 0.
$$
We have $c_1(\mathcal F^{\operatorname{sat}})= c_1(\mathcal F)+c_1(\mathcal F^{\operatorname{sat}}/\mathcal F)$, where here we are using that by construction, $\mathcal F^{\operatorname{sat}}/\mathcal F$ is a torsion sheaf that is the quotient of a torsion-free sheaf, and therefore has a determinant as defined in \S \ref{S:DetCoh}.    Since $\alpha$ is a movable class, and $\mathcal F^{\operatorname{sat}}/\mathcal F$ is torsion, we have $c_1(\mathcal F^{\operatorname{sat}}/\mathcal F)\cdot \alpha \ge 0$ (\Cref{L:detTors}), so that $c_1(\mathcal F)\cdot \alpha \le c_1(\mathcal F^{\operatorname{sat}})\cdot \alpha$.  As both $\mathcal F$ and $\mathcal F^{\operatorname{sat}}$ have the same rank, we find that $\mu_\alpha(\mathcal F)\le \mu_\alpha(\mathcal F^{\operatorname{sat}})$.   
\end{proof}

\begin{cor}\label{C:st-ss}
If $\mathcal E$ is $\alpha$-stable, then it is $\alpha$-semi-stable.
\end{cor}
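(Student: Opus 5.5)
The plan is to check $\mu_\alpha(\mathcal F)\le \mu_\alpha(\mathcal E)$ directly for every nonzero subsheaf $0\subsetneq \mathcal F\subseteq \mathcal E$, splitting according to whether $\operatorname{rk}(\mathcal F)<\operatorname{rk}(\mathcal E)$ or $\operatorname{rk}(\mathcal F)=\operatorname{rk}(\mathcal E)$. First note that $\mathcal F$ is torsion-free, being a subsheaf of the torsion-free sheaf $\mathcal E$. Since $\mathcal F\neq 0$ and $\mathcal X$ is integral, $\mathcal F$ has positive rank, so $\mu_\alpha(\mathcal F)$ is defined.

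Case $\operatorname{rk}(\mathcal F)<\operatorname{rk}(\mathcal E)$: then $\mathcal F\subsetneq \mathcal E$, and $\alpha$-stability gives $\mu_\alpha(\mathcal F)<\mu_\alpha(\mathcal E)$ directly.

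Case $\operatorname{rk}(\mathcal F)=\operatorname{rk}(\mathcal E)$: here stability says nothing, and this is the only place where a real argument is needed. The quotient $\mathcal E/\mathcal F$ has rank zero, so it is a torsion sheaf, and it is a quotient of the torsion-free sheaf $\mathcal E$. Hence $\det(\mathcal E/\mathcal F)$ is defined (\S\ref{S:DetCoh}), and \eqref{E:detLES} gives
\[
c_1(\mathcal E)=c_1(\mathcal F)+c_1(\mathcal E/\mathcal F).
\]
By \Cref{L:detTors}, $\det(\mathcal E/\mathcal F)$ is effective. Since $\alpha$ is movable, $c_1(\mathcal E/\mathcal F)\cdot\alpha\ge 0$, so $c_1(\mathcal F)\cdot\alpha\le c_1(\mathcal E)\cdot\alpha$. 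Dividing by the common rank gives $\mu_\alpha(\mathcal F)\le\mu_\alpha(\mathcal E)$.

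Equivalently, the second case is \Cref{L:muSat}: a full-rank subsheaf $\mathcal F\subseteq\mathcal E$ has saturation $\mathcal F^{\operatorname{sat}}=\mathcal E$, since $\mathcal E/\mathcal F$ is torsion. So I would simply cite \Cref{L:muSat} for this case.

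Combining the two cases gives $\mu_\alpha(\mathcal F)\le\mu_\alpha(\mathcal E)$ for all nonzero subsheaves $\mathcal F$, so $\mathcal E$ is $\alpha$-semi-stable. I expect no real obstacle. The only point needing care is that movability of $\alpha$ is what controls the full-rank subsheaves, and that input is supplied by \Cref{L:detTors} via \Cref{L:muSat}.
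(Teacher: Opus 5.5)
Your proof is correct and is essentially the paper's argument. The paper argues by contradiction: it first replaces a destabilizing $\mathcal F$ by its saturation via \Cref{L:muSat}, then uses stability to force $\operatorname{rk}\mathcal F=\operatorname{rk}\mathcal E$, hence $\mathcal F^{\operatorname{sat}}=\mathcal E$. You instead split by rank directly and invoke \Cref{L:muSat} (that is, \Cref{L:detTors} plus movability of $\alpha$) only in the full-rank case, which is the same key input.
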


%%%%%%%%%%%%%%%%%%%%%%%%%%%%%%%%%%%%%%%%%%%%%%%%% 
\ifArxiv
\begin{proof}
Suppose that $\mathcal E$ were $\alpha$-stable, but not $\alpha$-semi-stable.  Then, from the definition of $\alpha$-semi-stable,  there would be a non-zero $\mathcal F\subseteq \mathcal E$ such that $\mu_\alpha(\mathcal F)>\mu_\alpha(\mathcal E)$.
By \Cref{L:muSat}, one may assume that $\mathcal F$ is saturated. 
  On the other hand, by the definition of $\alpha$-stable, one would have to have $\operatorname{rk}(\mathcal F)=\operatorname{rk}(\mathcal E)$, so that $\mathcal F=\mathcal E$. But this would contradict $\mu_\alpha(\mathcal F)>\mu_\alpha(\mathcal E)$. 
\end{proof}
\else
\begin{proof}
By virtue of  \Cref{L:muSat}, the argument is the same as in the case of varieties (e.g., \cite[Cor.~2.14]{GKP16}).
\end{proof}
\fi 
%%%%%%%%%%%%%%%%%%%%%%%%%%%%%%%%%%%%%%%%%%%%%%%%%%%%%%%%%%

The main result we will want is the following, establishing the existence of  maximally destabilizing subsheaves, generalizing \cite[Cor.~2.24]{GKP16}: 

\begin{pro}[{Maximally destabilizing subsheaf}]

\label{P:GKP16-2.24}
Let $\m{E}$ be a non-zero torsion-free coherent sheaf. There exists a sheaf $\mathcal F \subseteq \mathcal  E$  such that the following holds.
\begin{enumerate}[label=(\alph*)]

\item \label{I:P:GKP16-2.24a} $\mu_\alpha(\mathcal F)=\mu_\alpha^{\max}(\mathcal E)$. 

\item  \label{I:P:GKP16-2.24b} If $\mathcal F'\subseteq \mathcal E$ is any other sheaf such that $\mu_\alpha(\mathcal F')=\mu_\alpha^{\max}(\mathcal E)$, then $\mathcal F'\subseteq \mathcal F$. 

\end{enumerate}
\end{pro}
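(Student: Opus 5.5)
The plan is to follow the standard argument of \cite[\S 1.3]{huybrechts_lehn_2010} and \cite[\S 2]{GKP16}. The one genuinely new input needed is that the set of slopes $\{\mu_\alpha(\mathcal F)\}$ is bounded above and takes only finitely many values above any given bound. Once that is known, $\mu_\alpha^{\max}(\mathcal E)$ is attained, and one chooses a maximal destabilizing subsheaf of maximal rank.

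\textbf{Step 1: reduce to saturated subsheaves.} By \Cref{L:muSat}, replacing $\mathcal F$ by $\mathcal F^{\operatorname{sat}}$ does not decrease the slope. So it suffices to consider saturated subsheaves, whose quotients $\mathcal E/\mathcal F$ are torsion-free.

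\textbf{Step 2: upper bound.} I would show $\sup\mu_\alpha(\mathcal F)<\infty$ and that the slopes of saturated subsheaves form a discrete set bounded above. The plan is to transfer the question to the coarse space. Choose $e$ with $\det(\mathcal F)^{\otimes e}$ pulled back from $X$; then $c_1(\mathcal F)\cdot\alpha=\frac1e\,c_1(M_{\mathcal F})\cdot\pi_*\alpha$. A cleaner route is to use the finite flat cover $q:V\to\mathcal X$. Since $q^*\mathcal F\subseteq q^*\mathcal E$, $\det q^*\mathcal F=q^*\det\mathcal F$, and $q^*\alpha$ is movable (\Cref{L:mov-pullXX}), the projection formula gives
\[
c_1(q^*\mathcal F)\cdot q^*\alpha=\deg(q)\,c_1(\mathcal F)\cdot\alpha .
\]
Hence $\mu_{q^*\alpha}(q^*\mathcal F)=\deg(q)\,\mu_\alpha(\mathcal F)$. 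On the smooth projective variety $V$, \cite[Prop.~2.21, Cor.~2.24]{GKP16} shows that $\mu_{q^*\alpha}^{\max}(q^*\mathcal E)$ is finite, which bounds $\mu_\alpha$ from above.

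For discreteness, note that $c_1(\mathcal F)\cdot\alpha$ lies in the image of $\operatorname{N}^1(\mathcal X)$ paired with $\alpha$. I would like to argue that the classes $c_1(\mathcal F)$, for saturated $\mathcal F$ with slope above a fixed bound, form a finite set, via the corresponding finiteness statement on $V$ \cite[Prop.~2.21]{GKP16}. That statement says the set of classes $c_1(\mathcal G)$, for $\mathcal G\subseteq q^*\mathcal E$ with $\mu_{q^*\alpha}(\mathcal G)$ bounded below, is finite. Applying it to $\mathcal G=q^*\mathcal F$, and using that $q^*:\operatorname{N}^1(\mathcal X)_{\mathbb R}\to\operatorname{N}^1(V)_{\mathbb R}$ is injective ($q_*q^*=\deg q$), we get finitely many possible $c_1(\mathcal F)$. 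Together with the finitely many possible ranks, this gives finitely many slopes above the bound. The supremum is therefore a maximum, which proves \ref{I:P:GKP16-2.24a} for some saturated $\mathcal F$.

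\textbf{Step 3: the maximal one.} Among subsheaves with $\mu_\alpha=\mu_\alpha^{\max}(\mathcal E)$, take one of maximal rank; by Step 1 it may be taken saturated. Call it $\mathcal F$. Given another such $\mathcal F'$, consider the exact sequence
\[
0\to\mathcal F\cap\mathcal F'\to\mathcal F\oplus\mathcal F'\to\mathcal F+\mathcal F'\to0 .
\]
Additivity of $c_1$ and rank gives
\[
\mu(\mathcal F\oplus\mathcal F')=\mu^{\max}\ge \mu(\mathcal F\cap \mathcal F').
\]
This holds when the intersection is nonzero; if $\mathcal F\cap\mathcal F'=0$, then $\mathcal F+\mathcal F'\cong\mathcal F\oplus\mathcal F'$ directly. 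By \Cref{L:SlpIncSlpInc} and \eqref{E:SlpArith}, $\mu(\mathcal F+\mathcal F')\ge\mu^{\max}$, so equality holds. Maximality of rank then forces $\operatorname{rk}(\mathcal F+\mathcal F')=\operatorname{rk}\mathcal F$. Thus $(\mathcal F+\mathcal F')/\mathcal F$ is a torsion subsheaf of the torsion-free sheaf $\mathcal E/\mathcal F$, hence zero, and $\mathcal F'\subseteq\mathcal F$. This proves \ref{I:P:GKP16-2.24b}.

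The main obstacle is Step 2, the finiteness or discreteness of slopes. The rest is formal, and I expect pulling back to $V$ together with \cite{GKP16} to handle it.
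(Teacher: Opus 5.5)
\textbf{Gap in Step~2: the supremum is not shown to be attained.} Your Step~3 is correct; it is the Huybrechts--Lehn form of the paper's longer computation for (b). The upper bound in Step~2 via $q\colon V\to\mathcal X$ is also fine, and it supplies the finiteness of $\mu_\alpha^{\max}(\mathcal E)$, which the paper uses tacitly. The problem is your argument that the supremum is attained.

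The finiteness statement you invoke (attributed to \cite[Prop.~2.21]{GKP16}) says that only finitely many classes $c_1(\mathcal G)$ occur among subsheaves whose slope is bounded below. This is false for movable classes, already when $\mathcal X=X$ is a variety and $q=\operatorname{id}$. Take $X=\mathbb P^1\times\mathbb P^1$ and $\alpha=[F]$, the class of a fibre of one ruling; it is nef, hence movable. Then the subsheaves $\mathcal O_X(-mF)\subseteq\mathcal O_X$ all have slope $0=\mu^{\max}_\alpha(\mathcal O_X)$, but their first Chern classes are pairwise distinct.

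Worse, $\alpha$ may be any real movable class, so even the set of slope values need not be discrete. Take a K3 surface of Picard number two whose Picard lattice represents neither $0$ nor $-2$, and let $\alpha$ span an irrational boundary ray of the nef ($=$ movable) cone. Then the slopes $-D\cdot\alpha$ of the subsheaves $\mathcal O(-D)\subseteq\mathcal O$, with $D$ effective, accumulate at $0$. So ``finitely many slopes above any bound'' is simply not available, and your proof of (a) fails at this step.

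You also cannot shortcut by taking the maximally destabilizing subsheaf of $q^*\mathcal E$ on $V$. There is no a priori reason for it to descend along the non-Galois cover $q$. That is exactly the difficulty the paper resolves later via Rydh's theorem, and that later argument itself uses the present proposition.

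\textbf{The missing idea.} What you need is the limiting argument of \cite[Prop.~2.22]{GKP16}, which the paper reproduces and which requires no discreteness.
\begin{itemize}
\item Suppose no subsheaf attains $\mu_\alpha^{\max}(\mathcal E)$, which is finite by your Step~2.
\item Choose saturated $\mathcal F_i\subseteq\mathcal E$ of a common rank $r$ such that $\mu_\alpha(\mathcal F_i)$ is strictly increasing and $\mu_\alpha(\mathcal F_i)>\mu_\alpha^{\max}(\mathcal E)-\frac1i$. Take $r$ maximal among all such sequences.
\item For $j>i$, the sheaves $\mathcal F_i\neq\mathcal F_j$ are saturated of equal rank, so $\operatorname{rk}(\mathcal F_i+\mathcal F_j)>r$.
\item Use the exact sequence $0\to\mathcal F_i\cap\mathcal F_j\to\mathcal F_i\oplus\mathcal F_j\to\mathcal F_i+\mathcal F_j\to0$ together with $\mu_\alpha(\mathcal F_i\cap\mathcal F_j)\le\mu^{\max}_\alpha(\mathcal E)$. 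This gives $\mu_\alpha(\mathcal F_i+\mathcal F_j)>\mu_\alpha^{\max}(\mathcal E)-\frac1i-\frac1j$.
\item Saturate these sums and pass to a subsequence of constant rank. This gives a sequence of rank $>r$ with the same properties, contradicting the maximality of $r$.
\end{itemize}
With this in place of your discreteness claim, the rest of your plan (Steps~1 and~3) goes through.
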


The sheaf $\mathcal F$ is called the \emph{maximally destabilizing subsheaf of $\mathcal E$}. It is clearly unique, $\alpha$-semi-stable, and saturated in $\mathcal E$.  
 Note the somewhat confusing terminology, since if $\mathcal E$ is $\alpha$-semi-stable, then it still has a maximally destabilizing subsheaf, namely itself, but nevertheless, $\mathcal E$ is $\alpha$-semi-stable.  In general, the maximally destabilizing subsheaf  $\mathcal F$, being saturated in $\mathcal E$, also satisfies the condition that if $\mathcal F\ne \mathcal E$ (i.e., if $\mathcal E$ is not $\alpha$-semi-stable), then  $\operatorname{rk}(\mathcal F)<\operatorname{rk}(\mathcal E)$.

%%%%%%%%%%%%%%%%%%%%%%%%%%%%%%%%%%%%%%%%%%%%%%%%% 
\ifArxiv
\begin{proof}
We start by showing \ref{I:P:GKP16-2.24a}, following the proof of \cite[Prop.~2.22]{GKP16}.  Assume for the sake of contradiction that $\mu_\alpha(\mathcal F)<\mu_\alpha^{\max}(\mathcal E)$ for any nonzero coherent subsheaf $\mathcal F\subseteq \mathcal E$.  One can then find a sequence of subsheaves $(\mathcal F_i)_{i\in \mathbb N}$ such that 
\begin{itemize}
\item $\mu_\alpha(\mathcal F_i)<\mu_\alpha(\mathcal F_{i+1})$.
\item $\mu_\alpha^{\max}(\mathcal E)-\frac{1}{i} < \mu_\alpha(\mathcal F_i)<\mu_\alpha^{\max}(\mathcal E)$.
\item The sheaves $\mathcal F_i$ are saturated in $\mathcal E$ (use \Cref{L:muSat}), and all have the same rank $r$.
\end{itemize}
In addition, we can assume that the rank $r$ is maximal among all sequences of sheaves satisfying the conditions above.  We will arrive at a  contradiction by showing that:
\begin{quote}
Given any real number $\epsilon>0$, there exists a non-zero subsheaf $\mathcal G_\epsilon \subseteq \mathcal E$ such that $\mu_\alpha(\mathcal G_\epsilon)> \mu_\alpha^{\max}(\mathcal E)-\epsilon$ and such that $\operatorname{rk}(\mathcal G_\epsilon)>r$. 
\end{quote}
To this end, let $\epsilon>0$ be a real number.  Then let $i$ be a natural number such that $\frac{1}{i}<\frac{\epsilon}{2}$, let $j>i$ be a larger natural number, and observe that $\mu_\alpha(\mathcal F_i)<\mu_{\alpha}(\mathcal F_{j})$, so that clearly $\mathcal F_i\ne \mathcal F_{j}$.  Consequently, as both $\mathcal F_i$ and $\mathcal F_{j}$ are saturated in $\mathcal E$, and of the same rank, they cannot be contained in one another (see also \cite[Lem.~A.10]{GKP14}), the sum $\mathcal G_\epsilon:=\mathcal F_i+\mathcal F_j$ therefore satisfies $\operatorname{rk}\mathcal G_\epsilon>r$ (look on an open set where $\mathcal F_i$ and $\mathcal F_j$ are locally free).   An elementary Chern class computation, spelled out in \cite[Lem.~A.12]{GKP14}, and which we reproduce below for completeness, shows that 
\begin{equation}\label{E:GKP14muEst}
\mu_\alpha(\mathcal G_\epsilon)>\mu_\alpha^{\max}(\mathcal E)-\frac{1}{i}-\frac{1}{j}>\mu_\alpha^{\max}(\mathcal E)-\epsilon,
\end{equation}
 giving our contradiction.

We now reproduce the Chern class computation for the left hand side of  \eqref{E:GKP14muEst}.
We consider the short exact sequence
$$
0\to \mathcal F_i\cap \mathcal F_i \to \mathcal F_i\oplus \mathcal F_j \to \mathcal F_i+\mathcal F_j\to 0.
$$
Since all the sheaves involved are torsion-free, hence in particular locally free in codimension 1, it follows that
\begin{align}
\label{E:detFiFjGKP} \det (\mathcal F_i+\mathcal F_j)&= \det \mathcal F_i+\det \mathcal F_j -\det (\mathcal F_i\cap \mathcal F_j)\\
\label{E:rkFiFjGKP}
\operatorname{rk}(\mathcal F_i+\mathcal F_j)&=\underbrace{\operatorname{rk}\mathcal F_i+\operatorname{rk}\mathcal F_j}_{=2r} - \operatorname{rk}(\mathcal F_i\cap \mathcal F_j).
\end{align}
We then have
\begin{align*}
& \operatorname{rk}(\mathcal F_i+\mathcal F_j) \cdot \mu_\alpha(\mathcal F_i+\mathcal F_j)\\
& = r\cdot \mu_\alpha(\mathcal F_i)+r\cdot \mu_\alpha(\mathcal F_j)-\operatorname{rk}(\mathcal F_i\cap \mathcal F_j)\cdot \mu_\alpha (\mathcal F_i\cap \mathcal F_j) & \text{Def.~of } \ \mu_\alpha \text { and } \eqref{E:detFiFjGKP} \\
&\ge r\cdot \left( \mu_\alpha(\mathcal F_i)+\mu_\alpha (\mathcal F_j)\right)-\operatorname{rk}(\mathcal F_i\cap \mathcal F_j)\cdot \mu_\alpha^{\max}(\mathcal E) & \text{Def.~of } \ \mu_\alpha^{\max}\\
&> r\cdot \left(2\mu_\alpha^{\max}(\mathcal E)-\frac{1}{i}-\frac{1}{j}\right)-\operatorname{rk}(\mathcal F_i\cap \mathcal F_j)\cdot \mu_\alpha^{\max}(\mathcal E) & \text{Def.~of } \ \mathcal F_i\\
& = -r\left(\frac{1}{i}+\frac{1}{j} \right)+\operatorname{rk}(\mathcal F_i+\mathcal F_j)\cdot \mu_\alpha^{\max}(\mathcal E). & \text{by \eqref{E:rkFiFjGKP}}
\end{align*}
Dividing by $\operatorname{rk}(\mathcal F_i+\mathcal F_j)>r$ gives the left hand side of \eqref{E:GKP14muEst}, completing the proof of  \ref{I:P:GKP16-2.24a}.

We now show \ref{I:P:GKP16-2.24b}.  Using part \ref{I:P:GKP16-2.24a}, we have a saturated subsheaf $\mathcal F\subseteq \mathcal E$ of maximal $\alpha$-slope, and we can choose such an $\mathcal F$ with   maximal rank.  Let $\mathcal F'$ be any other subsheaf of maximal slope, and assume for the sake of contradiction that  $\mathcal F'$ is not contained in $\mathcal F$.  Then we have $\mathcal F\subseteq \mathcal F+\mathcal F'$, with $\operatorname{rk}(\mathcal F)<\operatorname{rk}(\mathcal F+\mathcal F'$).  Thus, we must have $\mu_\alpha(\mathcal F+\mathcal F')<\mu_\alpha(\mathcal F)$.   We now do the following computation: 

\begin{align*}
&\operatorname{rk}(\mathcal F+\mathcal F')\cdot \left(\underbrace{\mu_\alpha(\mathcal F+\mathcal F') -\mu_\alpha(\mathcal F)}_{(*)}\right)+\left(\operatorname{rk}\mathcal F'-\operatorname{rk}(\mathcal F\cap \mathcal F')\right)\cdot \left(\underbrace{\mu_\alpha(\mathcal F)-\mu_\alpha(\mathcal F')}_{(**)}\right)\\
&= c_1(\mathcal F+\mathcal F')\cdot \alpha -\left(\frac{\operatorname{rk}(\mathcal F+\mathcal F')}{\operatorname{rk}(\mathcal F)}\right)\cdot c_1(\mathcal F)\cdot \alpha\\
&+\left(\frac{\operatorname{rk}(\mathcal F')}{\operatorname{rk}(\mathcal F)}\right)\cdot c_1(\mathcal F)\cdot \alpha -  c_1(\mathcal F')\cdot \alpha -\left(\frac{\operatorname{rk}(\mathcal F\cap \mathcal F')}{\operatorname{rk}(\mathcal F)}\right)\cdot c_1(\mathcal F)\cdot \alpha  + \left(\frac{\operatorname{rk}(\mathcal F\cap \mathcal F')}{\operatorname{rk}(\mathcal F')}\right)\cdot c_1(\mathcal F')\cdot \alpha\\
\end{align*}
\begin{align*}
&= c_1(\mathcal F+\mathcal F')\cdot \alpha + \left(-\left(\frac{\operatorname{rk}(\mathcal F+\mathcal F')}{\operatorname{rk}(\mathcal F)}\right)+\left(\frac{\operatorname{rk}(\mathcal F')}{\operatorname{rk}(\mathcal F)}\right) - \left(\frac{\operatorname{rk}(\mathcal F\cap \mathcal F')}{\operatorname{rk}(\mathcal F)}\right) \right) c_1(\mathcal F)\cdot \alpha\\
&+\left(-1+\left(\frac{\operatorname{rk}(\mathcal F\cap \mathcal F')}{\operatorname{rk}(\mathcal F')}\right)\right) c_1(\mathcal F')\cdot \alpha\\
\end{align*}
\begin{align*}
&= c_1(\mathcal F+\mathcal F')\cdot \alpha - c_1(\mathcal F)\cdot \alpha 
+\left(-1+\left(\frac{\operatorname{rk}(\mathcal F\cap \mathcal F')}{\operatorname{rk}(\mathcal F')}\right)\right) c_1(\mathcal F')\cdot \alpha \\
&= -c_1(\mathcal F\cap \mathcal F')\cdot \alpha +  \left(\frac{\operatorname{rk}(\mathcal F\cap \mathcal F')}{\operatorname{rk}(\mathcal F')}\right) c_1(\mathcal F')\cdot \alpha\\
&= \operatorname{rk}(\mathcal F\cap \mathcal F')\cdot \left(\underbrace{\mu_\alpha (\mathcal F')-\mu_\alpha(\mathcal F\cap \mathcal F')}_{(***)}\right)
\end{align*}
Now, since $\mu_\alpha(\mathcal F+\mathcal F')<\mu_\alpha(\mathcal F)=\mu_\alpha(\mathcal F')$ ($=\mu_\alpha^{\max}(\mathcal E)$), we have that $(*)$ is negative and $(**)$ is zero.  Thus $(***)$ must be negative, so that $\mu_\alpha(\mathcal F')<\mu_\alpha(\mathcal F\cap \mathcal F')$, which is a contradiction, as $\mu_\alpha(\mathcal F')$ was assumed to be maximal. 
\end{proof}
\else 
\begin{proof}
The proof is the same as in the case of varieties (see, e.g., \cite[Prop.~2.22 and Cor.~2.24]{GKP16}).
\end{proof}
\fi 
%%%%%%%%%%%%%%%%%%%%%%%%%%%%%%%%%%%%%%%%%%%%%%%%%%%%%%%%%%

\begin{dfn}[Harder--Narisimhan (HN) filtration]
    Fix a movable class $\alpha$. Let $\m{E}$ be a non-zero torsion-free coherent sheaf on $\X$. A \emph{Harder--Narasimhan (HN) filtration} for $\m{E}$ is an increasing filtration 
    \[0=HN_0(\m{E})\subseteq HN_1(\m{E})\subseteq \cdots \subseteq HN_\ell(\m{E})=\m{E},\]
    so that the factors $\operatorname{gr}_i^{HN}(\m{E}):= HN_i(\m{E})/HN_{i-1}(\m{E})$ for $i=1,\cdots,\ell$ are $\mu_{\alpha}$-semistable torsion-free sheaves with slopes $\mu_i:=\mu_\alpha(\operatorname{gr}_i^{HN}(\m{E}))$ satisfying: 
    
    \begin{equation}\label{E:HNslopes}
  \mu_{\alpha}^{\max'}(\m{E}):=\mu_1>\cdots >\mu_\ell=:\mu_{\alpha}^{\min '}(\m{E}).
\end{equation}

\end{dfn}

\begin{lem}[Existence of HN filtrations]\label{L:HNFil}
For any torsion-free coherent sheaf $\mathcal E$, there exists a unique HN filtration for $\mathcal E$.  

\end{lem}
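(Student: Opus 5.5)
Existence will be proved by induction on $\operatorname{rk}(\mathcal E)$ using \Cref{P:GKP16-2.24}. If $\mathcal E$ is $\alpha$-semi-stable, take the trivial filtration $0\subseteq \mathcal E$. Otherwise, let $\mathcal E_1\subseteq \mathcal E$ be the maximally destabilizing subsheaf. It is $\alpha$-semi-stable, saturated, and has $\operatorname{rk}(\mathcal E_1)<\operatorname{rk}(\mathcal E)$. So $\mathcal E/\mathcal E_1$ is a nonzero torsion-free sheaf of smaller rank, and by induction it has an HN filtration $0=\mathcal G_0\subseteq \mathcal G_1\subseteq\cdots\subseteq \mathcal G_{m}=\mathcal E/\mathcal E_1$. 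Set $HN_1(\mathcal E)=\mathcal E_1$ and let $HN_{i+1}(\mathcal E)$ be the preimage of $\mathcal G_i$ in $\mathcal E$. The graded pieces are $\mathcal E_1$ and the $\operatorname{gr}_i(\mathcal G_\bullet)$, all torsion-free and $\alpha$-semi-stable. It remains to show the slopes strictly decrease, and for this it suffices to show $\mu_\alpha(\mathcal E_1)>\mu_\alpha(\mathcal G_1)$.

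The key step is this strict inequality. Suppose instead $\mu_\alpha(\mathcal G_1)\ge \mu_\alpha(\mathcal E_1)$, and let $\mathcal H\subseteq \mathcal E$ be the preimage of $\mathcal G_1$. Then $0\to\mathcal E_1\to\mathcal H\to\mathcal G_1\to 0$ is exact, and by additivity of $c_1$ and rank together with \eqref{E:SlpArith}, $\mu_\alpha(\mathcal H)\ge \mu_\alpha(\mathcal E_1)=\mu_\alpha^{\max}(\mathcal E)$. Hence $\mu_\alpha(\mathcal H)=\mu_\alpha^{\max}(\mathcal E)$. By \Cref{P:GKP16-2.24}\ref{I:P:GKP16-2.24b} this forces $\mathcal H\subseteq \mathcal E_1$, contradicting $\operatorname{rk}\mathcal H>\operatorname{rk}\mathcal E_1$.

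For uniqueness, let $0=\mathcal E'_0\subseteq\cdots\subseteq\mathcal E'_k=\mathcal E$ be any HN filtration. I will show $\mathcal E'_1=\mathcal E_1$; induction on rank applied to $\mathcal E/\mathcal E_1$ then finishes.

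1. First, $\mu_\alpha^{\max}(\mathcal E)=\mu_1'$, the slope of the first graded piece. Let $\mathcal F\subseteq\mathcal E$ be nonzero and choose the least $j$ with $\mathcal F\subseteq \mathcal E'_j$. The induced map $\mathcal F\to \operatorname{gr}_j'$ is nonzero. The image of $\mathcal F$ in $\operatorname{gr}_j'$ is a subsheaf of a semi-stable sheaf, so it has slope $\le\mu'_j\le\mu'_1$. The kernel $\mathcal F\cap\mathcal E'_{j-1}$ is handled by induction on $j$.
2. Combining the kernel and image via \eqref{E:SlpArith} gives $\mu_\alpha(\mathcal F)\le \mu_1'$.
3. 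Hence $\mathcal E'_1$ has maximal slope, so $\mathcal E'_1\subseteq\mathcal E_1$ by \Cref{P:GKP16-2.24}\ref{I:P:GKP16-2.24b}.
4. Conversely, suppose $\mathcal E_1\not\subseteq\mathcal E'_1$. Take the least $j\ge 2$ with $\mathcal E_1\subseteq \mathcal E'_j$. Then $\mathcal E_1\to\operatorname{gr}'_j$ is a nonzero map from a semi-stable sheaf of slope $\mu_1'$ to a semi-stable sheaf of slope $\mu'_j<\mu'_1$. This contradicts \Cref{C:GKP16-2.17}\ref{E:C:GKP16-2.17(1)}.
5. Therefore $\mathcal E_1=\mathcal E'_1$, and the quotient filtrations on $\mathcal E/\mathcal E_1$ are HN filtrations, so they agree by induction.

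The main obstacle is step 1, the bookkeeping showing $\mu_\alpha^{\max}(\mathcal E)=\mu'_1$. It needs the slope inequality for the image when the image is a subsheaf of $\operatorname{gr}'_j$ that may fail to be saturated. This is fine, since semi-stability bounds all subsheaves, and \Cref{L:muSat} is only needed to ensure the quotients $\mathcal E/\mathcal E_1$ are torsion-free.
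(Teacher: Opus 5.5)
Your proof is correct. The existence half is the paper's argument. You also make explicit a point the paper leaves implicit: the strict inequality $\mu_\alpha(\mathcal E_1)>\mu_\alpha(\mathcal G_1)$ needs part \ref{I:P:GKP16-2.24b} of \Cref{P:GKP16-2.24}, since maximality of the slope alone only gives $\ge$.

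For uniqueness you take a different route. The paper compares two arbitrary HN filtrations $\mathcal E_\bullet,\mathcal E'_\bullet$ symmetrically. Assuming without loss of generality that $\mu_\alpha(\mathcal E'_1)\ge\mu_\alpha(\mathcal E_1)$, it takes the least $j$ with $\mathcal E'_1\subseteq\mathcal E_j$. The nonzero map $\mathcal E'_1\to\mathcal E_j/\mathcal E_{j-1}$, together with \Cref{C:GKP16-2.17}\ref{E:C:GKP16-2.17(1)}, gives the chain $\mu_\alpha(\mathcal E_j/\mathcal E_{j-1})\ge\mu_\alpha(\mathcal E'_1)\ge\mu_\alpha(\mathcal E_1)\ge\mu_\alpha(\mathcal E_j/\mathcal E_{j-1})$. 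This forces $j=1$, and the resulting equality of first slopes lets one swap the roles of the two filtrations.

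You instead anchor everything to the maximally destabilizing subsheaf. Your induction along the filtration shows that the first piece of \emph{any} HN filtration realizes $\mu_\alpha^{\max}(\mathcal E)$. After that, \Cref{P:GKP16-2.24}\ref{I:P:GKP16-2.24b} gives $\mathcal E'_1\subseteq\mathcal E_1$, and Hom-vanishing gives the reverse inclusion.

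The paper's version is shorter and uses only \Cref{C:GKP16-2.17}. Yours costs the extra bookkeeping in your step 1, which you handle correctly: kernel and image are torsion-free, so $c_1$ and rank are additive. In exchange, it proves along the way that the first step of every HN filtration is the maximally destabilizing subsheaf, and that its slope is $\mu_\alpha^{\max}(\mathcal E)$. This is the first half of \Cref{L:min-slope-equals-negative-slope-of-dual-max}\ref{L:min-slope-equals-negative-slope-of-dual-max3}, which the paper only establishes afterwards.
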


%%%%%%%%%%%%%%%%%%%%%%%%%%%%%%%%%%%%%%%%%%%%%%%%% 
\ifArxiv
\begin{proof}
    Let $\m{E}$ be a torsion-free coherent sheaf. We first show existence. Let $\m{E}_1$ be the maximal destabilizing subsheaf from \Cref{P:GKP16-2.24}. By induction on the rank, as the quotient is torsion-free, we can assume that $\m{E}/\m{E}_1$ has a HN filtration $0=\m{G}_0\subseteq \cdots \subseteq \m{G}_{\ell-1}=\m{E}/\m{E}_1$. If $\m{E}_{i}\subseteq \m{E}_{i+1}$
    denotes the pre-image of $\m{G}_i$, all that is left to show is 
     that the slopes are decreasing.  Inductively, it suffices to show that $\mu_{\alpha}(\m{E}_1)>\mu_{\alpha}(\m{E}_2/\m{E}_1)$. For this consider the short exact sequence $0\to \mathcal E_1\to \mathcal E_2\to \mathcal E_2/\mathcal E_1\to 0$, and employ \Cref{L:SlpIncSlpInc} together with the maximality of $\mu_\alpha(\mathcal E_1)$. 

    Now we argue uniqueness. Assume that $\m{E}_{\bullet}$ and $\m{E}_{\bullet}'$ are two HN filtrations. Without loss of generality, $\mu_{\alpha}(\m{E}_1')\geq \mu_{\alpha}(\m{E}_1)$. Let $j$ be minimal so that $\m{E}_1'\subseteq \m{E}_j$. Then, the composition 
    $$\m{E}_1'\ra \m{E}_j\ra \m{E}_j/\m{E}_{j-1}$$
    is a non-trivial morphism of semistable sheaves, and, by \Cref{C:GKP16-2.17}\ref{E:C:GKP16-2.17(1)}, this implies the first inequality below:
    $$\mu_{\alpha}(\m{E}_j/\m{E}_{j-1})\geq \mu_{\alpha}(\m{E}_1')\geq \mu_{\alpha}(\m{E}_1)\geq \mu_{\alpha}(\m{E}_j/\m{E}_{j-1}).$$
    The second inequality above is by assumption, and the third is by definition of the slopes in an HN filtration. 
    Hence, equality holds everywhere, and the last equality then implies $j=1$. So, $\m{E}_1'\subseteq \m{E}_1$. 
    At the same time, the fact that $\mu_{\alpha}(\m{E}_1')= \mu_{\alpha}(\m{E}_1)$
means we may reverse the roles of $\m{E}_{\bullet}$ and $\m{E}_{\bullet}'$ in the argument above, and so we get equality $\m{E}_1=\m{E}_1'$. By induction on rank, we can assume that $\m{E}/\m{E}_1$ has a unique HN filtration. This shows that $\m{E}_i/\m{E}_1=\m{E}_i'/\m{E}_1$, and so  we are done.
\end{proof}
\else
\begin{proof}
The proof is the same as in the case of varieties (see, e.g., \cite[\S 1.3]{huybrechts_lehn_2010}).
\end{proof}
\fi
%%%%%%%%%%%%%%%%%%%%%%%%%%%%%%%%%%%%%%%%%%%%%%%%% 

\begin{lem}\label{L:min-slope-equals-negative-slope-of-dual-max}
   For a torsion-free coherent sheaf $\mathcal E$ on $\X$, we have:
\begin{enumerate} [label=(\arabic*)]
\item \label{L:min-slope-equals-negative-slope-of-dual-max1}
   $\mu_{\alpha}^{\min}(\mathcal E)=-\mu_{\alpha}^{\max}(\mathcal E^\vee)$. 
   
\item \label{L:min-slope-equals-negative-slope-of-dual-max2}
 The supremum in $\mu_{\alpha}^{\max}(\mathcal E)$ is a maximum and 
the infemum in $\mu_{\alpha}^{\min}(\mathcal E)$ is a minimum. 

\item \label{L:min-slope-equals-negative-slope-of-dual-max3} In the notation of \eqref{E:HNslopes}, regarding the slopes in the HN filtration, 
 $\mu_{\alpha}^{\max}(\mathcal E)=\mu_{\alpha}^{\max '}(\mathcal E)$  and $\mu_{\alpha}^{\min}(\mathcal E)=\mu_{\alpha}^{\min'}(\mathcal E)$.
\end{enumerate}
   
\end{lem}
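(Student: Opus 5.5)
I would prove the three parts in the order (2), (3), (1), since the first two come almost directly from \Cref{P:GKP16-2.24} and \Cref{L:HNFil}, and duality is cleanest once $\mu_\alpha^{\min}$ is known to be attained by a torsion-free quotient.

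\emph{Parts (2) and (3).} By \Cref{P:GKP16-2.24}\ref{I:P:GKP16-2.24a}, the supremum defining $\mu_\alpha^{\max}(\mathcal E)$ is attained by the maximally destabilizing subsheaf $\mathcal E_1$. This subsheaf is also the first step $HN_1(\mathcal E)$ of the HN filtration, by the construction in \Cref{L:HNFil} and its uniqueness. Hence $\mu_\alpha^{\max}(\mathcal E)=\mu_1=\mu_\alpha^{\max'}(\mathcal E)$.

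For the minimum, let $\mathcal Q_\ell:=\operatorname{gr}^{HN}_\ell(\mathcal E)=\mathcal E/HN_{\ell-1}(\mathcal E)$. This is a torsion-free quotient of slope $\mu_\ell$, so $\mu_\alpha^{\min}(\mathcal E)\le \mu_\ell$. Conversely, let $\mathcal E\to \mathcal Q\to 0$ be any torsion-free quotient and let $j$ be minimal with $HN_j(\mathcal E)\to \mathcal Q$ nonzero. Such a $j$ exists because $HN_\ell(\mathcal E)=\mathcal E$ surjects onto $\mathcal Q$. The map then factors through a nonzero map $\operatorname{gr}_j\to \mathcal Q$.

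I would then argue by induction on rank along the filtration. The image $\mathcal I$ of $\mathcal E\to\mathcal Q$ is all of $\mathcal Q$, and it carries the filtration by the images of the $HN_i(\mathcal E)$. Each graded piece of this image filtration is a quotient of $\operatorname{gr}_i$, modulo torsion. Applying \Cref{P:GKP16-2.16} to each nonzero graded piece, after saturating and using \Cref{L:muSat} (which only increases $c_1\cdot\alpha$ when torsion is killed), each piece has slope $\ge \mu_i\ge\mu_\ell$. Additivity of $c_1$ and of rank, via \eqref{E:SlpArith}, then gives $\mu_\alpha(\mathcal Q)\ge\mu_\ell$.

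The delicate point is this bookkeeping: the graded pieces of the induced filtration on $\mathcal Q$ may have torsion, and the quotient maps may not be saturated. I would handle it by working with the torsion-free quotients $\operatorname{gr}_i\to \mathcal G_i$, where $\mathcal G_i$ is the image modulo its torsion. The torsion contributes nonnegatively to $c_1\cdot\alpha$ by \Cref{L:detTors} and the movability of $\alpha$. Consequently $c_1(\mathcal Q)\cdot\alpha\ge\sum_i c_1(\mathcal G_i)\cdot\alpha$ with the ranks adding up, and each $\mathcal G_i$ has slope $\ge\mu_i$ by the semistability of $\operatorname{gr}_i$. This gives $\mu_\alpha^{\min}(\mathcal E)=\mu_\ell=\mu_\alpha^{\min'}(\mathcal E)$, attained by $\mathcal Q_\ell$.

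\emph{Part (1).} First, $\mathcal E^\vee$ is reflexive, hence torsion-free, and $c_1(\mathcal E^\vee)=-c_1(\mathcal E)$. This holds because both are line bundles determined on the codimension-$\ge2$-complement open set where $\mathcal E$ is locally free, by \eqref{E:detVBU}. Dualizing the HN filtration of $\mathcal E$ on that open set gives a filtration of $\mathcal E^\vee$ with graded pieces $(\operatorname{gr}_{\ell+1-i})^\vee$ and slopes $-\mu_{\ell+1-i}$, which are strictly decreasing. Each piece is semistable, since dualization reverses sub and quotient and $c_1$ changes sign, using \Cref{L:SlpIncSlpInc}.

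Taking saturations to obtain a genuine filtration of $\mathcal E^\vee$ by saturated subsheaves agreeing with the dual filtration off codimension $2$ does not change $c_1$ or rank. By uniqueness in \Cref{L:HNFil}, this is the HN filtration of $\mathcal E^\vee$. Part (3) applied to $\mathcal E^\vee$ then gives $\mu_\alpha^{\max}(\mathcal E^\vee)=-\mu_\ell=-\mu_\alpha^{\min}(\mathcal E)$.

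The main obstacle is checking semistability of the duals of the graded pieces when they are only torsion-free. I would handle this by noting that a subsheaf $\mathcal S\subseteq\operatorname{gr}^\vee$ gives, on the big open set, a torsion-free quotient of $\operatorname{gr}^{\vee\vee}$. Since $\operatorname{gr}$ and $\operatorname{gr}^{\vee\vee}$ agree in codimension $1$, they have the same slope, and the inequality then transfers.
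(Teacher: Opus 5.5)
Your proposal is correct, but it is organized differently from the paper's proof.

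\textbf{The paper's route.} The paper proves (1) directly, with no Harder--Narasimhan theory:
\begin{itemize}
\item a torsion-free quotient $\mathcal E\to\mathcal Q$ dualizes to a subsheaf $\mathcal Q^\vee\subseteq\mathcal E^\vee$ of slope $-\mu_\alpha(\mathcal Q)$;
\item a subsheaf $\mathcal F\subseteq\mathcal E$ dualizes to a generically surjective map $\mathcal E^\vee\to\mathcal F^\vee$, whose image has slope $\le-\mu_\alpha(\mathcal F)$ by \Cref{L:detTors};
\item the two resulting inequalities are closed up using the auxiliary identity $\mu_\alpha^{\min}(\mathcal E)=\mu_\alpha^{\min}(\mathcal E^{\vee\vee})$.
\end{itemize}
For the minimum in (2)--(3), the paper argues by contradiction. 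It shows $\mu_\alpha(\mathcal E)\ge\mu_\alpha(\mathcal E/HN_{\ell-1}(\mathcal E))$ inductively. It then replaces an offending quotient by the last graded piece of its own HN filtration, so that it is semistable. Finally it applies Hom-vanishing (\Cref{C:GKP16-2.17}) successively along the filtration.

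\textbf{Your minimum argument.} Your averaging over the image filtration $\mathcal I_i=\operatorname{Im}(HN_i(\mathcal E)\to\mathcal Q)$ replaces all of this with a direct estimate, and it is valid.
\begin{itemize}
\item The torsion of $\mathcal I_i/\mathcal I_{i-1}$ is $\mathcal I_{i-1}^{\mathrm{sat}}/\mathcal I_{i-1}$, with saturation taken in $\mathcal I_i$. It is therefore a quotient of a torsion-free sheaf, so \Cref{L:detTors} applies to it.
\item Each nonzero $\mathcal G_i=\mathcal I_i/\mathcal I_{i-1}^{\mathrm{sat}}$ is a torsion-free quotient of the semistable $\operatorname{gr}_i$, so it has slope $\ge\mu_i\ge\mu_\ell$.
\end{itemize}
Your remarks about a minimal $j$ and induction on rank are not needed for this.

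\textbf{Your deduction of (1).} You deduce (1) from uniqueness in \Cref{L:HNFil} by dualizing the HN filtration. Here you can simply take $F_i:=(\mathcal E/HN_{\ell-i}(\mathcal E))^\vee$. It is already saturated in $\mathcal E^\vee$, since $\mathcal E^\vee/F_i\hookrightarrow HN_{\ell-i}(\mathcal E)^\vee$. Moreover $F_i/F_{i-1}\hookrightarrow\operatorname{gr}_{\ell+1-i}^\vee$ is an isomorphism in codimension one. So no separate saturation step is needed.

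\textbf{Comparison.} Your route costs a separate check that duals of semistable torsion-free sheaves are semistable. That check is essentially the paper's subsheaf-to-generically-surjective-quotient trick, applied to each graded piece. In return you obtain the whole HN filtration of $\mathcal E^\vee$, and hence \Cref{C:DualStab} immediately. The paper's proof of (1), by contrast, is independent of HN filtrations entirely.
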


%%%%%%%%%%%%%%%%%%%%%%%%%%%%%%%%%%%%%%%%%%%%%%%%% 
\ifArxiv
\begin{proof} We have already seen the first part of  \ref{L:min-slope-equals-negative-slope-of-dual-max2},  i.e.,  that the 
 supremum in $\mu_{\alpha}^{\max}$ is a maximum, achieved by the maximally destabilizing subsheaf, and, since the maximally destabilizing subsheaf was used to construct the first step in the HN filtration, we also have the first part of \ref{L:min-slope-equals-negative-slope-of-dual-max3}, as well, that   $\mu_{\alpha}^{\max}(\mathcal E)=\mu_{\alpha}^{\max '}(\mathcal E)$.

We next move to prove \ref{L:min-slope-equals-negative-slope-of-dual-max1}.  
We start by establishing the related result, that $\mu_{\alpha}^{\min}(\mathcal E)= \mu_{\alpha}^{\min} (\mathcal E^{\vee \vee})$. 
  To begin the argument, given a surjection $\mathcal E\xrightarrow{e} \mathcal {Q}\ra 0$ to a torsion-free coherent sheaf $\mathcal Q$, we get $\m{E}^{\vee \vee}\xrightarrow{e^{\vee \vee}}\mathcal Q^{\vee\vee}$ which is not necessarily surjective but gives $\mathcal E^{\vee \vee}\twoheadrightarrow \operatorname{Im}(e^{\vee \vee })\subseteq \mathcal Q^{\vee \vee}$, where the first map is a surjection. Moreover, $\operatorname{coker}(\operatorname{Im}(e^{\vee \vee})\hookrightarrow \mathcal Q^{\vee \vee})$ is torsion and admits a surjection from the torsion-free sheaf $\mathcal Q^{\vee \vee}$. 
  Hence, as $\alpha$ is a movable class and as $\mathcal Q$ is locally free away from codimension $2$ and so satisfies $\det \mathcal Q=\det \mathcal Q^{\vee \vee}$, by \Cref{L:detTors} we have $\mu_{\alpha}(\mathcal Q)=\mu_{\alpha}(\mathcal Q^{\vee \vee})\geq \mu_{\alpha}(\operatorname{Im}(e^{\vee \vee}))$. So, $\mu_{\alpha}^{\min}(\mathcal E)\geq \mu_{\alpha}^{\min} (\mathcal E^{\vee \vee})$. On the other hand, for any surjection $\mathcal E^{\vee \vee}\ra \mathcal Q \ra 0 $, we obtain, via the inclusion $\mathcal E\ra \mathcal E^{\vee \vee}$, a generic surjection $\mathcal E\to \mathcal Q$. Call the image $\mathcal Q_0:=\operatorname{Im}(\mathcal E\ra \mathcal Q)$. Then, $\operatorname{coker}(\mathcal Q_0\hookrightarrow  \mathcal Q)$ is torsion and admits a surjection from the torsion-free sheaf $\mathcal Q$; hence we get  a quotient $\mathcal E\to \mathcal Q_0\to 0$ such that $\mu_{\alpha}(\mathcal Q_0)\leq \mu_{\alpha}(\mathcal Q)$. 
  Hence 
  $\mu_{\alpha}^{\min}(\mathcal E)\leq \mu_{\alpha}^{\min} (\mathcal E^{\vee \vee})$, completing the proof that $\mu_{\alpha}^{\min}(\mathcal E)= \mu_{\alpha}^{\min} (\mathcal E^{\vee \vee})$. 
    
  Now, we show that  $\mu_{\alpha}^{\min}(\mathcal E)=-\mu_{\alpha}^{\max}(\mathcal E^\vee)$. We start   by observing that for a torsion-free sheaf $\mathcal E$, one has $c_1(\m{E}^\vee)=-c_1(\mathcal E)$; restrict to an open with complement of codimension $2$ over which $\mathcal E$ is locally free. 
   One has:
    \begin{align*}
       \mu_{\alpha}^{\min}(\mathcal E) & := \inf\{\mu_{\alpha}(\mathcal Q)\, \mid \, \forall \, \mathcal E \ra \mathcal Q \ra 0, \, \textrm{torsion-free quotient} \}\\
        & = -\sup\{\mu_{\alpha}(\mathcal Q^\vee)\, \mid \, \forall \, \mathcal E \ra \mathcal Q \ra 0, \, \textrm{torsion-free quotient} \}\\
        & = -\sup\{\mu_{\alpha}(\mathcal Q^\vee)\, \mid \, \forall \, \mathcal Q^\vee \subseteq \mathcal E^\vee  \, \textrm{where $\mathcal Q$ is a torsion free quotient of $\mathcal E$} \}\\
        &\geq -\mu_{\alpha}^{\max}(\mathcal E^\vee).
    \end{align*}
Hence, we have obtained that $\mu_{\alpha}^{\min}(\mathcal E) \geq -\mu_{\alpha}^{\max}(\mathcal E^\vee)$.  Because $\mu_{\alpha}^{\min}(\mathcal E)=\mu_{\alpha}^{\min}(\mathcal E^{\vee \vee})$, to show the inequality above is equality, it is sufficient to show $\mu_{\alpha}^{\min}(\mathcal E^\vee) \leq -\mu_{\alpha}^{\max}(\mathcal E)$. To see this, note that
\begin{align*}
    -\mu_{\alpha}^{\max}(\mathcal E) & :=-\sup\{\mu_{\alpha}(\mathcal F)\,\mid\, \mathcal F\subseteq \mathcal E\} \\ 
    & = \inf\{\mu_{\alpha}(\mathcal F^\vee)\,\mid\, \mathcal F\subseteq \mathcal E\} \\
    & \geq \inf\{\mu_{\alpha}(\mathcal Q)\,\mid\, \mathcal E^\vee \ra \mathcal Q \ra 0, \textrm{ torsion free }\}
\end{align*}
    where the last inequality follows because $e:\mathcal F\hookrightarrow  \mathcal E$ implies that $e^\vee:\mathcal E^\vee \ra \mathcal F^\vee$ is generically surjective, and we get $\mu_{\alpha}(\operatorname{Im}(e^\vee))\leq \mu_{\alpha}(\mathcal F^\vee)$. Hence, we obtain $\mu_{\alpha}^{\min}(\mathcal E^\vee) \leq -\mu_{\alpha}^{\max}(\mathcal E)$.  This completes the proof that $\mu_{\alpha}^{\min}(\mathcal E) = -\mu_{\alpha}^{\max}(\mathcal E^\vee)$.

    Now, we show the second parts of \ref{L:min-slope-equals-negative-slope-of-dual-max2} and \ref{L:min-slope-equals-negative-slope-of-dual-max3}, that the infimum  in $\mu_{\alpha}^{\min}(\mathcal E)$ is actually a minimum, and that it is achieved by the last quotient of the HN filtration ($\mathcal E/HN_{\ell-1}\mathcal E$).  For brevity, we will write $\mathcal E_i:=HN_i\mathcal E$ in what follows.  
 Our first step is to show that 
 \begin{equation}\label{E:min-slope-equals-negative-slope-of-dual-max1}
\mu_\alpha(\mathcal E)\ge \mu_\alpha(\mathcal E/\mathcal E_{\ell-1}).
\end{equation}   
For this, we note that the HN filtration on $\mathcal E$ was constructed by pull-back from the HN filtration on $\mathcal E/\mathcal E_1$, and so applying \eqref{E:min-slope-equals-negative-slope-of-dual-max1} inductively to $\mathcal E/\mathcal E_1$ (where we are inducing on the rank), we may conclude that $\mu_\alpha(\mathcal E/\mathcal E_1)\ge \mu_\alpha(\mathcal E/\mathcal E_{\ell-1})$.    We also have by construction that $\mathcal E_1$ is the maximally destabilizing subsheaf of $\mathcal E$, so that $\mu_\alpha(\mathcal E_1)\ge \mu_\alpha(\mathcal E)$.  Now applying \Cref{L:SlpIncSlpInc} to the short exact sequence  $0\to \mathcal E_1\to \mathcal E\to \mathcal E/\mathcal E_1\to 0$, we may conclude that $\mu_\alpha(\mathcal E)\ge \mu_\alpha(\mathcal E/\mathcal E_1)$.  In conclusion we have $\mu_\alpha(\mathcal E)\ge \mu_\alpha(\mathcal E/\mathcal E_1)\ge \mu_\alpha(\mathcal E/\mathcal E_{\ell-1})$, establishing \eqref{E:min-slope-equals-negative-slope-of-dual-max1}.

    Now consider a torsion-free quotient $\mathcal E\to \mathcal Q\to 0$, which for the sake of contradiction we assume satisfies  $\mu_{\alpha}(\mathcal Q)<\mu_{\alpha}(\mathcal E/\mathcal E_{\ell-1})$.  Applying \eqref{E:min-slope-equals-negative-slope-of-dual-max1} to $\mathcal Q$, and taking the further quotient $\mathcal E\twoheadrightarrow \mathcal Q\twoheadrightarrow \mathcal Q/HN_{\max}\mathcal Q\to 0$, where $HN_{\max}\mathcal Q$ is the last step in the HN filtration for $\mathcal Q$, we may replace $\mathcal Q$ with $\mathcal Q/HN_{\max}\mathcal Q$, and assume that $\mathcal Q$ is $\alpha$-semi-stable. 
     Now, since we are assuming that $\mu_{\alpha}(\mathcal Q)<\mu_{\alpha}(\mathcal E/\mathcal E_{\ell-1})$, then we have $\mu_{\alpha}(\mathcal Q)<\mu_{\alpha}(\mathcal E_1)$. By \Cref{C:GKP16-2.17}\ref{E:C:GKP16-2.17(1)},  we get that $\mathcal E\ra \mathcal Q$ factors through a surjection $\mathcal E/\mathcal E_1\to \mathcal Q$. Applying this procedure repeatedly (i.e., the next step would be to consider the morphism $\mathcal E/\mathcal E_1\to \mathcal Q$, and then to conclude that the morphism is zero on $\mathcal E_2/\mathcal E_1$, so that it factors through a surjection $\mathcal E/\mathcal E_2 \to \mathcal Q$), we obtain that $\mathcal E/\mathcal E_{\ell-1}$ surjects onto $\mathcal Q$; but then again by \Cref{C:GKP16-2.17}\ref{E:C:GKP16-2.17(1)}, there is no such non-zero map, hence a contradiction and we are done.
\end{proof}
\else 
\begin{proof}
The proof is the same as in the case of varieties and is left to the reader.
\end{proof}
\fi 
%%%%%%%%%%%%%%%%%%%%%%%%%%%%%%%%%%%%%%%%%%%%%%%%% 

\begin{cor}\label{C:DualStab}
A coherent sheaf  $\mathcal E$  is $\alpha$-semi-stable if and only if $\mathcal E^{\vee}$ is $\alpha$-semi-stable.
\end{cor}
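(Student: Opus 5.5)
The plan is to deduce the corollary directly from \Cref{L:min-slope-equals-negative-slope-of-dual-max}, using the characterizations
\[
\mathcal E \text{ is } \alpha\text{-semi-stable} \iff \mu_\alpha^{\max}(\mathcal E)=\mu_\alpha(\mathcal E)\iff \mu_\alpha^{\min}(\mathcal E)=\mu_\alpha(\mathcal E).
\]
Throughout, $\mathcal E$ is a non-zero torsion-free coherent sheaf, since slope stability is only defined for these. The dual $\mathcal E^\vee$ is reflexive, hence torsion-free, and has the same rank as $\mathcal E$. Restricting to the open substack with complement of codimension at least two where $\mathcal E$ is locally free, and using \eqref{E:detVBU}, we get $c_1(\mathcal E^\vee)=-c_1(\mathcal E)$. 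Therefore $\mu_\alpha(\mathcal E^\vee)=-\mu_\alpha(\mathcal E)$.

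Suppose first that $\mathcal E$ is $\alpha$-semi-stable. Then $\mu_\alpha^{\min}(\mathcal E)=\mu_\alpha(\mathcal E)$. By \Cref{L:min-slope-equals-negative-slope-of-dual-max}\ref{L:min-slope-equals-negative-slope-of-dual-max1},
\[
\mu_\alpha^{\max}(\mathcal E^\vee)=-\mu_\alpha^{\min}(\mathcal E)=-\mu_\alpha(\mathcal E)=\mu_\alpha(\mathcal E^\vee),
\]
so $\mathcal E^\vee$ is $\alpha$-semi-stable.

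For the converse, suppose $\mathcal E^\vee$ is $\alpha$-semi-stable. Applying the first direction to $\mathcal E^\vee$ shows that $\mathcal E^{\vee\vee}$ is $\alpha$-semi-stable. It remains to pass from $\mathcal E^{\vee\vee}$ back to $\mathcal E$. The natural map $\mathcal E\hookrightarrow\mathcal E^{\vee\vee}$ is injective and an isomorphism in codimension one, so the two sheaves have the same determinant and rank, and hence $\mu_\alpha(\mathcal E)=\mu_\alpha(\mathcal E^{\vee\vee})$. Any subsheaf $\mathcal F\subseteq\mathcal E$ is also a subsheaf of $\mathcal E^{\vee\vee}$. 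It follows that
\[
\mu_\alpha^{\max}(\mathcal E)\le\mu_\alpha^{\max}(\mathcal E^{\vee\vee})=\mu_\alpha(\mathcal E^{\vee\vee})=\mu_\alpha(\mathcal E).
\]
The reverse inequality $\mu_\alpha^{\max}(\mathcal E)\ge\mu_\alpha(\mathcal E)$ is automatic, taking $\mathcal F=\mathcal E$. So $\mathcal E$ is semi-stable.

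Alternatively, the converse follows by combining the equality $\mu_\alpha^{\min}(\mathcal E)=\mu_\alpha^{\min}(\mathcal E^{\vee\vee})$, established in the proof of \Cref{L:min-slope-equals-negative-slope-of-dual-max}, with part \ref{L:min-slope-equals-negative-slope-of-dual-max1} applied to $\mathcal E^\vee$.

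I expect no real obstacle here. The only delicate point is the bookkeeping for non-reflexive $\mathcal E$, namely that dualizing twice does not change slopes or semi-stability. This is exactly the codimension-two comparison above, and it rests on $\det\mathcal E=\det\mathcal E^{\vee\vee}$.
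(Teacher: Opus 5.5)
Your proof is correct and takes essentially the paper's approach: both read the corollary off \Cref{L:min-slope-equals-negative-slope-of-dual-max}\ref{L:min-slope-equals-negative-slope-of-dual-max1}, together with $\mu_\alpha(\mathcal E^\vee)=-\mu_\alpha(\mathcal E)$ and the $\mu_\alpha^{\max}$/$\mu_\alpha^{\min}$ characterizations of semi-stability, and the paper simply writes this as one chain of equivalences. Your chain of equalities in the first direction is reversible, so the converse already follows from part (1) applied to $\mathcal E$ itself; the detour through $\mathcal E^{\vee\vee}$ is correct but unnecessary.
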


\begin{proof}
We have $\mathcal E$ is $\alpha$-semi-stable $\iff$  $\mu_\alpha^{\max}(\mathcal E)=\mu_\alpha(\mathcal E)$ $\iff$   $-\mu_\alpha^{\max}(\mathcal E)=-\mu_\alpha(\mathcal E)$ $\iff$  $\mu_\alpha^{\min}(\mathcal E^\vee)=\mu_\alpha(\mathcal E^\vee)$ $\iff$ $\mathcal E ^\vee$ is $\alpha$-semi-stable. 
\end{proof}

\subsection{Jordan--Holder filtrations} 
We now consider Jordan--Holder filtrations; our presentation here closely follows that of \cite[\S 1.5]{huybrechts_lehn_2010}.

\begin{lem}[Existence of a stable destabilising subsheaf]\label{L:stdes}
 If $\mathcal{E}$ is $\alpha$-semi-stable, there exists a saturated $\alpha$-stable subsheaf
  $\mathcal{E}' \subseteq \mathcal{E}$ of slope $\mu_{\alpha}(\mathcal{E}') = \mu_{\alpha}^{\max} (\mathcal{E})=\mu_\alpha(\mathcal E)$.
\end{lem}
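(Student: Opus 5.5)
The plan is the standard argument from \cite[Lem.~1.5.5]{huybrechts_lehn_2010}: choose $\mathcal E'$ minimal among all destabilizing saturated subsheaves. Consider the set $S$ of non-zero saturated subsheaves $\mathcal F\subseteq \mathcal E$ with $\mu_\alpha(\mathcal F)=\mu_\alpha(\mathcal E)$. This set is non-empty, since $\mathcal E\in S$. Because $\mathcal E$ is $\alpha$-semi-stable, $\mu_\alpha^{\max}(\mathcal E)=\mu_\alpha(\mathcal E)$, so every $\mathcal F\in S$ realizes the maximal slope. Choose $\mathcal E'\in S$ of minimal rank; this rank is at least $1$ because subsheaves of the torsion-free sheaf $\mathcal E$ are torsion-free. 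By construction $\mathcal E'$ is saturated and has the required slope, so it remains to show that $\mathcal E'$ is $\alpha$-stable.

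\emph{Semi-stability of $\mathcal E'$.} Any non-zero $\mathcal G\subseteq \mathcal E'$ is also a subsheaf of $\mathcal E$. Hence
\[
\mu_\alpha(\mathcal G)\le \mu_\alpha^{\max}(\mathcal E)=\mu_\alpha(\mathcal E').
\]

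\emph{Stability of $\mathcal E'$.} Suppose, for contradiction, that some $0\ne \mathcal G\subsetneq \mathcal E'$ with $\operatorname{rk}\mathcal G<\operatorname{rk}\mathcal E'$ satisfies $\mu_\alpha(\mathcal G)\ge\mu_\alpha(\mathcal E')$. The bound above then forces $\mu_\alpha(\mathcal G)=\mu_\alpha(\mathcal E)$. Let $\mathcal G^{\operatorname{sat}}$ be the saturation of $\mathcal G$ in $\mathcal E$. By \Cref{L:muSat}, which is where the movability of $\alpha$ enters, $\mu_\alpha(\mathcal G^{\operatorname{sat}})\ge \mu_\alpha(\mathcal G)=\mu_\alpha(\mathcal E)$. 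The reverse inequality holds by semi-stability of $\mathcal E$, so $\mathcal G^{\operatorname{sat}}$ has slope exactly $\mu_\alpha(\mathcal E)$. Since saturation preserves rank, $\mathcal G^{\operatorname{sat}}\in S$ and $\operatorname{rk}\mathcal G^{\operatorname{sat}}=\operatorname{rk}\mathcal G<\operatorname{rk}\mathcal E'$. This contradicts the minimality of the rank of $\mathcal E'$, so $\mathcal E'$ is $\alpha$-stable.

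The only point requiring care, and the expected obstacle, is making sure that passing to saturation keeps us inside $S$. A priori saturation could increase the slope beyond $\mu_\alpha(\mathcal E)$; this is ruled out because $\mathcal E$ is semi-stable, so \Cref{L:muSat} together with semi-stability pins the slope of $\mathcal G^{\operatorname{sat}}$ at exactly $\mu_\alpha(\mathcal E)$. Everything else is formal and identical to the variety case.
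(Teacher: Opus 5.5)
Your proof is correct and is essentially the paper's argument. The paper builds a strictly descending chain $\mathcal E=\mathcal E_1\supsetneq\mathcal E_2\supsetneq\cdots$ of saturated subsheaves of maximal slope and stops once the rank can no longer drop; this is the same as your choice of a minimal-rank element of $S$. Both use \Cref{L:muSat}, together with the semi-stability of $\mathcal E$ to pin the slope at $\mu_\alpha(\mathcal E)$, to keep the saturation inside the relevant set.
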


%%%%%%%%%%%%%%%%%%%%%%%%%%%%%%%%%%%%%%%%%%%%%%%%% 
\ifArxiv 
\begin{proof}
   If $\mathcal{E}_1:=\mathcal{E}$ is not $\alpha$-stable, there exists
  a sheaf $\mathcal{E}_2 \subsetneq \mathcal{E}_1$ that is also of maximal slope, but of smaller
  rank: $\operatorname{rk} \mathcal{E}_2 < \operatorname{rk}  \mathcal{E}_1$.  From \Cref{L:muSat}, one can assume that $\mathcal E_2$ is saturated.  Iterate this process, in order to
  construct a strictly decreasing sequence of saturated sheaves of maximal slope,
  $\mathcal{E}_1 \supsetneq \mathcal{E}_2 \supsetneq \cdots$.  The process terminates because
  the rank decreases in each step. 
\end{proof}
\else 
\begin{proof}
The proof is the same as in the case of varieties (e.g., \cite[\S 1.5]{huybrechts_lehn_2010}).
\end{proof}
\fi 
%%%%%%%%%%%%%%%%%%%%%%%%%%%%%%%%%%%%%%%%%%%%%%%%% 

\begin{cor}[Existence of Jördan-Hölder filtrations]\label{cor:JH}
  If $\mathcal{E}$ is $\alpha$-semi-stable, 
  there exists a \emph{Jordan--H\"older filtration} of $\mathcal E$, that is, a filtration
  $0 = \mathcal{E}_0 \subsetneq \mathcal{E}_1 \subsetneq \cdots \subsetneq \mathcal{E}_r = \mathcal{E}$ where
  each quotient $\mathcal{Q}_i := {\mathcal{E}_i}/{\mathcal{E}_{i-1}}$ is torsion-free, $\alpha$-stable,
  and with slopes $\mu_{\alpha} \bigl( \mathcal{Q}_i \bigr) = \mu_{\alpha}\bigl( \mathcal{E} \bigr)$. 
\end{cor}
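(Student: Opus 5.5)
The plan is to induct on the rank of $\mathcal E$, using \Cref{L:stdes} to peel off a stable piece at each step, exactly as in \cite[\S 1.5]{huybrechts_lehn_2010}.

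If $\mathcal E$ is $\alpha$-stable, take the trivial filtration $0\subsetneq \mathcal E$. By \Cref{C:st-ss}, this covers all $\mathcal E$ of rank $1$: any non-zero subsheaf of a rank-one torsion-free sheaf has full rank, so the stability condition is vacuous.

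For the inductive step, suppose $\mathcal E$ is $\alpha$-semi-stable but not $\alpha$-stable. Apply \Cref{L:stdes} to obtain a saturated $\alpha$-stable subsheaf $\mathcal E_1\subseteq \mathcal E$ with $\mu_\alpha(\mathcal E_1)=\mu_\alpha(\mathcal E)$. Since $\mathcal E$ is not stable, the construction in that proof actually produces $\mathcal E_1\subsetneq \mathcal E$ with $\operatorname{rk}\mathcal E_1<\operatorname{rk}\mathcal E$. Set $\mathcal Q:=\mathcal E/\mathcal E_1$. This sheaf is torsion-free by saturation and non-zero of smaller rank.

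Two properties of $\mathcal Q$ need checking.
\begin{enumerate}[label=(\roman*)]
\item By \Cref{L:SlpIncSlpInc}, the equality $\mu_\alpha(\mathcal E_1)=\mu_\alpha(\mathcal E)$ gives $\mu_\alpha(\mathcal Q)=\mu_\alpha(\mathcal E)$.
\item $\mathcal Q$ is $\alpha$-semi-stable. Let $\mathcal Q\to \mathcal Q'\to 0$ be any torsion-free quotient. Then it is also a torsion-free quotient of $\mathcal E$, so semi-stability of $\mathcal E$ gives $\mu_\alpha(\mathcal Q')\ge \mu_\alpha(\mathcal E)=\mu_\alpha(\mathcal Q)$. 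Hence $\mu_\alpha^{\min}(\mathcal Q)=\mu_\alpha(\mathcal Q)$, and $\mathcal Q$ is semi-stable by the discussion following \Cref{L:SlpIncSlpInc}.
\end{enumerate}

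By induction, $\mathcal Q$ has a Jordan--H\"older filtration $0=\mathcal Q_0\subsetneq\cdots\subsetneq \mathcal Q_{s}=\mathcal Q$ with torsion-free, $\alpha$-stable factors of slope $\mu_\alpha(\mathcal Q)=\mu_\alpha(\mathcal E)$. Let $\mathcal E_{i+1}$ be the preimage of $\mathcal Q_i$ in $\mathcal E$. Then $\mathcal E_{i+1}/\mathcal E_i\cong \mathcal Q_i/\mathcal Q_{i-1}$ for $i\ge1$, and $\mathcal E_1/\mathcal E_0=\mathcal E_1$ is torsion-free and stable of the correct slope. This gives the required filtration.

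The only point needing care is the strictness $\mathcal E_1\subsetneq\mathcal E$, which comes directly from the iteration in \Cref{L:stdes} started when $\mathcal E$ is not stable. Everything else is slope arithmetic, so I do not expect a real obstacle.
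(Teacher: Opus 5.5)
Your proof is correct and follows the paper's argument essentially verbatim: take the saturated stable subsheaf $\mathcal E_1$ from \Cref{L:stdes}, use \Cref{L:SlpIncSlpInc} to see that the torsion-free quotient $\mathcal E/\mathcal E_1$ has slope $\mu_\alpha(\mathcal E)$, note it is semi-stable because its torsion-free quotients are quotients of $\mathcal E$, then induct on rank and pull the filtration back. Your inequality $\mu_\alpha(\mathcal Q')\ge\mu_\alpha(\mathcal Q)$ in (ii) has the correct direction (the paper's displayed inequality has the sign reversed), and citing \Cref{C:st-ss} in your rank-one base case is unnecessary but harmless.
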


%%%%%%%%%%%%%%%%%%%%%%%%%%%%%%%%%%%%%%%%%%%%%%%%% 
\ifArxiv 
\begin{proof}
Let $\mathcal E_1\subseteq \mathcal E$ be a stable saturated subsheaf with slope $\mu_\alpha(\mathcal E_1)=\mu_\alpha(\mathcal E)$,  from \Cref{L:stdes}. If $\mathcal E_1\ne \mathcal E$, then consider the short exact sequence
$$
0\to \mathcal E_1\to \mathcal E \to \mathcal E/\mathcal E_1\to 0.
$$
As $\mathcal E_1$ is saturated and not equal to $\mathcal E$, it must be that $\operatorname{rk}(\mathcal E_1)<\operatorname{rk}(\mathcal E)$.  
 Using \Cref{L:SlpIncSlpInc}, and the fact that $\mu_\alpha(\mathcal E_1)=\mu_\alpha(\mathcal E)$, it is clear that $\mu_\alpha(\mathcal E/\mathcal E_1)=\mu_\alpha(\mathcal E)$.  We have that $\mathcal E/\mathcal E_1$ is torsion-free, and we claim that it is $\alpha$-semistable.
For this, any nontrivial  quotient  $\mathcal E/\mathcal E_1 \rightarrow \mathcal F''$ is also a non-trivial quotient of $\mathcal{E}$. Hence, $\mu_\alpha(\mathcal E/\mathcal E_1)=\mu_\alpha(\mathcal E)>\mu_\alpha(\mathcal F'')$ since $\mathcal{E}$ is $\alpha$-semistable.

Now, by induction on the rank, there is a Jordan--H\"older filtration for $\mathcal E/\mathcal E_1$.  Pulling back to $\mathcal E$ gives the rest of the Jordan--H\"older filtration for $\mathcal E$. 
\end{proof}
\else 
\begin{proof}
The proof is the same as in the case of varieties (e.g., \cite[\S 1.5]{huybrechts_lehn_2010}).
\end{proof}
\fi 
%%%%%%%%%%%%%%%%%%%%%%%%%%%%%%%%%%%%%%%%%%%%%%%%% 

\begin{rem}[Refined Harder--Narasimhan filtration]\label{rem:RHNF}
  Combining the Harder--Narasimhan filtration with the  
  Jordan--H\"older filtration, one obtains a \emph{refined
  Harder--Narasimhan filtration}
  $$0 = \mathcal{E}_0 \subsetneq \mathcal{E}_1 \subsetneq \cdots \subsetneq \mathcal{E}_r = \mathcal{E}$$ where
  each quotient $\mathcal{Q}_i := {\mathcal{E}_i}/{\mathcal{E}_{i-1}}$ is torsion-free and $\alpha$-stable,
  and where the sequence of slopes $\mu_{\alpha}\bigl( \mathcal{Q}_i \bigr)$ is decreasing
  (though not necessarily strictly decreasing), and the ranks are strictly increasing.  Note also that by construction, $\mu_\alpha(\mathcal E_1)=\mu_\alpha(\mathcal Q_1)=\mu_\alpha^{\max}(\mathcal E)$. 
\end{rem}

\subsection{$\alpha$-semi-stability and tensor products of vector bundles}

The goal of this section is to prove the following result generalizing  \cite[Prop.~5.2]{CPT11}: 

\begin{pro}

\label{P:TensorProdStab}
 Let $\mathcal X$ be a smooth proper DM stack over $\mathbb C$ with projective coarse moduli space, and let $\alpha$ be a movable curve class.  If $\mathcal E$ and $\mathcal E'$ are $\alpha$-semi-stable 
vector bundles on $\mathcal X$, then so is  $\mathcal E\otimes \mathcal E'$.
\end{pro}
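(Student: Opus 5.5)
The plan is to reduce to the case of smooth projective varieties via a finite flat cover, as the introduction indicates. Choose a finite flat morphism $q:V\to\mathcal X$ from a smooth projective variety $V$ (\S\ref{S:DM-intro}), and set $\alpha_V:=q^*\alpha$, which is movable by \Cref{L:mov-pullXX}\ref{L:mov-pullXX2}. The key intermediate statement, which the introduction calls \Cref{P:StabOnV}, is: \emph{a torsion-free sheaf (in particular a vector bundle) $\mathcal E$ on $\mathcal X$ is $\alpha$-semi-stable if and only if $q^*\mathcal E$ is $\alpha_V$-semi-stable on $V$.} Granting this, the proof is short. Since $\mathcal E,\mathcal E'$ are $\alpha$-semi-stable, $q^*\mathcal E$ and $q^*\mathcal E'$ are $\alpha_V$-semi-stable vector bundles on $V$. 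By the variety case \cite[Prop.~5.2]{CPT11}, $q^*\mathcal E\otimes q^*\mathcal E'=q^*(\mathcal E\otimes\mathcal E')$ is $\alpha_V$-semi-stable, and so $\mathcal E\otimes\mathcal E'$ is $\alpha$-semi-stable.

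For \Cref{P:StabOnV}, one direction is easy. Suppose $q^*\mathcal E$ is semi-stable and $\mathcal F\subseteq\mathcal E$. Then $q^*\mathcal F\subseteq q^*\mathcal E$ by flatness. Moreover $\det q^*\mathcal F=q^*\det\mathcal F$, and by the projection formula for the flat pullback (\Cref{R:Fpb=Npb}) we get $c_1(q^*\mathcal F)\cdot q^*\alpha=(\deg q)\, c_1(\mathcal F)\cdot\alpha$. Hence $\mu_{\alpha_V}(q^*-)=(\deg q)\,\mu_\alpha(-)$, and semi-stability descends.

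The converse is the main obstacle, since destabilizing subsheaves of $q^*\mathcal E$ need not descend when $q$ is not Galois. I would apply Rydh's theorem (\Cref{T:Rydh}) to obtain $\sigma:\mathcal X'=[\mathcal V'/G]\to\mathcal X$ together with $p:\mathcal V'\to V$. I would then argue in three steps.
\begin{enumerate}
\item On the Galois cover $r:\mathcal V'\to\mathcal X'$, the maximally destabilizing subsheaf of $r^*\sigma^*\mathcal E$ with respect to $r^*\sigma^*\alpha$ is unique (\Cref{P:GKP16-2.24}). It is therefore $G$-invariant and descends to $\mathcal X'$. Here $\alpha$ is pulled back via the numerical pull-back, which is functorial, and slopes scale by $|G|$.
\item Semi-stability of $\mathcal E$ with respect to $\alpha$ is equivalent to semi-stability of $\sigma^*\mathcal E$ with respect to $\sigma^*\alpha$. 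The map $\sigma$ is birational, and $\sigma_*$ identifies subsheaves generically. By \Cref{R:EdotMov0}, exceptional divisors pair to zero with $\sigma^*\alpha$, so determinants differ only by exceptional divisors, which do not affect slopes. By \Cref{L:muSat} we may compare saturations.
\item Compare $q^*\mathcal E$ on $V$ with its pullback to $\mathcal V'$. The map $p$ is generically finite, so pulling back a destabilizing subsheaf of $q^*\mathcal E$ to $\mathcal V'$ destabilizes, again using the projection formula for the numerical pull-back. Up to exceptional divisors, which are invisible to the pulled-back movable class by \Cref{R:EdotMov0}, slopes multiply by $\deg p$.
\end{enumerate}
Combining these: a destabilizing subsheaf of $q^*\mathcal E$ yields one on $\mathcal V'$, hence a $G$-invariant one, hence one on $\mathcal X'$, hence one for $\mathcal E$ on $\mathcal X$. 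Verifying these determinant comparisons on stacks, where care is needed with the root-stack components of $\sigma$ and with $\mathbb Q$-coefficients, is where I expect the real work to lie.
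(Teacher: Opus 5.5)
Your overall architecture matches the paper's. You reduce to \Cref{P:StabOnV} and get the easy direction by pulling back, as in \Cref{L:GFstab-1}. For the hard direction you combine four ingredients: Rydh's diagram (\Cref{T:Rydh}), Galois descent of the maximally destabilizing subsheaf (\Cref{L:Galstab}), a comparison along the stacky blow-up $\sigma$ (\Cref{L:StkyBUstab}), and pull-back along the generically finite map $\mathcal V'\to V$ (\Cref{L:GFstab-1}). Your steps 1 and 3 are fine. Running the chain forward rather than by contradiction makes no difference.

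The gap is in step 2, in the direction you need: a destabilizing subsheaf of $\sigma^*\mathcal E$ must yield one of $\mathcal E$, and this fails for the root-stack factors of $\sigma$. Your justification is that determinants differ only by exceptional divisors, which pair to zero with $\sigma^*\alpha$ by \Cref{R:EdotMov0}. That is correct for blow-ups. It is false for a root stack $\rho:\widetilde{\mathcal X}\to\mathcal X$ along $D$, which contracts no divisor. Writing $\rho^*D=r\mathcal D'$, one has $\rho_*[\mathcal D']=\tfrac1r[D]$, so $\mathcal D'\cdot\rho^*\alpha=\tfrac1r\,D\cdot\alpha$, which need not vanish. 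Push-forward followed by pull-back really can lower slopes. For example, for $0<k<r$ one has $\rho_*\bigl(\rho^*\mathcal E\otimes\mathcal O_{\widetilde{\mathcal X}}(-k\mathcal D')\bigr)=\mathcal E(-D)$. The pull-back of $\mathcal E(-D)$ has $\rho^*\alpha$-slope smaller by $\tfrac{r-k}{r}\,D\cdot\alpha$ than the subsheaf you started with.

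What is missing is the paper's descent argument, which runs as follows.
\begin{enumerate}
\item Use a \emph{saturated} destabilizing subsheaf $\mathcal F'\subseteq\rho^*\mathcal E$. This can be the maximally destabilizing one (\Cref{P:GKP16-2.24}), or the one produced by your step 1, which is saturated because torsion-freeness of the quotient descends along the \'etale cover $\mathcal V'\to\mathcal X'$.
\item Its quotient is torsion-free, so $\mathcal F'$ is a subbundle of the vector bundle $\rho^*\mathcal E$ away from codimension two, in particular at the generic point of $\mathcal D'$.
\item The $\mu_r$-stabilizers act trivially on the fibres of $\rho^*\mathcal E$, hence also on the fibres of the subbundle $\mathcal F'$.
\item By \Cref{R:VBdescentRS}, $\mathcal F'$ is the pull-back of $\rho_*\mathcal F'$ in codimension one, and \Cref{L:GFc1} then shows that the slopes agree.
\end{enumerate}
You correctly flag root stacks as where the work lies, and you mention passing to saturations. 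But the mechanism you actually propose for the root-stack case would not go through without this trivial-isotropy descent step.
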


The key step in the proof is to show the following, which allows us to reduce to the case of smooth projective varieties:

\begin{pro}\label{P:StabOnV}
 Let $\mathcal X$ be a smooth proper integral DM stack over $\mathbb C$ with projective coarse moduli space, let $q:V\to \mathcal X$ be a finite flat morphism from a smooth projective variety $V$, let $\alpha$ be a movable curve class on  $\mathcal X$, and let $\mathcal E$ be a vector bundle on $\mathcal X$.  Then $q^*\alpha$ is a movable curve class on $V$ (\Cref{L:mov-pullXX}\ref{L:mov-pullXX2}), and $\mathcal E$ is $\alpha$-semi-stable if and only if $q^*\mathcal E$ is $q^*\alpha$-semi-stable.  
\end{pro}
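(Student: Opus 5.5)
The easy direction is that semi-stability of $q^*\mathcal E$ implies semi-stability of $\mathcal E$. Given a subsheaf $\mathcal F\subseteq \mathcal E$, flatness of $q$ gives $q^*\mathcal F\subseteq q^*\mathcal E$. We have $\operatorname{rk}q^*\mathcal F=\operatorname{rk}\mathcal F$ and $\det q^*\mathcal F=q^*\det\mathcal F$. By the projection formula for the flat, hence numerical, pull-back (\Cref{R:Fpb=Npb} and \eqref{E:projformnumpull}), and since $q_*q^*\mathcal D=(\deg q)\mathcal D$, we get $c_1(q^*\mathcal F)\cdot q^*\alpha=(\deg q)\,c_1(\mathcal F)\cdot\alpha$. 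Thus $\mu_{q^*\alpha}(q^*-)=(\deg q)\mu_\alpha(-)$, and a destabilizing subsheaf of $\mathcal E$ pulls back to a destabilizing subsheaf of $q^*\mathcal E$.

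For the converse, suppose $\mathcal E$ is $\alpha$-semi-stable but $q^*\mathcal E$ is not. Let $\mathcal F_V\subseteq q^*\mathcal E$ be the maximally destabilizing subsheaf (\Cref{P:GKP16-2.24} applied on $V$). The plan is to descend $\mathcal F_V$, or a Galois-invariant version of it, to $\mathcal X$, using \Cref{T:Rydh}. That result gives a stacky blow-up $\sigma:\mathcal X'\to\mathcal X$, admissible over the étale locus $\mathcal U$ of $q$, and a Galois étale cover $\mathcal V'\to\mathcal X'=[\mathcal V'/G]$ factoring through $p:\mathcal V'\to V$. Set $\alpha'=\sigma^*\alpha$ and $\alpha_{\mathcal V'}=p^*q^*\alpha$; these are movable by \Cref{L:mov-pullXX}\ref{L:mov-pullXX2}, and by functoriality of the numerical pull-back (\Cref{R:Fpb=Npb}) $\alpha_{\mathcal V'}$ is also the pull-back of $\alpha'$ along the étale map.

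First I will check that maximal destabilizing subsheaves are compatible with the generically finite map $p$, i.e.\ that the saturation of $p^*\mathcal F_V$ is the maximal destabilizing subsheaf of $p^*q^*\mathcal E$. The slope comparison in one direction holds exactly as above, up to the factor $\deg p$, by the projection formula. The converse comparison, that $p^*q^*\mathcal E$ has no more destabilizing subsheaves than $q^*\mathcal E$, is the step that genuinely needs the pushforward argument. Given a subsheaf of $p^*q^*\mathcal E$, I will push it down generically (it is defined over the generic point), take its saturation in $q^*\mathcal E$, and compare determinants. The two determinants differ by divisors supported on the $p$-exceptional locus, and those pair to zero with $p^*q^*\alpha$ by \Cref{R:EdotMov0}, with saturation only raising slope (\Cref{L:muSat}).

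Uniqueness of the maximal destabilizing subsheaf (\Cref{P:GKP16-2.24}\ref{I:P:GKP16-2.24b}) then forces this subsheaf of $p^*q^*\mathcal E=(\text{étale pullback of }\sigma^*\mathcal E)$ to be $G$-invariant, since $G$ preserves both $p^*q^*\mathcal E$ and $\alpha_{\mathcal V'}$. It therefore descends to a saturated subsheaf $\mathcal F'\subseteq\sigma^*\mathcal E$ on $\mathcal X'$. Because the cover is étale, slopes scale by its degree, so $\mu_{\alpha'}(\mathcal F')>\mu_{\alpha'}(\sigma^*\mathcal E)$.

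Finally I push back down to $\mathcal X$. Let $\mathcal F:=(\sigma_*\mathcal F')\cap\mathcal E$, taken inside $\mathcal E$ via $\mathcal O_{\mathcal X}=\sigma_*\mathcal O_{\mathcal X'}$, so that $\mathcal F$ agrees with $\mathcal F'$ where $\sigma$ is an isomorphism. Then $\det\sigma^*\mathcal F$ and $\det\mathcal F'$ differ only by $\sigma$-exceptional divisors, which are killed by $\sigma^*\alpha$ (\Cref{R:EdotMov0}). The projection formula then gives $\mu_\alpha(\mathcal F)=\mu_{\alpha'}(\mathcal F')>\mu_{\alpha'}(\sigma^*\mathcal E)=\mu_\alpha(\mathcal E)$, contradicting the semi-stability of $\mathcal E$.

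The main obstacle is this bookkeeping of exceptional divisors through the stacky blow-ups and root stacks. Root stacks are not blow-ups, so the pushforward of divisors along them must be handled through the coarse spaces, where $\pi_*$ is an isomorphism on $\operatorname{N}^1$ and $\operatorname{N}_1$.
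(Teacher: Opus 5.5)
Your overall route is the paper's. After Rydh's diagram, you carry the instability of $q^*\mathcal E$ up along $p$ (\Cref{L:GFstab-1}), descend a $G$-invariant destabilizing subsheaf along the \'etale $G$-cover (\Cref{L:Galstab}), and push it down along $\sigma$ (\Cref{L:StkyBUstab}).

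Your ``compatibility'' step is argued incorrectly. The map $p$ is only generically finite, in general of degree $>1$. A subsheaf of $p^*q^*\mathcal E$ is therefore only determined over the generic point of $\mathcal V'$, and in general it does not come from any subsheaf of $q^*\mathcal E$, so there is nothing to ``push down generically.'' The step is also unnecessary. The easy direction already shows that $p^*q^*\mathcal E$ is not $p^*q^*\alpha$-semi-stable, so simply take the maximal destabilizing subsheaf of $p^*q^*\mathcal E$ itself. Uniqueness makes it $G$-invariant, whatever its relation to $\mathcal F_V$.

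The real gap is the push-down along $\sigma$ at root stacks, which you flag but do not resolve. Let $\mathcal G$ be the gerbe over a root divisor $\mathcal D$ of order $r$. Along $\mathcal G$ the map $\sigma$ is not an isomorphism, yet $\mathcal G$ is not contracted. Indeed $\sigma^*\mathcal D=r\mathcal G$ and $\sigma$ has degree one, so $\mathcal G\cdot\sigma^*\alpha=\tfrac1r\,\mathcal D\cdot\alpha$, which is in general positive. Hence \Cref{R:EdotMov0} does not apply, and passing to coarse spaces does not change this number.

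No formal argument absorbs the discrepancy either. Here $\sigma$ is flat and $\mathcal F'$ is saturated, so $\sigma^*\mathcal F\hookrightarrow\mathcal F'$ with torsion cokernel. By \Cref{L:detTors} this only yields $\mu_\alpha(\mathcal F)\le\mu_{\sigma^*\alpha}(\mathcal F')$, which is the wrong inequality.

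The missing idea is that there is no discrepancy along $\mathcal G$ at all. Off codimension two, $\mathcal F'$ is a subbundle of $\sigma^*\mathcal E$. The $\mu_r$-stabilizers act trivially on the fibers of $\sigma^*\mathcal E$, hence on those of $\mathcal F'$. So $\sigma^*\sigma_*\mathcal F'\to\mathcal F'$ is an isomorphism there (\Cref{R:VBdescentRS}), and $\mu_\alpha(\mathcal F)=\mu_{\sigma^*\alpha}(\mathcal F')$ exactly. This is the content of the root-stack case of \Cref{L:StkyBUstab}. With it, applied one blow-up or root stack at a time, your argument becomes the paper's.
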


Before we prove \Cref{P:StabOnV}, we explain how it is used to prove \Cref{P:TensorProdStab}.

\begin{proof}[Proof of \Cref{P:TensorProdStab}]
Suppose that $\mathcal E$ and $\mathcal E'$ are $\alpha$-semi-stable 
vector bundles on $\mathcal X$, and 
let $q:V\to \mathcal X$ be a finite flat morphism from a smooth projective variety $V$.  
Then by virtue of \Cref{P:StabOnV}, we have that  $q^*\mathcal E$ and $q^*\mathcal E'$ are $q^*\alpha$-semi-stable on $V$. Consequently, $q^*\mathcal E\otimes q^*\mathcal E'$ is $q^*\alpha$-semi-stable on $V$ by \cite[Thm.~4.2]{GKP16}.  Applying \Cref{P:StabOnV} to $\mathcal E\otimes \mathcal E'$, we see that $\mathcal E \otimes \mathcal E'$ is $\alpha$-semi-stable on $\mathcal X$.   
\end{proof}

\begin{rem}
We take a moment here to discuss the proof of \Cref{P:StabOnV}.  One direction, the proof that an unstable vector bundle pulls-back to an unstable vector bundle is relatively easy; one simply pulls back the destabilizing subsheaf (\Cref{L:GFstab-1}).  The complication is showing the converse,  that if a vector bundle pulls-back to an unstable vector bundle, then one can descend a destabilizing subsheaf, to show the original vector bundle was unstable.  One case where this is easy is for Galois covers \Cref{L:Galstab} (this is essentially equivalent to the argument in \cite[Prop.~2.6]{CPFol19}); however, unlike the case of projective varieties, where finite covers can be extended to Galois covers, in contrast, for DM stacks, one only knows that one  can achieve this after a stacky blow-up (via a result of Rydh \Cref{T:Rydh}; \cite[Thm.~C]{rydhComact}).  The main content of what follows is to explain how this suffices for the purpose of proving \Cref{P:StabOnV}.
\end{rem}

\begin{lem} \label{L:GFc1}
Let $q':\mathcal X'\to \mathcal X$ be a degree $d$ generically finite surjective  morphism of smooth proper integral DM stacks over $\mathbb C$ with projective coarse moduli spaces and let $\alpha$ be a movable curve class on $\mathcal X$.  Let  $\mathcal U\subseteq \mathcal X$ be the maximal open substack over which  $q':\mathcal X'\to \mathcal X$ is finite, and let  $\mathcal U'\subseteq q^{-1}(\mathcal U)$ be any open substack with complement of codimension $\ge 2$ in $q^{-1}(\mathcal U)$.   The class $q'^*\alpha$ is movable (\Cref{L:mov-pullXX}\ref{L:mov-pullXX2}) and we have: 

\begin{enumerate}[label=(\alph*)]
\item \label{L:GFc1-1}    If $\mathcal F'$ and $\mathcal G'$ are torsion-free coherent sheaves on $\mathcal X'$ such that $\mathcal F'|_{\mathcal U'}\cong \mathcal G'|_{\mathcal U'}$, then 
\begin{align}
c_1(\mathcal F')\cdot q'^*\alpha & =c_1(\mathcal G')\cdot q'^*\alpha    \label{E:GFc1FG}\\
\mu_{q'^*\alpha}(\mathcal F')&=\mu_{q'^*\alpha}(\mathcal G').
\end{align}

\item \label{L:GFc1-2} If $\mathcal F$ is a torsion-free coherent sheaf on $\mathcal X$, then for any torsion-free coherent sheaf  $\mathcal F'$ on $\mathcal X'$ such that $\mathcal F'|_{\mathcal U'}\cong q'^*(\mathcal F)|_{\mathcal U'}$, we have 
\begin{align}
c_1(\mathcal F)\cdot \alpha & = \frac{1}{d}(c_1(\mathcal F')\cdot q'^*\alpha ) \label{E:GFc11}\\
\mu_\alpha(\mathcal F)&= \frac{1}{d}\cdot \mu_{q'^*\alpha}(\mathcal F').
\end{align}

\end{enumerate}

\end{lem}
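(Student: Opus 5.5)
The plan is to reduce everything to a statement about determinants differing by divisors that are either supported in codimension $\ge 2$ on $q'^{-1}(\mathcal U)$ or contained in $\mathcal X'\setminus q'^{-1}(\mathcal U)$. Divisors of the second kind are exceptional for $q'$, i.e., contracted, and Remark~\ref{R:EdotMov0} shows they pair to zero with $q'^*\alpha$.

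For \ref{L:GFc1-1}, I first work on the complement $\mathcal Z'=\mathcal X'\setminus \mathcal U'$. Since $\mathcal F'$ and $\mathcal G'$ are torsion-free, there is an open $\mathcal W\subseteq\mathcal U'$, with complement of codimension $\ge 2$ in $\mathcal U'$, on which both sheaves are locally free. By \eqref{E:detVBU}, $\det\mathcal F'|_{\mathcal W}\cong\det\mathcal G'|_{\mathcal W}$. Thus $\det\mathcal F'\otimes(\det\mathcal G')^{-1}$ is trivial on $\mathcal W$. By \eqref{E:PicXU} and the $S_2$ property, it is then trivial on $\mathcal U'$, and also away from codimension $2$ in $q'^{-1}(\mathcal U)$. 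Consequently this line bundle is $\mathcal O_{\mathcal X'}(\mathcal D')$ for a Weil, hence $\mathbb Q$-Cartier (indeed Cartier, by smoothness) divisor $\mathcal D'$. This divisor is a $\mathbb Z$-combination of irreducible divisors contained in $\mathcal X'\setminus q'^{-1}(\mathcal U)$. To see this, take a rational section that is nowhere vanishing on the big open set; its divisor is supported in the complement.

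Next, I check that such divisors are $q'$-exceptional. The key point is that $\mathcal U$ is the maximal open over which $q'$ is finite. If an irreducible divisor $\mathcal D'_0\subseteq \mathcal X'\setminus q'^{-1}(\mathcal U)$ mapped onto a divisor of $\mathcal X$, then over the generic point of that image divisor the map would be quasi-finite. Since the map is also proper, it would be finite there, which contradicts maximality. Alternatively, I could pass to coarse spaces and use Zariski's main theorem; this is the step I expect needs the most care. Hence $q'_*\mathcal D'_0=0$, and Remark~\ref{R:EdotMov0} gives $\mathcal D'\cdot q'^*\alpha=0$. This yields \eqref{E:GFc1FG}. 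The slope equality follows because the ranks agree: the sheaves are isomorphic on a dense open.

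For \ref{L:GFc1-2}, part \ref{L:GFc1-1} lets me replace $\mathcal F'$ by the torsion-free sheaf $(q'^*\mathcal F)/\mathrm{tors}$, which agrees with $\mathcal F'$ on $\mathcal U'$. Choose an open $\mathcal V\subseteq\mathcal X$, with complement of codimension $\ge 2$, on which $\mathcal F$ is locally free. Then $q'^*\det\mathcal F$ and $\det\mathcal F'$ agree on $q'^{-1}(\mathcal V)\cap \mathcal U'$. By the same argument as above, they differ by divisors supported on $q'^{-1}(\mathcal X\setminus\mathcal V)$ together with the complement of $q'^{-1}(\mathcal U)$. Over $\mathcal U$ the map is finite, so $q'^{-1}(\mathcal X\setminus\mathcal V)\cap q'^{-1}(\mathcal U)$ has codimension $\ge 2$. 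The remaining divisorial components lie outside $q'^{-1}(\mathcal U)$ and are exceptional. Therefore
$c_1(\mathcal F')\cdot q'^*\alpha = c_1(q'^*\det\mathcal F)\cdot q'^*\alpha$.
By the projection formula \eqref{E:projformnumpull} and $q'_*q'^*\mathcal D = d\,\mathcal D$, this equals $d\,(c_1(\mathcal F)\cdot\alpha)$. Finally, $\operatorname{rk}\mathcal F'=\operatorname{rk}\mathcal F$, so the slope formula follows.
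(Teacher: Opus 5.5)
Your proposal is correct and follows essentially the same route as the paper. In both, $\det\mathcal F'$ and $\det\mathcal G'$ (resp.\ $\det\mathcal F'$ and $q'^*\det\mathcal F$) agree on a big open subset of $q'^{-1}(\mathcal U)$. They therefore differ by a divisor supported on $q'^{-1}(\mathcal X\setminus\mathcal U)$, which is contracted by $q'$ and so pairs to zero with $q'^*\alpha$ by \Cref{R:EdotMov0}; part (b) then follows from the projection formula and $q'_*q'^*\mathcal D=d\,\mathcal D$. The differences are only of emphasis: you justify why these divisors are contracted, which the paper simply asserts, while you take $q'_*q'^*\mathcal D=d\,\mathcal D$ as known, which the paper checks by passing to coarse spaces and restricting to a big open set over which the map is finite flat.
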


\begin{proof} \ref{L:GFc1-1} 
Let $\mathcal U''\subseteq \mathcal U'$ be the open substack of $\mathcal U'$  over which $\mathcal F$ and $\mathcal G$ are locally free.  Let  $\mathcal Z:=\mathcal X-\mathcal U$, and let  $\mathcal E'_i$ denote the irreducible components of the divisorial locus of $q'^{-1}(\mathcal Z)$.  Note that $q'_*c_1(\mathcal O_{\mathcal X'}(\mathcal E'_i))=0$ (to be clear, here we mean the push-forward in Chow).

Since $\bigwedge^{\operatorname{rk}\mathcal F'} (\mathcal F'|_{\mathcal U''})=\bigwedge^{\operatorname{rk}\mathcal G'} (\mathcal G'|_{\mathcal U''})$, we have that  
$$
(\det (\mathcal F'))\cong (\det (\mathcal G'))  \otimes \mathcal O_{\mathcal X'}(\sum a_i\mathcal E_i') 
$$
for some integers $a_i$.  Then, by virtue of the fact that $q_*c_1(\mathcal O_{\mathcal X'}(\mathcal E'_i))=0$ we can conclude (\Cref{R:EdotMov0}) that
$$
c_1(\mathcal F')\cdot q'^*\alpha = (c_1(\mathcal G')+c_1(\mathcal O_{\mathcal X'}(\sum a_i\mathcal E'_i))\cdot q'^*\alpha
=  c_1(\mathcal G')\cdot q'^*\alpha +c_1(\mathcal O_{\mathcal X'}(\sum a_i\mathcal E'_i))\cdot q'^*\alpha 
$$
$$
=  c_1(\mathcal G')\cdot q'^*\alpha +q'_*c_1(\mathcal O_{\mathcal X'}(\sum a_i\mathcal E'_i))\cdot \alpha  = c_1(\mathcal G')\cdot q'^*\alpha.
$$
The assertion in the lemma on slopes follows since $\operatorname{rk}\mathcal F'=\operatorname{rk}\mathcal G'$. 

\ref{L:GFc1-2}  Clearly we can apply \ref{L:GFc1-1} with $\mathcal G'=(q'^*\mathcal F)^{\vee\vee}$. We obtain  
$$
c_1(\mathcal F')\cdot q'^*\alpha = c_1((q'^*\mathcal F)^{\vee \vee})\cdot q'^*\alpha. 
$$
Applying \ref{L:GFc1-1} now to $q'^*(\det \mathcal F)$ and $\det ((q'^*\mathcal F)^{\vee \vee})$, we obtain 
$$
q'^*c_1(\mathcal F)\cdot q'^*\alpha = c_1((q'^*\mathcal F)^{\vee \vee})\cdot q'^*\alpha.
$$
Putting these together, we have 
$$
c_1(\mathcal F')\cdot q'^*\alpha = q'^*c_1(\mathcal F)\cdot q'^*\alpha = q'_*q'^*c_1(\mathcal F)\cdot \alpha = d \cdot c_1(\mathcal F)\cdot \alpha.
$$
where $d$ is the degree of $q'$.  

For lack of a concrete reference for the last equality on stacks, we include the proof here.  First, it suffices to show that for an effective Cartier divisor $\mathcal D$ on $\mathcal X$, one has $q'_*q'^*\mathcal D=d \mathcal D$.  
Moreover, working with $\mathbb Q$-divisors, then under the isomorphism $\pi_*:\operatorname{CH}^1(\mathcal X)_{\mathbb Q}\to \operatorname{CH}^1(X)_{\mathbb Q}$ to the Chow group of the coarse moduli space, it suffices to work with Cartier divisors on $X$. If we denote also by $q':X'\to X$ the morphism on coarse moduli spaces, then it suffices to show for a Cartier divisor $D$ on $X$, one has $q'_*q'^*D=dD$. 
One can check this equality on the open subset $U=\pi(\mathcal U)\subseteq X$, as it has complement of codimension $\ge 2$ in $X$. In fact, since $X$ is normal, it has singular locus of codimension $\ge 2$, and so we can replace $U$ with the smooth locus $U^\circ\subseteq U$. Restricting $U^\circ$ further if necessary, then using that $X'$ is also normal, we may assume that $q'^{-1}(U^\circ)$ is smooth. Then the morphism $q'|_{q'^{-1}(U^\circ)}:q'^{-1}(U^\circ)\to U^\circ$ is finite and flat. And so the formula follows from the standard statements on cycles on schemes  (e.g., \cite[\href{https://stacks.math.columbia.edu/tag/02RH}{Lem.~02RH}]{stacks-project}).

The assertion on slopes follows since $\operatorname{rk}\mathcal F=\operatorname{rk}\mathcal F'$. 
\end{proof}

Although we will not use it, we record  the following useful consequence for pseudo-effectivity, generalizing  \cite[Lem.~3.5]{CPFol19}: 
\begin{cor}

\label{C:CPL3.5}
Let $q':\mathcal X'\to \mathcal X$ be a degree $d$ generically finite surjective  morphism of smooth proper integral DM stacks over $\mathbb C$ with projective coarse moduli spaces and let $\alpha$ be a movable curve class on $\mathcal X$.  
 Let $\mathcal L$ be a line bundle on $\mathcal X$, and suppose that $\mathcal L'\cong q'^*\mathcal L \otimes \mathcal O_{\mathcal X'}(\mathcal E')$ for some  divisor $\mathcal E'$ supported in $q'^{-1}(\mathcal X-\mathcal U)$, where $\mathcal U$ is the locus over which $q'^{-1}$ is finite.   
 If $\mathcal L'$ is pseudo-effective, then so is $\mathcal L$. 
\end{cor}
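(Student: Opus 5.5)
The plan is to reduce to the coarse moduli spaces, where the statement is the known result for varieties \cite[Lem.~3.5]{CPFol19}, using the bookkeeping of \Cref{L:GFc1}. By \Cref{L:mov-pullXX}, pseudo-effectivity of $\mathcal L$ can be tested against movable classes. Since the pseudo-effective cone on $\mathcal X$ is closed, its dual (the movable cone) is the full dual cone. So it suffices to show that $c_1(\mathcal L)\cdot \beta\ge 0$ for every movable class $\beta$ on $\mathcal X$.

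For the main computation, I would fix an arbitrary movable class $\beta\in \operatorname{N}_1(\mathcal X)_{\mathbb R}$. By \Cref{L:mov-pullXX}\ref{L:mov-pullXX2}, the numerical pull-back $q'^*\beta$ is movable on $\mathcal X'$. Since $\mathcal L'$ is pseudo-effective, we get $c_1(\mathcal L')\cdot q'^*\beta\ge 0$. Next I would expand
$$
c_1(\mathcal L')\cdot q'^*\beta = c_1(q'^*\mathcal L)\cdot q'^*\beta + \mathcal E'\cdot q'^*\beta.
$$
Each irreducible component of $\mathcal E'$ lies in $q'^{-1}(\mathcal X-\mathcal U)$. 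Because $\mathcal U$ is the maximal locus over which $q'$ is finite, $\mathcal X-\mathcal U$ has codimension $\ge 2$ in $\mathcal X$. Hence every such divisor is contracted by $q'$, so $q'_*\mathcal E'=0$ in Chow, and \Cref{R:EdotMov0} gives $\mathcal E'\cdot q'^*\beta=0$. Finally, I would apply the projection formula \eqref{E:projformnumpull} together with $q'_*q'^*c_1(\mathcal L)=d\,c_1(\mathcal L)$. This identity is exactly what was checked at the end of the proof of \Cref{L:GFc1}\ref{L:GFc1-2}, by passing to coarse spaces and restricting to the smooth locus over which $q'$ is finite and flat. Together these give $c_1(q'^*\mathcal L)\cdot q'^*\beta=d\,c_1(\mathcal L)\cdot\beta$. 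Therefore $c_1(\mathcal L)\cdot \beta\ge 0$ for all movable $\beta$.

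The step I expect to be the main obstacle is the duality statement: that a divisor class pairing non-negatively with the whole movable cone is pseudo-effective. This is the bidual of a closed convex cone, so it holds formally once we know $\operatorname{N}^1$ and $\operatorname{N}_1$ are dual finite-dimensional spaces via \eqref{E:BilIntPairXX}, and that the pseudo-effective cone is closed. Both facts follow by transporting through $\pi_*$ to $X$, where they are standard. A subtler point is that \Cref{L:mov-pullXX} only needs a \emph{numerical} pull-back, so no flatness of $q'$ is required. Alternatively, one could avoid duality entirely by pushing forward: $\pi'$ and $\pi$ reduce everything to $X'\to X$, where $d\,\mathcal L\equiv q'_*\mathcal L'$ numerically, since $q'_*\mathcal E'=0$. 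Pushforwards of pseudo-effective classes under proper surjective maps are pseudo-effective, being limits of effective ones. I would likely present this second argument, since it is shorter.
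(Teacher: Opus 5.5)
Your proof is correct, and the computation in your second paragraph is the paper's argument. The paper simply applies \eqref{E:GFc11} of \Cref{L:GFc1} with $\mathcal F=\mathcal L$ and $\mathcal F'=\mathcal L'$, which is legitimate because $\mathcal L'$ and $q'^*\mathcal L$ agree over $q'^{-1}(\mathcal U)$. Your splitting off of $\mathcal E'\cdot q'^*\beta=0$, together with $q'_*q'^*c_1(\mathcal L)=d\,c_1(\mathcal L)$, is exactly how that formula is proved, unpacked in this special case.

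The paper leaves two points implicit: that one must range over all movable $\beta$, not just the fixed $\alpha$ of the statement, and that a class pairing non-negatively with every movable class is pseudo-effective. You make these explicit via the bidual of the closed pseudo-effective cone, which is the right justification. That duality is not a consequence of \Cref{L:mov-pullXX}, though, as your first sentence suggests.

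The pushforward argument you say you would actually present is a genuinely different route, and it is dual to the first. By \eqref{E:projformnumpull}, the map $q'_*:\operatorname{N}^1(\mathcal X')_{\mathbb R}\to \operatorname{N}^1(\mathcal X)_{\mathbb R}$ is the transpose of the numerical pull-back. So ``pushforward preserves pseudo-effectivity'' is the dual form of \Cref{L:mov-pullXX}\ref{L:mov-pullXX2}, whose proof uses exactly that $q'_*$ of an effective divisor is effective. Using it directly with $q'_*\mathcal L'\equiv d\,\mathcal L$ bypasses movable classes and the bidual theorem entirely. In exchange it needs $q'_*$ to be well defined and linear on numerical classes, which the paper asserts when setting up the numerical pull-back. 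It also needs $q'_*$ to send effective divisors to effective ones, so that limits of effective classes go to limits of effective classes.

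The paper's route reuses \Cref{L:GFc1} verbatim, which is why the corollary is placed right after it. Yours is more self-contained and sidesteps the only step that the paper's one-line proof leaves unjustified.
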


\begin{proof}
This follows immediately from \eqref{E:GFc11} of \Cref{L:GFc1}, taking $\mathcal F=\mathcal L$ and $\mathcal F'=\mathcal L'$.  
\end{proof}

We now prove some lemmas that will allow us to prove  \Cref{P:TensorProdStab}.

\begin{lem}[Generically finite pull-back] \label{L:GFstab-1}
Let $q':\mathcal X'\to \mathcal X$ be a generically finite surjective  morphism of smooth proper integral DM stacks over $\mathbb C$ with projective coarse moduli spaces and let $\alpha$ be a movable curve class on $\mathcal X$.    
Then if a torsion-free coherent sheaf  $\mathcal E$ on $\mathcal X$ is not $\alpha$-semi-stable, then $(q'^*\mathcal E)^{\vee \vee}$ is not $q'^*\alpha$-semi-stable on $\mathcal X'$. 
\end{lem}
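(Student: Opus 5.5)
The plan is to pull back a destabilizing subsheaf and compare slopes using \Cref{L:GFc1}\ref{L:GFc1-2}.

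Since $\mathcal E$ is not $\alpha$-semi-stable, there is a nonzero subsheaf $\mathcal F\subseteq \mathcal E$ with $\mu_\alpha(\mathcal F)>\mu_\alpha(\mathcal E)$. By \Cref{L:muSat} we may take $\mathcal F$ to be saturated, so $\mathcal F$ is torsion-free and $\mathcal E/\mathcal F$ is torsion-free. Let $d$ be the degree of $q'$, and let $\mathcal U\subseteq \mathcal X$ be the maximal open substack over which $q'$ is finite. By miracle flatness, $q'$ is flat over $\mathcal U$: the source and target are smooth and $q'$ is quasi-finite there.

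Next, define $\mathcal F'$ as the image of $(q'^*\mathcal F)^{\vee\vee}\to (q'^*\mathcal E)^{\vee\vee}$. Equivalently, $\mathcal F'$ is the saturation in $(q'^*\mathcal E)^{\vee\vee}$ of the image of $q'^*\mathcal F$. Either choice is a nonzero torsion-free subsheaf of $(q'^*\mathcal E)^{\vee\vee}$.

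We then choose an open $\mathcal U'\subseteq q'^{-1}(\mathcal U)$ with complement of codimension $\ge 2$ in $q'^{-1}(\mathcal U)$, on which the following identifications hold:
\begin{itemize}
\item $q'^*\mathcal F\to q'^*\mathcal E$ is injective, because $q'$ is flat over $\mathcal U$;
\item $q'^*\mathcal F$ and $q'^*\mathcal E$ are locally free there, after removing a codimension $\ge2$ locus;
\item consequently $\mathcal F'|_{\mathcal U'}\cong q'^*\mathcal F|_{\mathcal U'}$ and $(q'^*\mathcal E)^{\vee\vee}|_{\mathcal U'}\cong q'^*\mathcal E|_{\mathcal U'}$.
\end{itemize}
To arrange this, note that $\mathcal F$ and $\mathcal E$ are locally free on an open of $\mathcal X$ with complement of codimension $\ge2$. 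Flat pullback preserves codimension, so this gives the needed codimension bound on $\mathcal U'$.

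Applying \Cref{L:GFc1}\ref{L:GFc1-2} twice, once to $\mathcal F$ and once to $\mathcal E$, gives
\[
\mu_{q'^*\alpha}(\mathcal F')=d\,\mu_\alpha(\mathcal F)>d\,\mu_\alpha(\mathcal E)=\mu_{q'^*\alpha}\bigl((q'^*\mathcal E)^{\vee\vee}\bigr).
\]
Thus $\mathcal F'$ destabilizes $(q'^*\mathcal E)^{\vee\vee}$ with respect to the movable class $q'^*\alpha$ (\Cref{L:mov-pullXX}\ref{L:mov-pullXX2}), which proves the statement.

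The main obstacle is the bookkeeping behind the isomorphisms on $\mathcal U'$. Concretely, one must check three things: that $\mathcal U'$ can be chosen with complement of codimension $\ge2$ inside $q'^{-1}(\mathcal U)$ (not merely inside $\mathcal X'$), that flatness over $\mathcal U$ indeed holds, and that the pulled-back map is injective there. Everything else is absorbed by \Cref{L:GFc1}, which already discards the contribution of the divisors over $\mathcal X-\mathcal U$ via \Cref{R:EdotMov0}.
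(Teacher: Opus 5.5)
Your proposal is correct and is essentially the paper's argument: pull back a destabilizing $\mathcal F\subseteq\mathcal E$, take $\mathcal F'$ to be the image of $(q'^*\mathcal F)^{\vee\vee}$ in $(q'^*\mathcal E)^{\vee\vee}$, and compare slopes via \Cref{L:GFc1}\ref{L:GFc1-2}. The differences are cosmetic. The paper skips saturation and miracle flatness by working over the locus where $\mathcal F$ and $\mathcal E$ are locally free, where $q'^*\mathcal F\to q'^*\mathcal E$ is a generically injective map of vector bundles and hence injective. Also, your ``equivalently'' is not literally true, since the image of $(q'^*\mathcal F)^{\vee\vee}$ can fail to be saturated along divisors over $\mathcal X-\mathcal U$; this is harmless, because the two candidates agree on $\mathcal U'$ and so have the same $q'^*\alpha$-slope.
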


\begin{proof}
Suppose that $\mathcal F\subseteq \mathcal E$ is a coherent subsheaf of $\mathcal E$ such that $\mu_\alpha(\mathcal F)>\mu_\alpha(\mathcal E)$.  Let $\mathcal U\subseteq \mathcal X$ be the locus over which both $\mathcal F$ and $\mathcal E$ are locally free.  We have a morphism  $q'^*\mathcal F\to q'^*\mathcal E$, which induces a morphism $(q'^*\mathcal F)^{\vee \vee}\to (q'^*\mathcal E)^{\vee\vee}$; since this is generically injective and the sheaves are torsion-free, it is an inclusion.  
 Let $\mathcal F'\subseteq (q'^*\mathcal E)^{\vee \vee}$ be the image 
 of $(q'^*\mathcal F)^{\vee \vee}$ under this morphism.  
We claim that $\mu_{q'^*\alpha}(\mathcal F')>\mu_{q'^*\alpha}((q'^*\mathcal E)^{\vee \vee})$.  
Indeed, since $\mathcal F'|_{q'^{-1}(\mathcal U)}\cong q'^*(\mathcal F|_{\mathcal U})$, we have from \Cref{L:GFc1} that  
$\mu_\alpha(\mathcal F)= \frac{1}{d}\cdot \mu_{q'^*\alpha}(\mathcal F')$, where $d$ is the degree of $q'$. 
The same argument shows that $
\mu_\alpha(\mathcal E)= \frac{1}{d}\cdot \mu_{q'^*\alpha}((q'^*\mathcal E)^{\vee \vee})$.
\end{proof}

For Galois covers, we have a converse, whose proof  is essentially equivalent to the argument in \cite[Prop.~2.6]{CPFol19}: 

\begin{lem}[Galois covers]\label{L:Galstab}
Let $q':\mathcal V'\to \mathcal X$ be a finite flat morphism of smooth proper integral DM stacks over $\mathbb C$ with projective coarse moduli spaces and let $\alpha$ be a movable curve class on $\mathcal X$.   
Suppose there is a finite group $G$ acting on $\mathcal V'$ such that the morphism $q':\mathcal V'\to \mathcal X$ induces an isomorphism   $\mathcal X\cong [\mathcal V'/G]$.  Then  a torsion-free coherent sheaf  $\mathcal E$ on $\mathcal X$ is $\alpha$-semi-stable if and only if $q'^*\mathcal E$ is $q'^*\alpha$-semi-stable. 
\end{lem}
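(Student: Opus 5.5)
One direction is already available: if $\mathcal E$ is not $\alpha$-semi-stable, then \Cref{L:GFstab-1} applied to the finite flat (hence generically finite surjective) morphism $q'$ shows that $(q'^*\mathcal E)^{\vee\vee}$ is not $q'^*\alpha$-semi-stable. Since $q'$ is flat, $q'^*\mathcal E$ is torsion-free. It agrees with its reflexive hull away from codimension two, so by \Cref{L:GFc1}\ref{L:GFc1-1} the two sheaves have the same slope. Any destabilizing subsheaf of the hull intersects $q'^*\mathcal E$ in a subsheaf that agrees with it in codimension one, and hence has the same slope. So $q'^*\mathcal E$ is not semi-stable either. The substance of the proof is therefore the converse: if $q'^*\mathcal E$ is not $q'^*\alpha$-semi-stable, we must descend a destabilizing subsheaf to $\mathcal X$.

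For the converse, let $\mathcal F'\subseteq q'^*\mathcal E$ be the maximally destabilizing subsheaf given by \Cref{P:GKP16-2.24}, taken with respect to the movable class $q'^*\alpha$. Movability of $q'^*\alpha$ comes from \Cref{L:mov-pullXX}. First we show that $q'^*\alpha$ is $G$-invariant. Since $q'\circ g=q'$ for every $g\in G$, functoriality of numerical pull-back (\Cref{R:Fpb=Npb}) gives $g^*q'^*\alpha=q'^*\alpha$. Each $g$ is an automorphism, so for any torsion-free $\mathcal G$ we get $\mu_{q'^*\alpha}(g^*\mathcal G)=\mu_{q'^*\alpha}(\mathcal G)$; here one uses $g^*c_1(\mathcal G)\cdot q'^*\alpha = c_1(\mathcal G)\cdot g_*q'^*\alpha$ together with $g_*=(g^{-1})^*$. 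The $G$-equivariant structure on $q'^*\mathcal E$ carries $g^*\mathcal F'$ to a subsheaf of $q'^*\mathcal E$ of maximal slope. By the uniqueness part \ref{I:P:GKP16-2.24b} of \Cref{P:GKP16-2.24}, this subsheaf equals $\mathcal F'$. Hence $\mathcal F'$ is a $G$-stable subsheaf, i.e.\ a $G$-equivariant subsheaf of the equivariant sheaf $q'^*\mathcal E$.

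Next we descend. Since $\mathcal X\cong[\mathcal V'/G]$ and $q'$ is the quotient map (which is \'etale), $G$-equivariant coherent sheaves on $\mathcal V'$ are the same as coherent sheaves on $\mathcal X$. Hence there is a subsheaf $\mathcal F\subseteq\mathcal E$ with $q'^*\mathcal F=\mathcal F'$, and $\mathcal F$ is torsion-free because $\mathcal F'$ is. The key verification is that this descent is compatible with subsheaves: a $G$-stable subsheaf of an equivariant sheaf inherits the equivariant structure, and pull-back along the faithfully flat map $q'$ preserves injectivity. Finally, \Cref{L:GFc1}\ref{L:GFc1-2} applies with $\mathcal U'=\mathcal V'$, since $q'$ is finite. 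It gives $\mu_\alpha(\mathcal F)=\frac1d\mu_{q'^*\alpha}(\mathcal F')$, and likewise for $\mathcal E$. So $\mu_{q'^*\alpha}(\mathcal F')>\mu_{q'^*\alpha}(q'^*\mathcal E)$ implies $\mu_\alpha(\mathcal F)>\mu_\alpha(\mathcal E)$, and $\mathcal E$ is not $\alpha$-semi-stable.

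I expect the main obstacle to be the invariance step, $g^*\mathcal F'=\mathcal F'$ as subsheaves. This needs two things. First, the equivariant isomorphisms $\phi_g\colon g^*q'^*\mathcal E\cong q'^*\mathcal E$ must be used so that $g^*\mathcal F'$ is regarded as a subsheaf of $q'^*\mathcal E$ itself. Second, the slope must be invariant, which is where $g^*$ acting trivially on $q'^*\alpha$ enters. Once uniqueness of the maximally destabilizing subsheaf is invoked, the cocycle compatibility of the descent datum is automatic, because the restricted $\phi_g$ are restrictions of an honest descent datum.
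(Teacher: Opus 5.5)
Your proposal is correct and follows the paper's proof essentially verbatim: the forward direction via \Cref{L:GFstab-1}, and for the converse, $G$-invariance of $q'^*\alpha$ and uniqueness show the maximally destabilizing subsheaf of $q'^*\mathcal E$ is $G$-stable, so it descends to some $\mathcal F\subseteq\mathcal E$, and \Cref{L:GFc1}\ref{L:GFc1-2} compares the slopes. Your comparison of $q'^*\mathcal E$ with its reflexive hull in the forward direction is a bit of care the paper skips; also note that part \ref{I:P:GKP16-2.24b} by itself only gives $\phi_g(g^*\mathcal F')\subseteq\mathcal F'$, and equality follows because both subsheaves are saturated and have the same rank.
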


\begin{proof}
If $\mathcal E$ is not $\alpha$-semi-stable, then 
 \Cref{L:GFstab-1} implies that $q'^*\mathcal E$ is not $q'^*\alpha$-semi-stable. 
 For the other direction, assume that $q'^*\mathcal E$ is not $q'^*\alpha$-semi-stable.   Let $\mathcal F'\subseteq q'^*\mathcal E$ be the maximally destabilizing subsheaf (\Cref{P:GKP16-2.24}).  As $\mathcal F'$ is unique, and $q'^*\mathcal E$ is $G$-equivariant, it follows that $\mathcal F'$ is also $G$-equivariant, and so descends to $\mathcal X$.

 To see this, first observe that $q'^*\alpha$ is $G$-invariant.  Therefore, for each $g\in G$, and any torsion-free coherent sheaf $\mathcal G$ on $\mathcal V'$,
 we have $$c_1(g^*\mathcal G)\cdot q'^*\alpha= c_1(g^*\mathcal G)\cdot g^*(q'^*\alpha)=g^*c_1(\mathcal G)\cdot g^*(q'^*\alpha)= c_1(\mathcal G)\cdot q'^*\alpha.$$ 
From this, one can easily check that for each $g\in G$ one has that  $g^*\mathcal F'$ is the maximally destabilizing subsheaf of $g^*q'^*\mathcal E$.
Now, since $q'^*\mathcal E$ is equivariant, we have that for each $g\in G$ we have an isomorphism $\theta_g:g^*q'^*\mathcal E\to q'^* \mathcal E$.  The $q'^*\alpha$-slope of a sheaf is the same for isomorphic sheaves; therefore since maximally destabilizing subsheaves are unique, it must be that $\theta_g$ takes the maximally destabilizing subsheaf, $g^*\mathcal F'$,  of $g^*q'^*\mathcal E$ isomorphically to the maximally destabilizing subsheaf, $\mathcal F'$,  of $q'^*\mathcal E$.  From this one can check that  $\mathcal F'$ is equivariant, and consequently descends to $\mathcal X$; i.e.,   $\mathcal F'\cong q'^*\mathcal F$ for some subsheaf $\mathcal F\subseteq \mathcal E$ on $\mathcal X$. 

Having established that $\mathcal F'$ descends to $\mathcal F$, then by virtue of  \Cref{L:GFc1}\ref{L:GFc1-2}, we see that $$\mu_\alpha(\mathcal F)=\frac{1}{|G|}\mu_{q'^*\alpha}(q'^*\mathcal F)=\frac{1}{|G|}\mu_{q'^*\alpha}(\mathcal F')>\frac{1}{|G|}\mu_{q'^*\alpha}(q'^*\mathcal E)=\mu_\alpha(\mathcal E).$$
 Therefore, $\mathcal E$ is not $\alpha$-semi-stable. 
\end{proof}

One approach to completing the proof of \Cref{P:StabOnV} at this point could be to make an argument similar to \Cref{L:Galstab}, but via faithfully flat descent.  
However, since we will want the following lemma for stacky blow-ups elsewhere,  we prove that first, and then use it to complete the proof of  \Cref{P:StabOnV}.

\begin{lem}[Stacky blow-ups]\label{L:StkyBUstab}
Let $\sigma:\mathcal X'\to \mathcal X$ be a stacky blow-up of a smooth proper integral DM stack over $\mathbb C$ with projective coarse moduli space  and let $\alpha$ be a movable curve class on $\mathcal X$.   A torsion-free coherent sheaf   $\mathcal E$ on $\mathcal X$ is $\alpha$-semi-stable if and only if 
$(\sigma^*\mathcal E)^{\vee \vee}$ is $\sigma^*\alpha$-semi-stable.  
\end{lem}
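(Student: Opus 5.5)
One direction is immediate from \Cref{L:GFstab-1}. A stacky blow-up $\sigma$ is a composition of blow-ups and root stacks, hence proper, birational on coarse spaces, generically finite and surjective (of degree $1$, or at worst of degree $1$ on coarse spaces). So if $\mathcal E$ is not $\alpha$-semi-stable, then $(\sigma^*\mathcal E)^{\vee\vee}$ is not $\sigma^*\alpha$-semi-stable. The content is the converse. Suppose $\mathcal E$ is $\alpha$-semi-stable and $\mathcal F'\subseteq(\sigma^*\mathcal E)^{\vee\vee}$ satisfies $\mu_{\sigma^*\alpha}(\mathcal F')>\mu_{\sigma^*\alpha}((\sigma^*\mathcal E)^{\vee\vee})$. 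By \Cref{L:muSat} we may take $\mathcal F'$ saturated. The plan is to push $\mathcal F'$ forward and produce a subsheaf of $\mathcal E$ with the same slope after the $1/d$ rescaling of \Cref{L:GFc1}.

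Let $\mathcal U\subseteq\mathcal X$ be the open substack over which $\sigma$ is an isomorphism. Since $\sigma$ is a blow-up composed with root stacks along centres, I will first check that $\sigma$ is finite (indeed quasi-finite and proper) over an open $\mathcal U_0\supseteq\mathcal U$ whose complement has codimension $\ge 2$. Root stacks along divisors are finite coarse-homeomorphisms, and blow-up centres can be taken of codimension $\ge2$. Over $\mathcal U_0$, then, $\sigma$ is a finite birational-on-coarse morphism, locally of the form $[\operatorname{Spec}\mathcal O[t]/(t^r-s)/\mu_r]\to \operatorname{Spec}\mathcal O$.

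Define $\mathcal F:=\mathcal E\cap(\sigma_*\mathcal F')$, computed inside $\sigma_*(\sigma^*\mathcal E)^{\vee\vee}$. Using the remark that $\mathcal O_{\mathcal X}\to\sigma_*\mathcal O_{\mathcal X'}$ is an isomorphism, and projection formula on the locus where $\mathcal E$ is locally free, we get $\mathcal E\hookrightarrow\sigma_*(\sigma^*\mathcal E)^{\vee\vee}$, and generically this is an isomorphism. Thus $\mathcal F\subseteq\mathcal E$ is a subsheaf of rank $\operatorname{rk}\mathcal F'$. The key comparison is that $(\sigma^*\mathcal F)^{\vee\vee}$ and $\mathcal F'$ agree over $\sigma^{-1}(\mathcal U)$, because $\sigma$ is an isomorphism there. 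I then want to apply \Cref{L:GFc1}\ref{L:GFc1-2} with this $\mathcal F'$ and with $\mathcal U'$ taken to be the preimage of the codimension-$\ge2$ locus.

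The main obstacle is that \Cref{L:GFc1} requires agreement on an open $\mathcal U'$ whose complement has codimension $\ge2$ in $\sigma^{-1}(\mathcal U_0)$. However, $\sigma^{-1}(\mathcal U_0)\setminus\sigma^{-1}(\mathcal U)$ may contain divisors, namely the gerbes over the root divisors. I would handle these divisors directly. On the root stack over a divisor $\mathcal D$, the sheaves $\mathcal F'$ and $(\sigma^*\mathcal F)^{\vee\vee}$ agree away from the gerbe $\mathcal D'$, so their determinants differ by $\mathcal O(a\mathcal D')$ with $a\in\mathbb Z$. Then $\mathcal D'\cdot\sigma^*\alpha=\sigma_*\mathcal D'\cdot\alpha=\tfrac1r\mathcal D\cdot\alpha$, so this correction term does not vanish. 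Instead I will bound its sign. The inclusion $(\sigma^*\mathcal F)^{\vee\vee}\to\mathcal F'$ is generically an isomorphism with torsion cokernel, so \Cref{L:detTors} gives $a\ge0$, which is the wrong direction for the argument.

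To get the needed inequality, I would use instead that $\mathcal F'\subseteq\sigma^*\sigma_*\mathcal F'$ near $\mathcal D'$, using \Cref{R:VBdescentRS}-type descent for $\mu_r$-invariant parts. This shows that $\det\mathcal F'$ is at most $\det(\sigma^*\mathcal F)^{\vee\vee}$ twisted by the divisorial part contracted by $\sigma$. Concretely, $\sigma_*$ of the $\mu_r$-eigensheaf decomposition loses only the non-invariant eigenspaces, and these have nonpositive degree contribution. Granting this, $c_1(\mathcal F)\cdot\alpha\ge c_1(\mathcal F')\cdot\sigma^*\alpha$ (with $d=1$), and likewise there is equality for $\mathcal E$ itself by \Cref{L:GFc1}. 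Hence $\mu_\alpha(\mathcal F)\ge\mu_{\sigma^*\alpha}(\mathcal F')>\mu_\alpha(\mathcal E)$, a contradiction. Exceptional divisors of genuine blow-ups contribute nothing, by \Cref{R:EdotMov0}.
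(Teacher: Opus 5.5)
Your outline matches the paper's (push the destabilizing subsheaf forward, intersect with $\mathcal E$, compare slopes via \Cref{L:GFc1}). You also correctly locate the real difficulty: for a root stack the gerbe $\mathcal D'$ is not contracted and $\mathcal D'\cdot\sigma^*\alpha=\frac1r\,\mathcal D\cdot\alpha$, so the discrepancy $\mathcal O(a\mathcal D')$ between $\det\mathcal F'$ and $\det(\sigma^*\mathcal F)^{\vee\vee}$ matters. But the step that is supposed to eliminate it is only ``granted'', and the justification you sketch runs the wrong way.

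Pushing forward a piece with nontrivial $\mu_r$-character loses slope rather than gaining it. Take $\mathcal E=\mathcal O_{\mathcal X}$ and $\mathcal F'=\mathcal O_{\mathcal X'}(-k\mathcal D')$ with $0<k<r$. The local model $[\operatorname{Spec}A[t]/(t^r-s)/\mu_r]$ gives $\sigma_*\mathcal F'=\mathcal O_{\mathcal X}(-\mathcal D)=\mathcal F$. Then $c_1(\mathcal F)\cdot\alpha=-\mathcal D\cdot\alpha\le-\frac kr\,\mathcal D\cdot\alpha=c_1(\mathcal F')\cdot\sigma^*\alpha$, since $\alpha$ is movable. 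So for a general subsheaf only your $a\ge 0$ direction holds, and the inequality $c_1(\mathcal F)\cdot\alpha\ge c_1(\mathcal F')\cdot\sigma^*\alpha$ that you need fails as soon as $\mathcal D\cdot\alpha>0$. Also, the natural map is the counit $\sigma^*\sigma_*\mathcal F'\to\mathcal F'$, not an inclusion the other way. What is needed is that this counit be an isomorphism near $\mathcal D'$, and \Cref{R:VBdescentRS} gives that only once you know the stabilizer acts trivially on the fibres of $\mathcal F'$, which you never check.

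The missing idea is to use the saturation you arranged via \Cref{L:muSat}; the paper takes $\mathcal F'$ maximally destabilizing, hence saturated.
\begin{itemize}
\item Since $(\sigma^*\mathcal E)^{\vee\vee}/\mathcal F'$ is torsion-free, it is locally free at the generic point of $\mathcal D'$. There one also has $(\sigma^*\mathcal E)^{\vee\vee}=\sigma^*\mathcal E$, because $\mathcal E$ is locally free in codimension one.
\item So near that point $\mathcal F'$ is a subbundle of $\sigma^*\mathcal E$. Its fibres at points of $\mathcal D'$ are $\mu_r$-subrepresentations of fibres of $\sigma^*\mathcal E$, on which $\mu_r$ acts trivially; hence $\mu_r$ acts trivially on the fibres of $\mathcal F'$.
\item \Cref{R:VBdescentRS} then gives $\sigma^*\sigma_*\mathcal F'\cong\mathcal F'$ away from codimension two, i.e.\ $a=0$, and $\mathcal F'$ agrees with $\sigma^*\mathcal F$ in codimension one.
\item \Cref{L:GFc1}\ref{L:GFc1-2} now applies directly, with no sign bookkeeping.
\end{itemize}
This is exactly the paper's root-stack case. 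Without it, the converse direction of your argument does not close.
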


\begin{proof}   
If $\mathcal E$ is not $\alpha$-semi-stable, then 
 \Cref{L:GFstab-1} implies that $(\sigma^*\mathcal E)^{\vee \vee}$ is not $\sigma^*\alpha$-semi-stable.  Conversely, assume that 
  $(\sigma^*\mathcal E)^{\vee \vee}$ is not $\sigma ^*\alpha$-semi-stable, and let $\mathcal F'\subseteq(\sigma^*\mathcal E)^{\vee \vee}$ be the maximally destabilizing subsheaf.  
We now break the proof into two cases: the case of a blow-up, and the case of a root stack.  In either case, let $\mathcal Z'\subseteq \mathcal X'$ be the locus where $\mathcal F'$ and $(\sigma^*\mathcal E)^{\vee \vee}$ fail to be locally free, and let $\mathcal Z$ be the union of $\sigma (\mathcal Z')$ together with the locus over which $\sigma$ is not finite (in the case where $\sigma$ is a blow-up). Let $\mathcal U:=\mathcal X-\mathcal Z$.    

We claim that there is a coherent subsheaf $\mathcal F\subseteq \mathcal E$ such that $\mathcal F'|_{\sigma^{-1}(\mathcal U)}\cong \sigma^*(\mathcal F|_{\mathcal U})$.  
Note also that $(\sigma^*\mathcal E)^{\vee \vee}|_{\sigma^{-1}(\mathcal U)}\cong \sigma^*(\mathcal E|_{\mathcal U})$. 
Assuming this for the moment, and letting $d$ be the degree of $\sigma$,  then \Cref{L:GFc1}\ref{L:GFc1-2} implies that $$\mu_\alpha(\mathcal F)=\frac{1}{d}\mu_{\sigma^*\alpha}(\mathcal F')>\frac{1}{d}\mu_{\sigma^*\alpha}((\sigma^*\mathcal E)^{\vee \vee})=\mu_\alpha(\mathcal E),$$ completing the proof.

Therefore, let us show the existence of $\mathcal F$. 
For brevity of notation, let $\mathcal U'=\sigma^{-1}(\mathcal U)$ and let   $\sigma'=\sigma|_{\mathcal U'}:\mathcal U'\to \mathcal U$.  We have the inclusion $\mathcal F'|_{\mathcal U'}\hookrightarrow \sigma'^*(\mathcal E|_{\mathcal U})$.  As the push-forward is left exact, and in both cases (blow-ups and root stacks) we have  $\sigma_*\mathcal O_{\mathcal X'}\cong \mathcal O_{\mathcal X}$, then applying $\sigma'_*$, we obtain an inclusion 
$\sigma'_*(\mathcal F'|_{\mathcal U'})\hookrightarrow \sigma'_*(\sigma'^*(\mathcal E|_{\mathcal U})\otimes \mathcal O_{\mathcal U'})= \mathcal E|_{\mathcal U}$.   Let $i:\mathcal U\to \mathcal X$ be the inclusion. Applying $i_*$, we then have $i_*\sigma'_*(\mathcal F'|_{\mathcal U'})\hookrightarrow i_*\mathcal E|_{\mathcal U}=\mathcal E^{\vee \vee}$.  Considering the inclusion $\mathcal E\hookrightarrow \mathcal E^{\vee \vee}$, we take $\mathcal F:=i_*\sigma'_*(\mathcal F'|_{\mathcal U'})\cap \mathcal E$.
Note that by construction, over $\mathcal U$ we have $\mathcal F|_{\mathcal U}=(\sigma_*\mathcal F')|_{\mathcal U}$.

  We now check that  $\mathcal F\subseteq \mathcal E$ satisfies $\mathcal F'|_{\sigma^{-1}(\mathcal U)}\cong \sigma'^*(\mathcal F|_{\mathcal U})$.  
  From the definitions, it suffices to show that the natural morphism $\sigma'^*\sigma'_*(\mathcal F'|_{\mathcal U'})\to \mathcal F'|_{\mathcal U'}$ is an isomorphism.
  First, consider the case of a blow-up.  Then the natural morphism $\sigma^*\sigma_*\mathcal F'\to \mathcal F'$ is an isomorphism on $\sigma^{-1}(\mathcal U)\cong \mathcal U$.  
  Next suppose that $\sigma$ is a root stack.  The same argument works here, but for clarity, we break it down in a slightly different way.  Both $\mathcal F'|_{\sigma^{-1}(\mathcal U)}$ and $(\sigma^*\mathcal E)|_{\sigma^{-1}(\mathcal U)}$ are vector bundles.    
   Since $\mathcal F'|_{\mathcal U'}$ is contained in $\sigma'^*(\mathcal E|_{\mathcal U})$, the action of $\mu_r$ on the fibers of $\mathcal F'|_{\sigma^{-1}(\mathcal U)}$ is trivial (as the action is trivial on the fibers of $\sigma'^*(\mathcal E|_{\mathcal U})$).    
  Thus 
   $\mathcal F'|_{\sigma^{-1}(\mathcal U)}$ descends to a vector bundle $\mathcal F_{\mathcal U}\subseteq \mathcal E|_{\mathcal U}$ on $\mathcal U$; more precisely we have that  the natural morphism $\sigma'^*\sigma'_*(\mathcal F'|_{\mathcal U'})\to \mathcal F'|_{\mathcal U'}$ is an isomorphism (see \Cref{R:VBdescentRS}), 
  which was what we wanted to prove.
\end{proof}

We are now ready to prove \Cref{P:StabOnV}.

\begin{proof}[Proof of \Cref{P:StabOnV}] 
Let $q:V\to \mathcal X$ be a finite flat morphism from a smooth projective variety $V$.  Consider the diagram \eqref{E:TRydh-dgm} and notation from \Cref{T:Rydh}:

\begin{equation}\label{E:P:StabOnVpf}
\xymatrix{
\mathcal V'\ar[r]^{\sigma'} \ar[d]_{q'}^{/G}& V \ar[d]^q\\
[\mathcal V'/G]\cong {\mathcal X'}\ar[r]^<>(0.5)\sigma& \mathcal X
}
\end{equation}
and recall that $\mathcal V'$ is a smooth proper integral DM stack over $\mathbb C$ with projective coarse moduli space, $\sigma$ is a stacky blow-up (a composition of blow-ups and root stacks), $\sigma'$ is generically finite, and $G$ is a finite group acting on $\mathcal V'$.

Now if the vector bundle $\mathcal E$ is not $\alpha$-semi-stable on $\mathcal X$, then $q^*\mathcal E$ is not $q^*\alpha$-semi-stable on $V$, by \Cref{L:GFstab-1}.  Conversely, suppose that $\mathcal E$ is $\alpha$-semi-stable.  
For the sake of contradiction, assume that $q^*\mathcal E$ is not $q^*\alpha$-semi-stable.  
On the one hand,  we have  by \Cref{L:GFstab-1}  that $\sigma'^*q^*\mathcal E$ is not $\sigma'^*q^*\alpha$-semi-stable. 
On the other  hand, 
we can consider pulling back $\mathcal E$ in the other direction around the diagram \eqref{E:P:StabOnVpf}, and, as we are assuming that $\mathcal E$ is $\alpha$-semi-stable, then 
using  \Cref{L:StkyBUstab} and \Cref{L:Galstab}, one has that $q'^*\sigma^*\mathcal E$ is $q'^*\sigma^*\alpha$-semi-stable on $\mathcal V'$.  
 This is a contradiction as $q'^*\sigma^*\mathcal E\cong \sigma'^*q^*\mathcal E$ and  $q'^*\sigma^*\alpha=\sigma'^*q^*\alpha$.
\end{proof}

\subsection{$\alpha$-semi-stability and tensor products of sheaves}

  In this subsection, we generalize  \cite[Thm.~4.2]{GKP16} to DM stacks:

\begin{teo}

\label{T:GKP16-4.2}
 Let $\mathcal X$ be a smooth proper integral DM stack over $\mathbb C$ with projective coarse moduli space, and let $\alpha$ be a movable class of curves on $\mathcal X$.  If $\mathcal F$ and $\mathcal G$ are nonzero 
torsion-free  coherent sheaves on $\mathcal X$, then the following hold:

\begin{enumerate}[label=(\alph*)]

\item $\mu^{\max}_\alpha((\mathcal F\otimes \mathcal G)^{\vee \vee} )=\mu_\alpha^{\max}( \mathcal F)+\mu_\alpha^{\max}( \mathcal G)$. \label{E:GKP16-4.2a}

\item If $\mathcal F$ and $\mathcal G$ are $\alpha$-semi-stable, then $(\mathcal F\otimes \mathcal G)^{\vee \vee}$ is, too. \label{E:GKP16-4.2b}

\end{enumerate}
\end{teo}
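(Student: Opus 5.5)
We follow the strategy of \cite[Thm.~4.2]{GKP16}, with \Cref{P:TensorProdStab} (semi-stability of tensor products of semi-stable \emph{vector bundles}) as the central input. We first reduce \ref{E:GKP16-4.2b} to \ref{E:GKP16-4.2a}. The determinant of $(\mathcal F\otimes\mathcal G)^{\vee\vee}$ can be computed on the big open set where $\mathcal F$ and $\mathcal G$ are locally free. There one has $c_1 = \operatorname{rk}\mathcal G\, c_1(\mathcal F)+\operatorname{rk}\mathcal F\, c_1(\mathcal G)$, so $\mu_\alpha((\mathcal F\otimes\mathcal G)^{\vee\vee})=\mu_\alpha(\mathcal F)+\mu_\alpha(\mathcal G)$. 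Hence \ref{E:GKP16-4.2a} and semi-stability of $\mathcal F,\mathcal G$ give $\mu^{\max}_\alpha=\mu_\alpha$ for the reflexive tensor product, which is \ref{E:GKP16-4.2b}.

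For \ref{E:GKP16-4.2a}, the inequality ``$\ge$'' is easy. Let $\mathcal F_1\subseteq\mathcal F$ and $\mathcal G_1\subseteq\mathcal G$ be the maximally destabilizing subsheaves (\Cref{P:GKP16-2.24}). Then $(\mathcal F_1\otimes\mathcal G_1)^{\vee\vee}$ maps to $(\mathcal F\otimes\mathcal G)^{\vee\vee}$, and this map is generically injective, hence injective since the source is torsion-free. The slope of the image is $\mu_\alpha(\mathcal F_1)+\mu_\alpha(\mathcal G_1)$ by the additivity computation above.

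For ``$\le$'', we use the HN filtrations (\Cref{L:HNFil}) of $\mathcal F$ and $\mathcal G$. The sheaves $(\mathcal F_i\otimes\mathcal G_j)$ induce a filtration of $(\mathcal F\otimes\mathcal G)^{\vee\vee}$ whose graded pieces agree, away from codimension two, with $\operatorname{gr}_i\mathcal F\otimes\operatorname{gr}_j\mathcal G$. Given a subsheaf $\mathcal S\subseteq(\mathcal F\otimes\mathcal G)^{\vee\vee}$, the standard argument (using \Cref{L:SlpIncSlpInc}, \eqref{E:SlpArith} and \Cref{C:GKP16-2.17}, with slopes only changing by contributions of torsion sheaves, which are controlled by \Cref{L:muSat} and \Cref{L:detTors} since $\alpha$ is movable) reduces the bound $\mu_\alpha(\mathcal S)\le\mu^{\max}_\alpha(\mathcal F)+\mu^{\max}_\alpha(\mathcal G)$ to the statement that $(\mathcal A\otimes\mathcal B)^{\vee\vee}$ is $\alpha$-semi-stable whenever $\mathcal A,\mathcal B$ are $\alpha$-semi-stable torsion-free sheaves. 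In this reduction one intersects $\mathcal S$ with the filtration and bounds the slope of each graded piece by $\mu_\alpha(\operatorname{gr}_i\mathcal F)+\mu_\alpha(\operatorname{gr}_j\mathcal G)\le\mu_1+\nu_1$. So everything comes down to \ref{E:GKP16-4.2b} for arbitrary torsion-free semi-stable sheaves; no circularity arises, since we prove this directly.

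The main obstacle is passing from \Cref{P:TensorProdStab} (vector bundles) to torsion-free sheaves. The plan is to use \Cref{T:Rossi} to choose a blow-up $\mu:\mathcal X'\to\mathcal X$ (composed with a resolution as in \Cref{T:Hironaka}) such that $\mathcal A':=(\mu^*\mathcal A)^{\vee\vee}$ and $\mathcal B':=(\mu^*\mathcal B)^{\vee\vee}$ are both locally free. By \Cref{L:StkyBUstab}, $\mathcal A'$ and $\mathcal B'$ are $\mu^*\alpha$-semi-stable, and $\mu^*\alpha$ is movable by \Cref{L:mov-pullXX}. Then \Cref{P:TensorProdStab} makes $\mathcal A'\otimes\mathcal B'$ semi-stable. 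Away from the exceptional locus, $\mathcal A'\otimes\mathcal B'$ agrees with $\mu^*(\mathcal A\otimes\mathcal B)^{\vee\vee}$, so by \Cref{L:GFc1} it agrees numerically with $(\mu^*(\mathcal A\otimes\mathcal B)^{\vee\vee})^{\vee\vee}$. Applying \Cref{L:StkyBUstab} once more (its ``if'' direction, which descends destabilizing subsheaves for blow-ups) yields semi-stability of $(\mathcal A\otimes\mathcal B)^{\vee\vee}$ on $\mathcal X$. The delicate points to check are that \Cref{L:StkyBUstab} applies to the blow-up produced by Rossi together with the smoothing step (so that $\mathcal X'$ remains smooth, proper, with projective coarse space), and that slopes are insensitive to modifications along exceptional divisors, which is exactly the content of \Cref{L:GFc1}.
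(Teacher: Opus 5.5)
Your proposal is correct and is essentially the paper's proof: part (b) goes via Rossi's blow-up, \Cref{L:StkyBUstab}, \Cref{P:TensorProdStab}, and a transfer of subsheaves between $\mathcal A'\otimes\mathcal B'$ and $(\mu^*(\mathcal A\otimes\mathcal B)^{\vee\vee})^{\vee\vee}$, and part (a) then follows from (b) by the GKP16 filtration argument. (The paper isolates that transfer step as \Cref{L:BU-tens}; it needs the correspondence of subsheaves, not just the equality of slopes from \Cref{L:GFc1}.) The differences are cosmetic: the paper filters only $\mathcal F$ by a refined HN filtration and argues by Hom-vanishing in two steps instead of your simultaneous HN bifiltration, and the direction of \Cref{L:StkyBUstab} you invoke at the end is the one proved by pulling back destabilizing subsheaves (\Cref{L:GFstab-1}), not by descending them.
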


Before the proof of the theorem, we establish one more lemma, which generalizes \cite[Lem.~4.6]{GKP16}:

\begin{lem}[Blow-ups and tensor products]

\label{L:BU-tens} 
Let $\sigma:\mathcal X'\to \mathcal X$ be a blow-up of a smooth proper integral  DM stack over $\mathbb C$ with projective coarse moduli space  and let $\alpha$ be a movable curve class on $\mathcal X$.   For nonzero torsion-free coherent sheaves   $\mathcal E_1$ and  $\mathcal E_2$ 
 on $\mathcal X$,
we have $(\mathcal E_1\otimes \mathcal E_2)^{\vee \vee}$ 
 is $\alpha$-semi-stable if and only if 
$$((\sigma^*\mathcal E_1)^{\vee \vee} \otimes (\sigma^*\mathcal E_2)^{\vee \vee})^{\vee \vee}$$ is $\sigma^*\alpha$-semi-stable.  
\end{lem}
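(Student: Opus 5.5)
The plan is to reduce directly to \Cref{L:StkyBUstab}, applied to the torsion-free sheaf $\mathcal E:=(\mathcal E_1\otimes \mathcal E_2)^{\vee\vee}$. That lemma says $\mathcal E$ is $\alpha$-semi-stable if and only if $(\sigma^*\mathcal E)^{\vee\vee}$ is $\sigma^*\alpha$-semi-stable. It therefore suffices to compare $(\sigma^*\mathcal E)^{\vee\vee}$ with
$$\mathcal E':=((\sigma^*\mathcal E_1)^{\vee \vee} \otimes (\sigma^*\mathcal E_2)^{\vee \vee})^{\vee \vee},$$
and to show that semi-stability of one is equivalent to semi-stability of the other.

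\textbf{Step 1: agreement off a small locus.} Let $\mathcal U\subseteq \mathcal X$ be the open substack where $\mathcal E_1$ and $\mathcal E_2$ are locally free and $\sigma$ is an isomorphism (or finite). The complement of $\mathcal U$ has codimension at least two away from the exceptional locus. Over $\sigma^{-1}(\mathcal U)$, both $(\sigma^*\mathcal E)^{\vee\vee}$ and $\mathcal E'$ restrict to $\sigma^*(\mathcal E_1\otimes\mathcal E_2)|_{\mathcal U}$. Hence the two reflexive sheaves are isomorphic on an open $\mathcal U'\subseteq \sigma^{-1}(\mathcal U)$ whose complement in $\sigma^{-1}(\mathcal U)$ has codimension at least two.

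\textbf{Step 2: slopes of subsheaves agree.} By \Cref{L:GFc1}\ref{L:GFc1-1}, $\mu_{\sigma^*\alpha}$ only depends on the restriction to $\mathcal U'$; the exceptional divisors pair to zero with $\sigma^*\alpha$ by \Cref{R:EdotMov0}. The same holds for subsheaves. Let $\mathcal F\subseteq(\sigma^*\mathcal E)^{\vee\vee}$ be a saturated subsheaf. Restrict it to $\mathcal U'$, transport it via the isomorphism into $\mathcal E'|_{\mathcal U'}$, and take the saturation of its pushforward-extension, i.e.\ $j_*(-)\cap\mathcal E'$ with $j:\mathcal U'\hookrightarrow\mathcal X'$. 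This yields a subsheaf of $\mathcal E'$ agreeing with $\mathcal F$ on $\mathcal U'$, hence with the same $\sigma^*\alpha$-slope. The construction is symmetric. So $\mu^{\max}_{\sigma^*\alpha}$ and $\mu_{\sigma^*\alpha}$ coincide for the two sheaves, and one is semi-stable iff the other is.

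\textbf{Main obstacle.} I expect the main difficulty to be the bookkeeping in Step 2. The extension $j_*(\mathcal F|_{\mathcal U'})\cap \mathcal E'$ must be coherent and agree on $\mathcal U'$. One also has to account for the divisorial part of $\mathcal X'\setminus \sigma^{-1}(\mathcal U)$, which is only allowed because those components are $\sigma$-exceptional and so are killed by $\sigma^*\alpha$.

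To avoid the coherence issue, I would first try a cleaner route. Take the maximally destabilizing subsheaf $\mathcal F'$ of one sheaf (\Cref{P:GKP16-2.24}). Push it forward by $\sigma$ and intersect with $\mathcal E$, exactly as in the proof of \Cref{L:StkyBUstab}. This produces a subsheaf $\mathcal F\subseteq\mathcal E$ with $\sigma^*\mathcal F\cong\mathcal F'$ over $\sigma^{-1}(\mathcal U)$. Then \Cref{L:GFc1}\ref{L:GFc1-2} transfers the strict slope inequality down to $\mathcal X$. Combined with \Cref{L:GFstab-1}, applied to $\mathcal E$ and to the sheaf $\mathcal E'$ regarded through the agreement on $\mathcal U'$, this gives both directions.
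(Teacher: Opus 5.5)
Your Steps 1--2 are the paper's proof. You reduce via \Cref{L:StkyBUstab} to comparing $(\sigma^*\mathcal E)^{\vee\vee}$ with $\mathcal E'$, note that the two agree over $\sigma^{-1}(\mathcal U)$, transport subsheaves in both directions by $j_*(\mathcal F|_{\mathcal U'})\cap \mathcal E'$, and conclude with \Cref{L:GFc1}\ref{L:GFc1-1}. The coherence worry you flag is harmless, since a quasi-coherent subsheaf of a coherent sheaf on a noetherian stack is coherent, so the alternative ``cleaner route'' is not needed.
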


\begin{proof}
For brevity,  will use the standard $[-]$ and $\boxtimes$ notation to indicate reflexive hulls.  In other words, 
$(\mathcal E_1\otimes \mathcal E_2)^{\vee \vee} = \mathcal E_1\boxtimes \mathcal E_2$ 
and $((\sigma^*\mathcal E_1)^{\vee \vee} \otimes (\sigma^*\mathcal E_2)^{\vee \vee})^{\vee \vee}=(\sigma^{[*]}\mathcal E_1)\boxtimes (\sigma^{[*]}\mathcal E_2)$.  To start, from \Cref{L:StkyBUstab}, we have that $ \mathcal E_1\boxtimes \mathcal E_2$ is $\alpha$-semi-stable if and only if $\sigma^{[*]}( \mathcal E_1\boxtimes \mathcal E_2)$ is $\sigma^*\alpha$-semi-stable.  So we aim to show that $\mathcal E':=\sigma^{[*]}( \mathcal E_1\boxtimes \mathcal E_2)$ is $\sigma^*\alpha$-semi-stable if and only if $\mathcal E'':=(\sigma^{[*]}\mathcal E_1)\boxtimes (\sigma^{[*]}\mathcal E_2)$ is $\sigma^*\alpha$-semi-stable.  

Let $\mathcal Z$ be the locus in $\mathcal X$ over which $\mathcal E_1$ and $\mathcal E_2$ fail to be locally free or 
$\sigma$ fails to be an isomorphism; this locus is codimension $\ge 2$.  Let $\mathcal U:=\mathcal X-\mathcal Z$, and let $i:\mathcal U\hookrightarrow \mathcal X$ and  $i':\sigma ^{-1}(\mathcal U)\hookrightarrow \mathcal X'$ be the inclusions.  Clearly we have $\mathcal E'|_{\sigma^{-1}(\mathcal U)}\cong \mathcal E''|_{\sigma^{-1}(\mathcal U)}$.   Given any subsheaf $\mathcal F'\subseteq \mathcal E'$, we then obtain a subsheaf $\mathcal F'|_{\sigma^{-1}(\mathcal U)}\subseteq \mathcal E''|_{\sigma^{-1}(\mathcal U)}$. We then obtain an inclusion $ i'_*(\mathcal F'|_{\sigma^{-1}(\mathcal U)})\subseteq i'_*(\mathcal E''|_{\sigma^{-1}(\mathcal U)})$, and finally via the canonical inclusion $\mathcal E''\to i'_*i'^*\mathcal E''$, and setting $\mathcal F''= i'_*(\mathcal F'|_{\sigma^{-1}(\mathcal U)})\cap \mathcal E''$, we obtain a subsheaf $\mathcal F''\subseteq \mathcal E''$, with $\mathcal F''|_{\sigma^{-1}(\mathcal U)}\cong \mathcal F'|_{\sigma^{-1}(\mathcal U)}$.  Interchanging the roles of $\mathcal E'$ and $\mathcal E''$,  we also have that given any 
 subsheaf $\mathcal F''\subseteq \mathcal E''$ we obtain a 
 subsheaf $\mathcal F'\subseteq \mathcal E'$,
  with $\mathcal F'|_{\sigma^{-1}(\mathcal U)}\cong \mathcal F''|_{\sigma^{-1}(\mathcal U)}$.

With this set-up, we see from \Cref{L:GFc1}\ref{L:GFc1-1} 
 that $\mu_{\sigma^*\alpha}(\mathcal E')=\mu_{\sigma^*\alpha}(\mathcal E'')$, and moreover, given any subsheaf $\mathcal F'\subseteq \mathcal E'$ (resp.~$\mathcal F''\subseteq \mathcal E''$), we obtain a  subsheaf $\mathcal F''\subseteq \mathcal E''$ (resp.~$\mathcal F'\subseteq \mathcal E'$) with $\mu_{\sigma^*\alpha}(\mathcal F'')=\mu_{\sigma^*\alpha}(\mathcal F')$ (resp.~$\mu_{\sigma^*\alpha}(\mathcal F')=\mu_{\sigma^*\alpha}(\mathcal F'')$).  Clearly $\mathcal E'$ is $\sigma^*\alpha$-semi-stable if and only if $\mathcal E''$ is $\sigma^*\alpha$-semi-stable.  
\end{proof}

\begin{proof}[Proof of \Cref{T:GKP16-4.2}] 
 We start by proving part \ref{E:GKP16-4.2b}.  
 By \Cref{T:Rossi} applied to $\mathcal F\oplus \mathcal G$, 
   there exists a blow-up $\mu:\mathcal X'\to \mathcal X$ such that $(\mu^*\mathcal F)^{\vee\vee}$ and $(\mu^*\mathcal G)^{\vee \vee}$ are vector bundles.  Moreover, by \Cref{L:StkyBUstab},  $(\mu^*\mathcal F)^{\vee\vee}$ and $(\mu^*\mathcal G)^{\vee \vee}$ are $\mu^*\alpha$-semi-stable. Therefore, by \Cref{P:TensorProdStab},  the tensor product 
  $(\mu^*\mathcal F)^{\vee\vee}\otimes (\mu^*\mathcal G)^{\vee \vee}$ is $\mu^*\alpha$-semi-stable. 
  It follows from \Cref{L:BU-tens} that $(\mathcal F\otimes \mathcal G)^{\vee \vee}$ is $\alpha$-semi-stable.

We now prove \ref{E:GKP16-4.2a}.  
This is the same as the proof that appears in \cite{GKP16}, using what we have proved so far for DM stacks; we include it here for completeness. One shows \ref{E:GKP16-4.2a} of the theorem in three steps. 

\subsubsection*{Step 1: Some reductions:} Since numerical classes and slopes are unaffected when modifying $\m{F}$ and $\m{G}$ along a subset of codimension at least two, we are free to replace these sheaves by their double duals, and assume henceforth that the sheaves $\m{F}$ and $\m{G}$ are reflexive.

Combining the Harder--Narasimhan filtration and a Jordan--H\"older-filtration as in
Remark~\ref{rem:RHNF}, we choose a filtration of $\m{F}$, say $0 = \m{F}_0 \subsetneq
\m{F}_1 \subsetneq \cdots \subsetneq \m{F}_k = \m{F}$ such that the following holds.
\begin{enumerate}[label=(\arabic*)]
\item\label{il:1} The quotients $\mathcal{Q}_{i+1} := \m{F}_{i+1}/\m{F}_i$ are torsion-free
  and $\alpha$-stable for all $i$.
  \item\label{il:4} $\mu_\alpha(\mathcal Q_1)=\mu_\alpha^{\max}(\mathcal F)$.
\item\label{il:3} The sequence of slopes, $\bigl( \mu_{\alpha}(\mathcal{Q}_i) \bigr)_{1 \leq i \leq  k}$, is decreasing.
\item\label{il:2} The sequence of ranks, $(\operatorname{rk} \m{F}_i)_{0 \leq i \leq k}$, is
  strictly increasing.
\end{enumerate}
Taking reflexive tensor products with $\m{G}$, denoted $(-)\boxtimes \mathcal G$, we obtain a filtration of $\m{F}
\boxtimes \m{G}$,
\begin{equation}\label{eq:FTP}
  0 = \m{F}_0 \boxtimes \m{G} \subsetneq \m{F}_1 \boxtimes \m{G} \subsetneq \cdots \subsetneq \m{F}_k \boxtimes \m{G} = \m{F} \boxtimes \m{G}.
\end{equation}
There exists an open substack $\m{U} \subseteq \X$ with complement of codimension $\geq 2$, where all sheaves $\m{F}$, $\m{G}$, $\m{F}_i$ and $\m{F}_i \boxtimes \m{G}$, as well as all quotients $\mathcal{Q}_i$ and $(\m{F}_{i+1}\boxtimes \m{G})/(\m{F}_i\boxtimes \m{G})$ are locally free. Pushing forward along this open embedding, we obtain that each term $\m{F}_i \boxtimes \m{G}$ is saturated  in $\m{F}_{i+1} \boxtimes \m{G}$. Moreover, on this open set, since two
reflexive sheaves together with a given morphism are isomorphic via that morphism if and only if they agree away from codimension $2$, the
quotients given by the filtration~\eqref{eq:FTP} can be identified as follows,
\begin{equation}\label{eq:XP2}
  \left( \cfrac{\m{F}_{i+1}\boxtimes \m{G}}{\m{F}_i\boxtimes \m{G}} \right)^{\vee \vee} = \mathcal{Q}_{i+1} \boxtimes \m{G}.
\end{equation}
The slopes of these sheaves are computed as follows.
  \begin{align}
    \mu_{\alpha}(\mathcal{Q}_{i+1} \boxtimes \m{G}) & = \mu_{\alpha}(\mathcal{Q}_{i+1}) + \mu_{\alpha}(\m{G}) && \text{(slope of product)}\\
    & \leq \mu_{\alpha}(\mathcal{Q}_1) + \mu_{\alpha}(\m{G}) && \text{(by~\ref{il:3}, above)} \\
    & \leq \mu_{\alpha}^{\max}(\m{F}) + \mu_{\alpha}^{\max}(\m{G}) && 
\text{(by~\ref{il:4}, above)} 
    \label{eq:fx3}
  \end{align}

\subsubsection*{Step 2: The case where $\m{F}$ or $\m{G}$ is $\alpha$-semi-stable}

The roles of $\m{F}$ and $\m{G}$ being symmetric, consider the case where $\m{G}$ is $\alpha$-semi-stable. 
Then, using part \ref{E:GKP16-4.2b} of  \Cref{T:GKP16-4.2}, proven above, the quotient sheaves $\mathcal{Q}_i \boxtimes \m{G}$ will thus
be $\alpha$-semi-stable. 
Consequently,  \Cref{C:GKP16-2.17} and  \eqref{eq:fx3}  imply that for any $\alpha$-semi-stable sheaf $\mathcal A$ with  $\mu_{\alpha}(\m{A}) > \mu_{\alpha}^{\max}(\m{F}) + \mu_{\alpha}^{\max}(\m{G})$, then any morphism
$\m{A} \ra \mathcal{Q}_i \boxtimes \m{G}$ will be zero.  Considering the filtration \eqref{eq:FTP}, it  follows that any
morphism $\m{A} \ra \m{F} \boxtimes \m{G}$ will be zero (start with $i=k$, and proceed by induction), and therefore, that the $\alpha$-slope of any
$\alpha$-semi-stable subsheaf $\m{B} \subseteq \m{F} \boxtimes \m{G}$ is bounded by the inequality: 
$$
\mu_{\alpha}(\m{B}) \leq \mu^{\max}_{\alpha}(\m{F})+\mu^{\max}_{\alpha}(\m{G}).
$$
As the maximally destabilizing subsheaf (\Cref{P:GKP16-2.24}) of $\mathcal F\boxtimes \mathcal G$  is $\alpha$-semi-stable, this implies that 
$$
\mu_\alpha^{\max}(\mathcal F\boxtimes \mathcal G)\le \mu_\alpha^{\max}(\mathcal F)+\mu_\alpha^{\max}(\mathcal G).
$$
On the other hand, $\m{F} \boxtimes \m{G}$  contains the subsheaf
$\m{F}_1 \boxtimes \m{G} = \mathcal{Q}_1 \boxtimes \m{G}$, whose slope equals
$\mu^{\max}_{\alpha}(\m{F}) + \mu_{\alpha}(\m{G}) = \mu^{\max}_{\alpha}(\m{F}) + \mu^{\max}_{\alpha}(\m{G})$.  This
proves \ref{E:GKP16-4.2a} of the theorem, in the case where one of the factors is $\alpha$-stable.

\subsubsection*{Step 3: The general case}

Recalling from \ref{il:1}, above, that the quotient sheaves $\mathcal{Q}_{i+1}$ are $\alpha$-stable, we
can apply  Step~2 to conclude that
$$
\mu^{\max}_{\alpha} \bigl(\mathcal{Q}_{i+1} \boxtimes \m{G} \bigr) = \mu^{\max}_{\alpha} \bigl(\mathcal{Q}_{i+1}
\bigr) + \mu^{\max}_{\alpha} \bigl(\m{G} \bigr) \leq\mu^{\max}_{\alpha} \bigl(\m{F} \bigr) +
\mu^{\max}_{\alpha} \bigl(\m{G} \bigr).
$$
As in Step 2, this implies that the $\alpha$-slope of any $\alpha$-semi-stable subsheaf $\m{B} \subseteq \m{F} \boxtimes
\m{G}$ is bounded by the sum $\mu^{\max}_{\alpha}(\m{F})+\mu^{\max}_{\alpha}(\m{G})$.  
As the maximally destabilizing subsheaf of $\mathcal F\boxtimes \mathcal G$ is $\alpha$-semi-stable, we obtain that 
$$
\mu_\alpha^{\max}(\mathcal F\boxtimes \mathcal G)\le \mu_\alpha^{\max}(\mathcal F)+\mu_\alpha^{\max}(\mathcal G).  
$$
On the other hand,
$\m{F} \boxtimes \m{G}$  contains the reflexive product of the maximally
destabilising subsheaves of $\mathcal F$ and $\mathcal G$.  The slope of this product equals $\mu^{\max}_{\alpha}(\m{F}) +
\mu^{\max}_{\alpha}(\m{G})$.  This proves \ref{E:GKP16-4.2a} of the theorem,  in general.
\end{proof}

From this one has:

\begin{cor}

\label{T:SonCP-T5}
Let $\alpha \in  N_1(\mathcal X)_{\mathbb R}$ be a movable class. If $\mathcal F$ and $\mathcal G$ are torsion-free and $\alpha$-semistable
coherent sheaves on $\mathcal X$, then their tensor product
$$
\mathcal F\hat \otimes \mathcal G := (\mathcal F\otimes \mathcal G)/(\mathcal F\otimes \mathcal G)_{tor}
$$
modulo torsion, is again $\alpha$-semistable, and $\mu_\alpha(\mathcal F \hat \otimes \mathcal G) = \mu_\alpha(\mathcal F)+\mu_\alpha(\mathcal G)$.
\end{cor}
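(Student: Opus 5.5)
The plan is to compare $\mathcal F\hat\otimes\mathcal G$ with the reflexive tensor product $(\mathcal F\otimes\mathcal G)^{\vee\vee}$, which is $\alpha$-semi-stable by \Cref{T:GKP16-4.2}\ref{E:GKP16-4.2b}. The key observation is that the natural map $\mathcal F\otimes\mathcal G\to(\mathcal F\otimes\mathcal G)^{\vee\vee}$ kills exactly the torsion. Hence it induces an injection
\[
\mathcal F\hat\otimes\mathcal G\hookrightarrow(\mathcal F\otimes\mathcal G)^{\vee\vee},
\]
since its kernel is torsion and $\mathcal F\hat\otimes\mathcal G$ is torsion-free. Moreover, this injection is an isomorphism over the open substack $\mathcal U\subseteq\mathcal X$ with complement of codimension $\ge 2$ where $\mathcal F$ and $\mathcal G$ are locally free. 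Indeed, over $\mathcal U$ the tensor product $\mathcal F\otimes\mathcal G$ is already locally free, hence torsion-free and reflexive.

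First I compute the slope. By \eqref{E:detVBU}, the determinants of $\mathcal F\hat\otimes\mathcal G$ and $(\mathcal F\otimes\mathcal G)^{\vee\vee}$ both equal the unique extension of $\det\bigl((\mathcal F\otimes\mathcal G)|_{\mathcal U}\bigr)$, so they agree, and the two sheaves have the same rank. Over $\mathcal U$, the usual vector bundle identity gives
\[
\det(\mathcal F\otimes\mathcal G)=(\det\mathcal F)^{\otimes\operatorname{rk}\mathcal G}\otimes(\det\mathcal G)^{\otimes\operatorname{rk}\mathcal F}.
\]
By \eqref{E:PicXU} this identity extends to all of $\mathcal X$. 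Dividing by the rank $\operatorname{rk}\mathcal F\cdot\operatorname{rk}\mathcal G$ yields
\[
\mu_\alpha(\mathcal F\hat\otimes\mathcal G)=\mu_\alpha\bigl((\mathcal F\otimes\mathcal G)^{\vee\vee}\bigr)=\mu_\alpha(\mathcal F)+\mu_\alpha(\mathcal G).
\]
This is the same "slope of product" computation used in Step 1 of the proof of \Cref{T:GKP16-4.2}.

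Next I prove semi-stability. Let $0\ne\mathcal B\subseteq\mathcal F\hat\otimes\mathcal G$ be any subsheaf. Composing with the injection above, $\mathcal B$ is also a subsheaf of $(\mathcal F\otimes\mathcal G)^{\vee\vee}$. Semi-stability of the reflexive tensor product then gives
\[
\mu_\alpha(\mathcal B)\le\mu_\alpha\bigl((\mathcal F\otimes\mathcal G)^{\vee\vee}\bigr)=\mu_\alpha(\mathcal F\hat\otimes\mathcal G).
\]
Therefore $\mathcal F\hat\otimes\mathcal G$ is $\alpha$-semi-stable, by the definition of $\alpha$-semi-stability in \S\ref{S:SlopeStab}.

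The only step needing care is the claim that the kernel of $\mathcal F\otimes\mathcal G\to(\mathcal F\otimes\mathcal G)^{\vee\vee}$ is precisely the torsion subsheaf. This is an étale-local statement, so I would check it on an étale atlas, where it is the standard fact for coherent sheaves on integral schemes: the map to the double dual kills torsion and is injective on the torsion-free quotient. Everything else is formal given \Cref{T:GKP16-4.2}.
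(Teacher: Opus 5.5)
Your proposal is correct and follows essentially the paper's route. You compare $\mathcal F\hat\otimes\mathcal G$ with its reflexive hull $(\mathcal F\otimes\mathcal G)^{\vee\vee}$, with which it agrees off codimension two, and invoke \Cref{T:GKP16-4.2}; you use part~\ref{E:GKP16-4.2b} together with the explicit injection $\mathcal F\hat\otimes\mathcal G\hookrightarrow(\mathcal F\otimes\mathcal G)^{\vee\vee}$, whereas the paper cites part~\ref{E:GKP16-4.2a}, which for $\alpha$-semi-stable $\mathcal F,\mathcal G$ amounts to the same thing.
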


\begin{proof}
Since $\mathcal F\hat \otimes \mathcal G$  and its reflexive
hull are isomorphic outside a closed subvariety of codimension $\ge 2$, the assertion follows from \Cref{T:GKP16-4.2}\ref{E:GKP16-4.2a}.  The formula
for the $\alpha$-slope of $\mathcal F\hat \otimes \mathcal G$
is  valid for arbitrary nonzero torsion-free coherent sheaves $\mathcal F$ and
$\mathcal G$.
\end{proof}

\begin{cor}\label{C:GKP16-4.2}
Let $\alpha \in  N_1(\mathcal X)_{\mathbb R}$ be a movable class. If $\mathcal F$ is a torsion-free coherent sheaf on $\mathcal X$, and $\mathcal F^{\hat \otimes N}$ has a subsheaf with positive $\alpha$-slope, then $\mathcal F$ has a subsheaf with positive $\alpha$-slope.
\end{cor}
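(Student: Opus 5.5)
We argue by contraposition: assume every nonzero subsheaf of $\mathcal F$ has $\alpha$-slope $\le 0$, i.e.\ $\mu_\alpha^{\max}(\mathcal F)\le 0$. We will show that every nonzero subsheaf of $\mathcal F^{\hat\otimes N}$ then has non-positive $\alpha$-slope. First, as in the proof of \Cref{T:SonCP-T5}, we compare $\mathcal F^{\hat\otimes N}$ with the iterated reflexive tensor product $\mathcal F^{\boxtimes N}:=(\mathcal F^{\otimes N})^{\vee\vee}$. We may assume $\mathcal F\neq 0$. Then $\mathcal F^{\hat\otimes N}$ is torsion-free and agrees with $\mathcal F^{\boxtimes N}$ away from a codimension $\ge 2$ locus, and the natural map $\mathcal F^{\hat\otimes N}\to \mathcal F^{\boxtimes N}$ is injective. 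Hence any subsheaf $\mathcal B\subseteq \mathcal F^{\hat\otimes N}$ is also a subsheaf of $\mathcal F^{\boxtimes N}$, so $\mu_\alpha^{\max}(\mathcal F^{\hat\otimes N})\le\mu_\alpha^{\max}(\mathcal F^{\boxtimes N})$. (In fact the two are equal, using saturation and \Cref{L:GFc1}-type comparisons on a big open set, but we only need the inequality.)

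Next we apply \Cref{T:GKP16-4.2}\ref{E:GKP16-4.2a} inductively on $N$. Write $\mathcal F^{\boxtimes N}=(\mathcal F^{\boxtimes (N-1)}\otimes \mathcal F)^{\vee\vee}$; this holds because both sides are reflexive and agree on the open substack where $\mathcal F$ is locally free, whose complement has codimension $\ge 2$. The theorem then gives
$$\mu_\alpha^{\max}(\mathcal F^{\boxtimes N})=\mu_\alpha^{\max}(\mathcal F^{\boxtimes(N-1)})+\mu_\alpha^{\max}(\mathcal F)=N\cdot\mu_\alpha^{\max}(\mathcal F).$$
Here we also use that reflexive hulls do not change $\mu_\alpha^{\max}$. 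To see this, note that a subsheaf of $\mathcal E^{\vee\vee}$ intersected with $\mathcal E$ has the same determinant away from codimension two, so it has the same slope; hence the suprema agree. This is the same argument used in the proof of \Cref{L:min-slope-equals-negative-slope-of-dual-max}.

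Combining the two steps, $\mu_\alpha^{\max}(\mathcal F^{\hat\otimes N})\le N\mu_\alpha^{\max}(\mathcal F)\le 0$. So no subsheaf of $\mathcal F^{\hat\otimes N}$ has positive $\alpha$-slope, which is the contrapositive of the claim. Since $\mu_\alpha^{\max}$ is attained by \Cref{P:GKP16-2.24}, we can also argue directly: a subsheaf of $\mathcal F^{\hat\otimes N}$ of positive slope forces $\mu_\alpha^{\max}(\mathcal F)>0$, and the maximally destabilizing subsheaf of $\mathcal F$ is then a subsheaf of positive slope. The only point requiring care is the bookkeeping identifying $\mathcal F^{\hat\otimes N}$, $\mathcal F^{\otimes N}$ modulo torsion, and iterated reflexive products up to codimension two. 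This is routine given \eqref{E:detVBU}, which says the determinant, and hence $c_1\cdot\alpha$, depends only on the sheaf over a big open set.
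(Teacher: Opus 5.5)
Your proposal is correct and follows essentially the same route as the paper, whose proof consists only of the remark that the corollary follows directly from \Cref{T:GKP16-4.2}\ref{E:GKP16-4.2a}. What you add is the bookkeeping the paper leaves implicit: you inject $\mathcal F^{\hat\otimes N}$ into the reflexive power $(\mathcal F^{\otimes N})^{\vee\vee}$, then induct on $N$ to get $\mu_\alpha^{\max}\le N\cdot\mu_\alpha^{\max}(\mathcal F)$.
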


\begin{proof}
This follows directly from \Cref{T:GKP16-4.2}\ref{E:GKP16-4.2a}.
\end{proof}

We also record the following variation on \Cref{T:GKP16-4.2} following \cite[Prop.~2.8, Thm.~2.9]{CPFol19}:

\begin{teo}\label{T:GKP16-4.2Sym}
In the notation of  \Cref{T:GKP16-4.2}, we have for all integers $m\ge 1$:

\begin{enumerate}[label=(\alph*)]

\item $\mu^{\max}_\alpha((\operatorname{Sym}^m\mathcal F)^{\vee \vee} )=m\cdot \mu_\alpha^{\max}( \mathcal F)$ and $\mu_\alpha^{\max}((\bigwedge^m\mathcal F)^{\vee\vee})= m\cdot \mu_\alpha^{\max}(\mathcal F)$.  
 \label{E:GKP16-4.2aSym}

\item If $\mathcal F$ is $\alpha$-semi-stable, then $(\operatorname{Sym}^m\mathcal F)^{\vee \vee}$ and $(\bigwedge^m\mathcal F)^{\vee\vee}$ are, too. \label{E:GKP16-4.2bSym}
\end{enumerate}
\end{teo}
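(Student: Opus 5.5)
Part \ref{E:GKP16-4.2bSym} is proved first, and then part \ref{E:GKP16-4.2aSym} is derived from it. The idea is to realize $(\operatorname{Sym}^m\mathcal F)^{\vee\vee}$ and $(\bigwedge^m\mathcal F)^{\vee\vee}$ as direct summands of the reflexive tensor power $\mathcal F^{\boxtimes m}:=(\mathcal F^{\otimes m})^{\vee\vee}$ and apply \Cref{T:GKP16-4.2}. Since we are in characteristic zero, the symmetrization and antisymmetrization idempotents $\frac{1}{m!}\sum_{s\in S_m}s$ and $\frac{1}{m!}\sum_{s\in S_m}\operatorname{sgn}(s)s$ act on $\mathcal F^{\otimes m}$. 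Restricted to the open substack $\mathcal U$ (complement of codimension $\ge 2$) where $\mathcal F$ is locally free, they split off $\operatorname{Sym}^m(\mathcal F|_{\mathcal U})$ and $\bigwedge^m(\mathcal F|_{\mathcal U})$ as direct summands of $\mathcal F|_{\mathcal U}^{\otimes m}$. Pushing forward along $i:\mathcal U\hookrightarrow\mathcal X$ preserves direct sum decompositions, and reflexive sheaves equal $i_*$ of their restrictions. Hence $(\operatorname{Sym}^m\mathcal F)^{\vee\vee}$ and $(\bigwedge^m\mathcal F)^{\vee\vee}$ are direct summands of $\mathcal F^{\boxtimes m}$.

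For \ref{E:GKP16-4.2bSym}, assume $\mathcal F$ is $\alpha$-semi-stable. Inducting on $m$ with \Cref{T:GKP16-4.2}\ref{E:GKP16-4.2b}, using $\mathcal F^{\boxtimes m}=(\mathcal F^{\boxtimes(m-1)}\otimes\mathcal F)^{\vee\vee}$ (equality on $\mathcal U$, both sides reflexive), shows that $\mathcal F^{\boxtimes m}$ is $\alpha$-semi-stable of slope $m\mu_\alpha(\mathcal F)$. A direct summand $\mathcal S$ of a semi-stable sheaf $\mathcal T$ is semi-stable of the same slope. Indeed, $\mu_\alpha(\mathcal S)\le\mu_\alpha(\mathcal T)$ because $\mathcal S$ is a subsheaf, and $\mu_\alpha(\mathcal S)\ge\mu_\alpha(\mathcal T)$ because $\mathcal S$ is a torsion-free quotient (\Cref{L:SlpIncSlpInc}). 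Any subsheaf of $\mathcal S$ is a subsheaf of $\mathcal T$, so its slope is at most $\mu_\alpha(\mathcal T)=\mu_\alpha(\mathcal S)$. The slopes also check directly: on $\mathcal U$ one has $c_1(\operatorname{Sym}^m)=\binom{r+m-1}{m}\frac{m}{r}c_1(\mathcal F)$ and $c_1(\bigwedge^m)=\binom{r-1}{m-1}c_1(\mathcal F)$, which gives slope $m\mu_\alpha(\mathcal F)$ in both cases. Here we need $m\le r=\operatorname{rk}\mathcal F$ for $\bigwedge^m$ to be nonzero; otherwise the statement is vacuous.

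For \ref{E:GKP16-4.2aSym}, the upper bound comes from the summand property again. Since $(\operatorname{Sym}^m\mathcal F)^{\vee\vee}\subseteq\mathcal F^{\boxtimes m}$, we get $\mu_\alpha^{\max}((\operatorname{Sym}^m\mathcal F)^{\vee\vee})\le\mu_\alpha^{\max}(\mathcal F^{\boxtimes m})=m\mu_\alpha^{\max}(\mathcal F)$, where the equality is \Cref{T:GKP16-4.2}\ref{E:GKP16-4.2a} applied inductively; the same holds for $\bigwedge^m$. For the lower bound, let $\mathcal F_1\subseteq\mathcal F$ be the maximally destabilizing subsheaf (\Cref{P:GKP16-2.24}), which is $\alpha$-semi-stable of slope $\mu_\alpha^{\max}(\mathcal F)$. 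The map $\operatorname{Sym}^m\mathcal F_1\to\operatorname{Sym}^m\mathcal F$ is injective on the locus where $\mathcal F_1\subseteq\mathcal F$ is a subbundle, hence it induces an injection $(\operatorname{Sym}^m\mathcal F_1)^{\vee\vee}\hookrightarrow(\operatorname{Sym}^m\mathcal F)^{\vee\vee}$. By the slope computation above, this subsheaf has slope $m\mu_\alpha(\mathcal F_1)$.

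The main obstacle is the exterior power lower bound. When $m>\operatorname{rk}\mathcal F_1$ we get $\bigwedge^m\mathcal F_1=0$, so the same argument fails, and the asserted equality for $\bigwedge^m$ is in fact false in general, since it would predict $\mu^{\max}(\det\mathcal F)=r\mu^{\max}(\mathcal F)$. I would therefore handle it via the refined HN filtration $0\subsetneq\mathcal F_1\subsetneq\cdots$ and use the subsheaf $\bigwedge^{r_1}\mathcal F_1\otimes\bigwedge^{m-r_1}(\cdots)$. I expect this to give equality only for $m\le\operatorname{rk}\mathcal F_1$, in which case $\bigwedge^m\mathcal F_1\subseteq\bigwedge^m\mathcal F$ works exactly as in the Sym case. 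So I would prove the $\bigwedge^m$ part of \ref{E:GKP16-4.2aSym} under that restriction (or as the inequality $\le$ in general), and the semi-stable case \ref{E:GKP16-4.2bSym} in full.
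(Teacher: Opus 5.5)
Your argument is correct for every part of the statement that is actually true, and your objection to the remaining part is also correct: the $\bigwedge^m$ half of (a) is false as stated. Replace the phrase ``it would predict'' with an explicit check. For $m=r=\operatorname{rk}\mathcal F$ one has $(\bigwedge^r\mathcal F)^{\vee\vee}=\det\mathcal F$, a line bundle. A line bundle is automatically $\alpha$-semi-stable, since any nonzero subsheaf has saturation equal to the whole line bundle and \Cref{L:muSat} applies. Hence $\mu_\alpha^{\max}(\det\mathcal F)=c_1(\mathcal F)\cdot\alpha=r\,\mu_\alpha(\mathcal F)$, and this equals $r\,\mu^{\max}_\alpha(\mathcal F)$ only when $\mathcal F$ is $\alpha$-semi-stable. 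For a concrete instance, take $\mathbb P^2$ with $\alpha$ the class of a line and $\mathcal F=\mathcal O\oplus\mathcal O(1)$. Then $\mu^{\max}_\alpha(\bigwedge^2\mathcal F)=1\neq 2=2\,\mu^{\max}_\alpha(\mathcal F)$. The paper's proof is only the sketch ``identical to that of \Cref{T:GKP16-4.2}, adjusting the argument'', and it breaks exactly where you locate the problem. There, the lower bound comes from the reflexive product of maximally destabilizing subsheaves; its analogue here is $\bigwedge^m\mathcal F_1$, which vanishes once $m>\operatorname{rk}\mathcal F_1$. What survives is precisely what you prove:
\begin{itemize}
\item the inequality $\le$ for $\bigwedge^m$ in general;
\item equality for $m\le\operatorname{rk}\mathcal F_1$, in particular whenever $\mathcal F$ is semi-stable and $m\le\operatorname{rk}\mathcal F$;
\item the $\operatorname{Sym}^m$ half of (a);
\item all of (b).
\end{itemize}

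For the true parts, your route differs from the one the paper indicates. The paper only says to re-run the proof of \Cref{T:GKP16-4.2} with $\operatorname{Sym}^m$ and $\bigwedge^m$ in place of $\otimes$, using the two $c_1$ identities. Carried out, that would mean going again through Rossi's blow-up (\Cref{T:Rossi}), the finite-cover comparison \Cref{P:StabOnV}, and the corresponding result on varieties for (b). For (a), it would mean filtering $\operatorname{Sym}^m\mathcal F$ via the refined Harder--Narasimhan filtration of $\mathcal F$. You instead deduce both parts formally from \Cref{T:GKP16-4.2}. In characteristic zero the (anti)symmetrizers split $(\operatorname{Sym}^m\mathcal F)^{\vee\vee}$ and $(\bigwedge^m\mathcal F)^{\vee\vee}$ off $\mathcal F^{\boxtimes m}$; the splitting over $\mathcal U$ extends because reflexive sheaves are $i_*$ of their restrictions. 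You combine this with the elementary fact that a nonzero direct summand of a semi-stable sheaf is semi-stable of the same slope. This needs no new stack-theoretic input. The $c_1$ identities are then only needed to compute the slope of the lower-bound subsheaf $(\operatorname{Sym}^m\mathcal F_1)^{\vee\vee}$, resp.\ $(\bigwedge^m\mathcal F_1)^{\vee\vee}$. The only cost is the reliance on characteristic zero, which is harmless over $\mathbb C$.
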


\begin{proof}
The proof is identical to that of  \Cref{T:GKP16-4.2}, adjusting the argument for the symmetric and anti-symmetric products, and using the identities:
\begin{align*}
c_1((\operatorname{Sym}^m\mathcal F)^{\vee \vee} )& = m\cdot \frac{\operatorname{rk}(\operatorname{Sym}^m\mathcal F)}{\operatorname{rk}(\mathcal F)}\cdot c_1(\mathcal F)\\
c_1((\bigwedge^m\mathcal F)^{\vee\vee}) & =m\cdot \frac{\operatorname{rk}(\bigwedge^m\mathcal F)}{\operatorname{rk}(\mathcal F)}\cdot c_1(\mathcal F).
\end{align*}
(These are standard; for the first identity above, see e.g., \cite[Lem.~1.5]{Vkod1}. For the second, use the splitting principle, and then check that if $r=\operatorname{rk}\mathcal F$, then $m\cdot \frac{\operatorname{rk}(\bigwedge^m\mathcal F)}{\operatorname{rk}(\mathcal F)}= m\binom{r}{m}/r=\binom{r-1}{m-1}$.) For brevity, we leave the details of the remainder of the  proof to the reader.
\end{proof}

\section{Chern class inequalities}\label{S:ChernClasses}

We now establish some inequalities in Chern classes for smooth DM stacks, generalizing some standard inequalities that hold on smooth projective varieties.  

\subsection{Bogomolov--Gieseker inequalities}\label{S:BG}

First we prove \Cref{T:BGmain}.

\begin{proof}[Proof of \Cref{T:BGmain}]
Let $q:V\to \mathcal X$ be a finite flat morphism from a smooth projective surface $V$.  From \Cref{P:StabOnV}, we have that $q^*\mathcal E$ is $q^*\alpha$-semi-stable on $V$.  Therefore, using  \cite[Thm.~5.1]{GKP16}, one has that  $2r\cdot c_2(q^*\mathcal E)-(r-1)\cdot c_1(q^*\mathcal E)^2\ge 0$.  Consequently, we have (using a more precise formulation for exposition): 
\begin{align*}
0&\le \deg (2r\cdot c_2(q^*\mathcal E)\cap [V]-(r-1)\cdot c_1(q^*\mathcal E)^2\cap [V])\\
&= \deg (2r\cdot q^*c_2(\mathcal E)\cap [V]-(r-1)\cdot q^*c_1(\mathcal E)^2\cap [V])\\
&= \deg(2r\cdot c_2(\mathcal E)\cap q_*[V]-(r-1)\cdot c_1(\mathcal E)^2\cap q_*[V])\\
& = \deg(\deg (q) (2r\cdot c_2(\mathcal E)\cap [\mathcal X]-(r-1)\cdot c_1(\mathcal E)^2\cap [\mathcal X]))
\end{align*}
where $\deg:\operatorname{CH}_0(-)\to \mathbb Z$ is push-forward to a point. 
\end{proof}

Next we prove a generalization of \Cref{T:BGmain} to higher dimensions by restricting the types of moving classes we allow. This generalizes the standard result, e.g.,   \cite[Thm.~7.3.1]{huybrechts_lehn_2010}, in the case of smooth projective varieties, and provides an alternate proof of  the  special case of   \cite[Thm.~A.2]{JK24BGS} over $\mathbb C$: 

\begin{teo}[Jiang--Kundu]\label{T:BGn}
Let $\mathcal X$ be a smooth proper integral Deligne--Mumford stack of dimension $n$ over $\mathbb C$  with projective coarse moduli space $\pi:\mathcal X\to X$, and let $H$ be an ample line bundle on $X$.  The class $\alpha :=c_1(\pi^*H)^{n-1}$ is a movable   class in $\operatorname{N}_1(\mathcal X)_{\mathbb R}$, and if  $\mathcal E$ is an $\alpha$-semi-stable vector bundle of rank $r$ on $\mathcal X$,  then
$$
(2r\cdot c_2(\mathcal E)-(r-1)\cdot c_1(\mathcal E)^2)\cdot c_1(\pi^*H)^{n-2}\ge 0.
$$

\end{teo}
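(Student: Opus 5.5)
Following the pattern of the proof of \Cref{T:BGmain}, I will reduce to a smooth projective variety through a finite flat cover. The statement on that side is the classical Bogomolov--Gieseker inequality for an ample polarization, \cite[Thm.~7.3.1]{huybrechts_lehn_2010}.

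The first step is to check that $\alpha$ is movable. We have $\alpha=c_1(\pi^*H)^{n-1}\cap[\mathcal X]$, and $\pi_*\alpha=\deg(\pi)\,c_1(H)^{n-1}\cap[X]$; here $\deg\pi=1$ in the sense of \cite{vistoli89}, with the usual $\mathbb Q$-convention. For an effective divisor $\mathcal D$, the projection formula compatible with \eqref{E:BilIntPairXX} gives $\mathcal D\cdot\alpha=\pi_*\mathcal D\cdot H^{n-1}$. This quantity is $\geq 0$, since $\pi_*\mathcal D$ is effective and $H$ is ample. Hence $\alpha$ is movable.

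Next, choose a finite flat morphism $q:V\to\mathcal X$ from a smooth projective variety $V$, as in \S\ref{S:DM-intro}. Put $\psi=\pi\circ q:V\to X$. This map is finite, so $H_V:=\psi^*H$ is ample on $V$. Since $q$ is flat, \Cref{R:Fpb=Npb} identifies the numerical pull-back $q^*\alpha$ with the flat pull-back, namely $c_1(H_V)^{n-1}\cap[V]$. By \Cref{P:StabOnV}, $q^*\mathcal E$ is $q^*\alpha$-semi-stable. This means it is $\mu$-semi-stable with respect to the ample class $H_V$ in the classical sense.

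The classical theorem then gives
$$\bigl(2r\,c_2(q^*\mathcal E)-(r-1)c_1(q^*\mathcal E)^2\bigr)\cdot c_1(H_V)^{n-2}\ge 0.$$
To push this down to $\mathcal X$, I write every class on $V$ as $q^*$ of a class on $\mathcal X$. Chern classes pull back to Chern classes of the pulled-back bundle, and $c_1(H_V)=q^*c_1(\pi^*H)$. By the projection formula with $q_*[V]=\deg(q)[\mathcal X]$, the left side equals $\deg(q)$ times the desired quantity on $\mathcal X$, exactly as in the chain of equalities in the proof of \Cref{T:BGmain}. Dividing by $\deg q>0$ gives the inequality.

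I expect the main obstacle to be foundational rather than conceptual. The paper only sketched $c_1$ on stacks, so I must make sure that $c_2$ and the projection formula for higher Chern classes under the flat proper map $q$ are available in Vistoli's intersection theory. I would handle this as the paper implicitly does in \Cref{T:BGmain}: use Chern classes of vector bundles on $\mathcal X$ defined via operational classes, pulled back along the finite flat $q$, and apply \cite[Prop.~2.5]{fulton}-type compatibilities in the stacky setting. A secondary check is that the slope notion on $V$ from \Cref{P:StabOnV} agrees with the classical $H_V$-slope. This holds because both are defined as $c_1(-)\cdot H_V^{n-1}$.
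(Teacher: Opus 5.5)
Your proposal is correct, and for the inequality it follows the paper's proof exactly: a finite flat cover $q:V\to\mathcal X$, the identification $q^*\alpha=c_1(q^*\pi^*H)^{n-1}$ via \Cref{R:Fpb=Npb}, semi-stability of $q^*\mathcal E$ from \Cref{P:StabOnV}, then \cite[Thm.~7.3.1]{huybrechts_lehn_2010} on $V$, and finally the projection-formula push-down from \Cref{T:BGmain}. The only variation is the movability check: you verify $\pi_*\mathcal D\cdot H^{n-1}\ge 0$ directly on the coarse space, whereas the paper pulls back to $V$, where $q^*\alpha$ is a power of an ample class, and uses the converse in \Cref{L:mov-pullXX}\ref{L:mov-pullXX2}; both work, though your ``$\deg\pi=1$'' should read ``$\deg\pi>0$'' (it is $1/e$ when the generic stabilizer has order $e$), which does not affect the sign.
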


\begin{proof} Let $q:V\to \mathcal X$ be a finite flat morphisms from a smooth projective variety $V$.  Note that for any $i$ we  have $q^*(c_1(\pi^*H)^{i})= (q^*c_1(\pi^*H))^{i} = (c_1(q^*\pi^*H))^{i}$. 
First we confirm that $\alpha$ is a moving class.  For this, it suffices from \Cref{L:mov-pullXX}\ref{L:mov-pullXX2} to show that $q^*\alpha$ is movable (where $q^*$ is the numerical pull back).  From \Cref{R:Fpb=Npb}, we have that the numerical pull back agrees with the flat pull back. In particular, from the short computation in Chern classes we just did, we have that $q^*\alpha = (c_1(q^*\pi^*H))^{n-1}$.  Since $\pi\circ q$ is finite, we have that $q^*\pi^*H$ is ample on $V$, and so $q^*\alpha$ is a movable class. 

 Next, to prove the inequality in Chern classes, we observe that from \Cref{P:StabOnV}, we have that $q^*\mathcal E$ is $q^*\alpha$-semi-stable on $V$. Therefore, from \cite[Thm.~7.3.1]{huybrechts_lehn_2010}, we have that 
 $$
(2r\cdot c_2(q^*\mathcal E)-(r-1)\cdot c_1(q^*\mathcal E)^2)\cdot c_1(q^*\pi^*H)^{n-2}\ge 0.
$$
An argument similar to that in the proof of \Cref{T:BGmain} gives the desired inequality on $\mathcal X$. 
\end{proof}

\subsection{Miyaoka--Yau type inequalities}\label{S:MY}

 Denoting $c_i(\mathcal X):=c_i(\mathcal T_{\mathcal X})$, then 
if $\mathcal T_{\mathcal X}$ is $\alpha$-semi-stable,  we have from \Cref{T:BGmain} that 
\begin{equation}\label{E:WeakMY}
4c_2(\mathcal X)-c_1(\mathcal X)^2\ge 0.
\end{equation}
  Recall that for a smooth projective surface $V$ with $K_V$ big and nef, one has that $T_V$ is $\alpha$-semi-stable for $\alpha=c_1(K_V)$ (see the references around \cite[Thm.~7.1]{GKPT19} for this and related results).  However, in general, it is not clear to us when $\mathcal T_{\mathcal X}$ would be $\alpha$-semi-stable.  Moreover, the inequality \eqref{E:WeakMY} may not be optimal. 
  
   For instance, if $\mathcal X=[V/G]$ for a smooth projective surface $V$ with $K_V$ big,   and $q:V\to \mathcal X=[V/G]$ is the quotient map, then since $q^*\mathcal T_{\mathcal X}=T_V$, one can easily, via pull-back, use the Miyaoka--Yau inequality $3c_2(V)-c_1(V)^2\ge 0$ \cite[Thm.~4]{miyaoka77} to obtain the stronger inequality that $$3c_2(\mathcal X)-c_1(\mathcal X)^2\ge 0.$$

In fact, in any dimension,  if $\mathcal X=[V/G]$ for a smooth projective variety $V$ of dimension $n$ with $K_V$ big and nef   and $q:V\to \mathcal X=[V/G]$ is the quotient map, then again, since $q^*\mathcal T_{\mathcal X}=T_V$, one can 
easily, via pull-back, use the Miyaoka--Yau inequality in higher dimensions (see e.g., \cite[Thm.~1.1]{GKPT19}, for references),     $(2(n-1)\cdot c_2(V)-n\cdot c_1(V)^2)\cdot c_1(K_V)^{n-2}\ge 0$, 
  to obtain  
\begin{equation}\label{E:MY}
(2(n-1)\cdot c_2(\mathcal X)-n\cdot c_1(\mathcal X)^2)\cdot c_1(K_{\mathcal X})^{n-2}\ge 0.
\end{equation}
Regarding coarse moduli spaces, we note that in general, if $\mathcal X$ is a smooth proper DM stack over $\mathbb C$ with projective coarse moduli space $X$, and $K_X$ is big and nef, then \eqref{E:MY} holds with $\mathcal X$ replaced with $X$ (this follows from \cite[Thm.~1.1]{GKPT19}; as $X$ has finite quotient singularities, this also follows from some earlier work referenced there, as well).

%%%%%%%%%%%%%%%%%%%%%%%%%%%%%%%%%%%%%%%%%%%%%%%%%%%%%%%%%%%%%%%%%%%%%%%%%%%%%%%%%%%%%% APPENDIX -- ARXIV VERSION ONLY %%%%%%%%%%%%%%%%%%%%%%%%%%%%%%%%%%%%%%%%%%%%%%%%%%%%%%%%%%%%%

%%%%%%%%%%%%%%%%%%%%%%%%%%%%%%%%%%%%%%%%%%%%%%%%%%%%%%%%%%%%%%%%%%%%%%%%%%%%%%%%%%%%%% START AXRIV IF %%%%%%%%%%%%%%%%%%%%%%%%%%%%%%%%%%%%%%%%%%%%%%%%%%%%%%%%%%%%%
\ifArxiv

\appendix

\section{Resolution of singularities}\label[appendix]{S:ResSing}

In this appendix we provide a proof of \Cref{T:Hironaka}.

\begin{proof}[Proof of \Cref{T:Hironaka}]
 The standard proof  we give to establish a proper morphism $\mu:\mathcal X'\to \mathcal X$ satisfying  \ref{E:Hir-1} and \ref{E:Hir-2} holds verbatim for Artin stacks.   Consider a smooth  groupoid $s,t:R \rightrightarrows U$ with $ \mathcal X=[R\rightrightarrows U]$ as the stack quotient.  Let $D\subseteq U$ be the pull back of the divisor $\mathcal D$.   The functorial resolution of singularities (e.g., \cite{BM08Res, EV98Res, temkin12, temkin18}) provide a log resolution of singularities of $(U,D)$ (resp.~$(R,s^*D=t^*D)$)  by a finite sequence of blow-ups $U_{i+1}\to U_i$ (resp.~$R_{i+1}\to R_i$) with smooth centers, with $U_0=U$ (resp.~$R_0=R$).  Moreover, the functoriality of the sequence of blow-ups with respect to smooth morphisms gives more.
 To begin, the center of the blow-up for $R_1\to R_0$ is equal to the pull-back of the center for the blow-up for $U_1\to U_0$ along $s$, as well as along $t$ (as this is true for the pull-back along any smooth morphism $R_0\to U_0$).  Now, since blow-ups are also functorial for flat morphisms (e.g., \cite[\href{https://stacks.math.columbia.edu/tag/0805}{Lem.~0805}]{stacks-project}), we have a cartesian diagram
 $$
\xymatrix{
R_{1} \ar[r] \ar@<-.5ex>@{->}[d]_{t_{1}}\ar@<.5ex>@{->}[d]^{s_{1}}& R_0 \ar@<-.5ex>@{->}[d]_{t_{0}}\ar@<.5ex>@{->}[d]^{s_{0}}\\
U_{1}\ar[r]& U_0
}
$$
where $t_1$ and $s_1$ are defined by base change, and are therefore smooth.   The morphism $U_1\to U_0$ being a blow-up, is projective, and similarly for $R_1\to R_0$.  
Continuing on inductively, we have cartesian diagrams of blow-ups of smooth groupoids 
 $$
\xymatrix{
 \cdots \ar[r] &R_{i+1} \ar[r] \ar@<-.5ex>@{->}[d]_{t_{i+1}}\ar@<.5ex>@{->}[d]^{s_{i+1}}& R_i \ar@<-.5ex>@{->}[d]_{t_{i}}\ar@<.5ex>@{->}[d]^{s_{i}} \ar[r] & \cdots \\
\cdots \ar[r]& U_{i+1}\ar[r]& U_i \ar[r]& \cdots.
}
$$
If this process stops after $n$ steps, then the morphism $\mathcal X':=[R_n \rightrightarrows U_n]\to \mathcal X=[R \rightrightarrows U]$ satisfies conditions \ref{E:Hir-1} and \ref{E:Hir-2} of the theorem. 

Next we show that the morphism $\mu$ is schematic and projective.  The first step is to show that $\mu$ is representable (by algebraic spaces).  For this we consider the fibered product diagram
$$
\xymatrix{
U_n \ar[r] \ar[d]& U_0 \ar[d]\\
\mathcal X'\ar[r]^\mu& \mathcal X
}
$$
Since $U_0\to \mathcal X$ is an \'etale cover, and $U_n$ is a scheme, we can conclude that $\mu$ is representable \cite[\href{https://stacks.math.columbia.edu/tag/04XB}{\S 04XB}]{stacks-project}.  We now show that $\mu$ is schematic and projective. For this, consider a scheme $S$, a morphism $S\to \mathcal X$,  and the fibered product diagram 
$$
\xymatrix{
S' \ar[r] \ar[d]& S\ar[d]\\
\mathcal X'\ar[r]^\mu& \mathcal X
}
$$
The claim is that the algebraic space $S'$ is a scheme, and that the morphism $S'\to S$ is projective.  It suffices to prove the latter claim, which can be checked inductively on each blow-up, using the blow-up construction on the stacks to construct a relatively ample line bundle for $S'/S$, as follows.  Denote by $\mu_{i+1}:\mathcal X_{i+1}\to \mathcal X_i$ the blow-up at the $(i+1)$-th step, and let $S_{i+1}\to S_i$ be the morphism obtained by base change, with $S=S_0$.  Let $\mathcal E_{i+1}$ be the exceptional divisor of the blow-up $\mu_{i+1}$, and let $E_{i+1}'$ be the pull-back of $\mathcal E_{i+1}$ to $S_{i+1}$.  The claim is that $-E_{i+1}$ is relatively ample over $S_{i}$, so that we are done inductively.  From the standard lemma below, it suffices to check that $-E_{i+1}$ is ample on every fiber (e.g., \Cref{P:amp-fam}).  

  Since we are working with fibers over points $x:\operatorname{Spec}\mathbb C\to S_i$, these points lift to $U_i$, to give a commutative diagram
 $$
 \xymatrix{
 U_i \ar[d]& \operatorname{Spec}\mathbb C \ar[l]_x \ar[d]^x\\
 \mathcal X& S_i \ar[l].
 }
 $$
 Therefore, we have a commutative diagram
$$
\xymatrix@R=1em{
U_{i+1,x} \ar[d]\ar[r]& \operatorname{Spec}\mathbb C \ar[d]^x\\
U_{i+1} \ar[d]\ar[r]& U_{i}\ar[d]\\
\mathcal X_{i+1}\ar[r]& \mathcal X_i
}
$$
where the fiber $U_{i+1,x}\to \operatorname{Spec}\mathbb C$ of $U_{i+1}\to U_i$ is equal to the fiber $S_{i+1,x}\to \operatorname{Spec}\mathbb C$ of $S_{i+1}\to S_i$.  Moreover, the restriction $E'_{i+1}|_{S_{i+1,x}}$ to the fiber is isomorphic to the pull-back of $\mathcal E_{i+1}$ to $U_{i+1}$ restricted to the fiber $U_{i+1,x}$.  
Since the morphism $U_{i+1}\to U_i$ is a blow-up of schemes, and so projective, the fibers are projective, and moreover, the pull-back of $\mathcal O_{\mathcal X_{i+1}}(\mathcal E_{i+1})^{-1}$ to $U_{i+1}$ is a positive twist of the relative $\mathcal O(1)$ of the blow-up, and so is ample on the fibers. 

The last thing left to check is that  if $\mathcal X$ has a quasi-projective coarse moduli space, then so does $\mathcal X'$.   
This immediately follows from \cite[Lem.~2.2]{KTbir23} and the fact that $\mu:\mathcal X'\to \mathcal X$ is projective;   we include a different proof here for convenience. 
Since projectivity does not satisfy descent, we cannot simply conclude using the fact that the morphisms $U_{i+1}\to U_i$ are projective.
Nevertheless, the quasi-projectivity of the coarse moduli space can be checked inductively on each blow-up, using the blow-up construction on the stacks to construct a relatively ample line bundle on the coarse moduli spaces.  The argument is essentially identical to what is above, but we include it for completeness.  Let $\pi_i:\mathcal X_i\to X_i$ be the  coarse moduli spaces, where we are assuming that $X_0$ is quasi-projective.  Denote by $\mu_{i+1}:\mathcal X_{i+1}\to \mathcal X_i$ the blow-up at the $(i+1)$-th step, and by $m_{i+1}:X_{i+1}\to X_i$ the induced map of coarse moduli spaces.  Let $\mathcal E_{i+1}$ be the exceptional divisor of the blow-up $\mu_{i+1}$, and assume that $\mathcal O_{\mathcal X_{i+1}}(\mathcal E_{i+1})^{\otimes n_{i+1}}$ descends to a line bundle $E_{i+1}$ on $X_{i+1}$.  The claim is that $E_{i+1}^{-1}$ is relatively ample over $X_{i}$, so that inductively, using that $X_i$ is quasi-projective, we get that $X_{i+1}$ is also quasi-projective.   From the standard lemma below, it suffices to check that $E_{i+1}^{-1}$ is ample on every fiber (e.g., \Cref{P:amp-fam}).  

  Since we are working with fibers over points $\operatorname{Spec}\mathbb C\to X_i$, these points lift, and we have a diagram
$$
\xymatrix@R=1em{
U_{i+1,x} \ar[d]\ar[r]& \operatorname{Spec}\mathbb C \ar[d]^x\\
U_{i+1} \ar[d]\ar[r]& U_{i}\ar[d]\\
\mathcal X_{i+1}\ar[d]\ar[r]& \mathcal X_i\ar[d]\\
X_{i+1}\ar[r]& X_i\\
}
$$
Since the morphism $U_{i+1}\to U_i$ is a blow-up of schemes, and so projective, the fibers are projective, and moreover, the pull-back of $\mathcal O_{\mathcal X_{i+1}}(\mathcal E_{i+1})^{\otimes -n_{i+1}}$ to $U_{i+1}$ is a positive twist of the relative $\mathcal O(1)$ of the blow-up, and so is ample on the fibers.  Consequently, we have a quasi-finite morphism 

$$
U_{i+1,x}\to X_{i+1,x}
$$
from a projective scheme to a proper algebraic space, with the property that the pull-back of the line bundle to $E_{i+1}^{-1}|_{X_{i+1,x}}$ on the fiber $X_{i+1,x}$ to the fiber $U_{i+1,x}$ is ample.  From \Cref{L:amp-fin} (use that quasi-finite plus proper implies finite for algebraic spaces via descent for finite morphisms), we have that $E_{i+1}^{-1}|_{X_{i+1,x}}$ is ample, completing the proof. 
\end{proof}

\subsection{Ample line bundles on algebraic spaces}
For lack of a suitable reference, we include a few results here regarding ample line bundles on algebraic spaces that will will want to use. To start,   
we will say that a line bundle $L$ on an algebraic space $X$ is ample if $X$ is a scheme and $L$ is ample in the usual sense \cite[\href{https://stacks.math.columbia.edu/tag/01PR}{\S 01PR}]{stacks-project}.  For a proper algebraic space $X$ over the spectrum $S=\operatorname{Spec}A$ of a noetherian ring $A$, the following equivalent for a line bundle $L$ on $X$:
\begin{enumerate}[label=(\alph*)]
\item $L$ is ample.

\item Given any coherent sheaf $F$ on $X$, there exists a positive integer $m_1=m_1(F)$ having the property that 
$$
H^i(X,F\otimes L^{\otimes m})=0 \ \ \ \text { for all } i>0, \ m\ge m_1(F).
$$

\item For any coherent sheaf $F$ on $X$, there exists a positive integer $m_2=m_2(F)$ such that $F\otimes L^{\otimes m}$ is generated by its global sections for all $m\ge m_2(F)$.

\item There is a positive integer $m_3>0$ such that $L^{\otimes m}$ is very ample over $S$ for every $m\ge m_3$.
\end{enumerate}

The standard proofs in \cite[Thm.~1.2.6]{Laz04I} or \cite{hartshorne} can easily be adapted to the case of algebraic spaces.  Using this, one obtains from the standard arguments (e.g., \cite[1.2.23]{Laz04I}), as explained in \cite[Thm.~3.11]{kollar_complete_moduli}, the Nakai--Moishezon--Kleiman criterion for ampleness, namely:
 \emph{Given 
be a proper algebraic space $X$ over $\mathbb C$ and a line bundle $L$ on $X$,   then $L$ is ample if and only if for every irreducible closed subspace $V\subseteq X$ one has $\int_Vc_1(L)^{\dim V}>0$.}

From these results, using the standard arguments (e.g., \cite[Prop.~1.2.13 and  Cor.~1.2.28]{Laz04I}), one obtains:

\begin{lem}[Finite pull-back]\label{L:amp-fin}
Let $f:Y\to X$ be a dominant finite morphism of proper algebraic spaces over $\mathbb C$, and let $L$ be a line bundle on $X$.  Then $L$ is ample if and only if $f^*L$ is ample. 
\end{lem}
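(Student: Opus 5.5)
The statement to prove is \Cref{L:amp-fin}: for a dominant finite morphism $f:Y\to X$ of proper algebraic spaces over $\mathbb C$ and a line bundle $L$ on $X$, $L$ is ample if and only if $f^*L$ is ample. The plan is to reduce both directions to the Nakai--Moishezon--Kleiman criterion for proper algebraic spaces recorded just above. We work one irreducible closed subspace at a time and compare top self-intersections via the projection formula for finite morphisms.

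For the direction ``$L$ ample $\Rightarrow f^*L$ ample'', let $W\subseteq Y$ be an irreducible closed subspace. Since $f$ is finite, $f(W)\subseteq X$ is an irreducible closed subspace of the same dimension. The restriction $f|_W:W\to f(W)$ is finite and surjective of some degree $e\geq 1$. The projection formula (Vistoli/Fulton style intersection theory, which applies to algebraic spaces since they are DM stacks with trivial stabilizers) gives
\[
\int_W c_1(f^*L)^{\dim W}=e\int_{f(W)}c_1(L)^{\dim W}.
\]
The right-hand side is positive by Nakai--Moishezon applied to $L$, so $f^*L$ is ample. An alternative route, if one prefers to avoid intersection theory, is cohomological: for a coherent $F$ on $Y$ we have $H^i(Y,F\otimes f^*L^m)=H^i(X,f_*F\otimes L^m)$ because $f$ is finite, hence affine. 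This vanishes for $m\gg0$, and criterion (b) then gives ampleness.

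For the converse, suppose $f^*L$ is ample and let $V\subseteq X$ be an irreducible closed subspace. Because $f$ is finite and dominant between proper spaces it is surjective, and we may choose an irreducible component $W$ of $f^{-1}(V)$ that dominates $V$. Then $W\to V$ is finite surjective of degree $e\geq1$, so $\dim W=\dim V$. The same projection formula now reads
\[
e\int_V c_1(L)^{\dim V}=\int_W c_1(f^*L)^{\dim W}>0,
\]
so Nakai--Moishezon applies and $L$ is ample. In particular $X$ is a scheme, as required by the convention that ample line bundles live on schemes.

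The main obstacle is justifying the projection formula $\deg\bigl(c_1(f^*L)^k\cap[W]\bigr)=e\deg\bigl(c_1(L)^k\cap[f(W)]\bigr)$ on algebraic spaces. I would handle it by noting that $f_*[W]=e[f(W)]$ and that the projection formula $f_*(c_1(f^*L)\cap\beta)=c_1(L)\cap f_*\beta$ holds in the intersection theory of \cite{vistoli89}, as recalled in \S\ref{S:ChernClXX}. A second, minor point is that surjectivity of $f$ follows from properness plus dominance, since the image is closed and dense.
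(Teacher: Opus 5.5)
Your proposal is correct and is essentially the paper's argument: the paper adapts Lazarsfeld's Prop.~1.2.13 and Cor.~1.2.28 to proper algebraic spaces. Prop.~1.2.13 is the cohomological argument via $H^i(Y,F\otimes f^*L^{m})=H^i(X,f_*F\otimes L^{m})$, which you give as your alternative for the forward direction, and Cor.~1.2.28 is Nakai--Moishezon plus the projection formula, which is your converse. Your choice of an irreducible component of $f^{-1}(V)$ dominating $V$ is exactly the paper's remark that no hyperplane sections are needed, since $f^{-1}(V)\to V$ is already finite and surjective.
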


\begin{proof}
This follows directly from what is above, using the standard arguments in say  \cite[Prop.~1.2.13 and  Cor.~1.2.28]{Laz04I}. 
Note that in the proof of \cite[Cor.~1.2.28]{Laz04I}, there is no need to take hyperplane sections of $f^{-1}(V)$, since $f$ is assumed to be finite, which is stable by base change, so that $f^{-1}(V)\to V$ is already finite and surjective.  
\end{proof}

One also has the following standard result about relatively ample line bundles.  For a morphism $f:X'\to X$ from an algebraic space to a scheme, we use the standard definition \cite[\href{https://stacks.math.columbia.edu/tag/01VG}{\S 01VG}]{stacks-project} for a line bundle $L'$ on $X'$ to be $f$-relatively ample (or simply $f$-ample), which in particular, tacitly now includes the assumption that for every open affine $U\subseteq X$, the pre-image $X'_U:=f^{-1}(U)$ is a scheme.

\begin{pro}[Ampleness in families]\label{P:amp-fam}
Let $f:X'\to X$ be a proper morphism from an algebraic space to a scheme  of finite type over $\mathbb C$, and let $L'$ be a line bundle on $X'$.  
If $L'|_{X'_x}$ is ample over a fiber $X_x'$, then there is an open neighborhood $U\subseteq X$ of $x$ such that for every $y\in U$, the line bundle  $L'|_{y}$ is ample over $X'_y$. Moreover, $L'$ is $f$-relatively ample if and only if $L'|_{X'_x}$ is ample for every fiber $X'_x$.
\end{pro}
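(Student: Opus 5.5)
I would reduce the statement to the case of an affine base and a projective morphism, and then invoke the classical scheme-theoretic results: the openness of ampleness in proper flat-or-not families (EGA III, 4.7.1), and the fiberwise criterion for relative ampleness (EGA IV 9.6.4 / Lazarsfeld \cite[Thm.~1.2.17]{Laz04I}). Since both assertions are local on $X$, I first replace $X$ by an affine open neighborhood $\operatorname{Spec}A$ of $x$. The real content is to show that near $x$ the algebraic space $X'$ is actually a scheme and $L'$ is relatively ample, so that the scheme-theoretic statements apply.

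\textbf{Step 1 (cohomological criterion near $x$).} For a coherent sheaf $F$ on $X'$, I would use the theorem on formal functions / semicontinuity and cohomology and base change for proper morphisms of algebraic spaces (Stacks Project, which holds for algebraic spaces). Since $L'_x:=L'|_{X'_x}$ is ample on the proper scheme $X'_x$, there is $m_0$ with $H^i(X'_x,\mathcal I\otimes L_x'^{m})=0$ for $i>0$, $m\ge m_0$, for the finitely many sheaves needed, and $L_x'^{m}$ very ample with vanishing higher cohomology. The standard argument (as in \cite[Thm.~1.2.17]{Laz04I}) then shows: after shrinking $X$ around $x$, $R^if_*L'^{m}=0$ for $i>0$, $f_*L'^{m}$ is locally free and commutes with base change, and the natural map $f^*f_*L'^{m}\to L'^{m}$ is surjective near $X'_x$ (surjectivity on the fiber via Nakayama; the locus where it fails is closed in $X'$, its image is closed in $X$ by properness, and misses $x$). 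This gives a morphism $\phi:X'_U\to \mathbb P_U(f_*L'^{m})$ over $U$.

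\textbf{Step 2 ($\phi$ is a closed immersion near $x$; the main obstacle).} On the fiber over $x$, $\phi_x$ is a closed immersion since $L'^m_x$ is very ample and sections lift by base change. I need that $\phi$ is a closed immersion over a neighborhood of $x$. First, $\phi$ is proper (source proper over $U$, target separated). The locus of points of the target where the fiber of $\phi$ is not a single reduced point / where $\phi$ is not unramified is closed, and its image in $U$ is closed and misses $x$; so after shrinking, $\phi$ is proper, unramified and universally injective, i.e.\ a proper monomorphism, hence a closed immersion (Stacks Project, which is stated for algebraic spaces). This is the delicate step, since one must use the versions for algebraic spaces of "proper monomorphism $\Rightarrow$ closed immersion" and openness of unramifiedness. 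Consequently $X'_U$ is a scheme, projective over $U$, and $L'^m|_{X'_U}=\phi^*\mathcal O(1)$ is $f$-ample over $U$; in particular $L'|_{X'_y}$ is ample for all $y\in U$, proving the first assertion.

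\textbf{Step 3 (global statement).} If $L'$ is $f$-ample, each fiber restriction is ample since ampleness is preserved by base change (\cite[\href{https://stacks.math.columbia.edu/tag/01VG}{\S 01VG}]{stacks-project}). Conversely, if every fiber restriction is ample, Step 2 gives an open cover of $X$ by $U$ with $f^{-1}(U)$ a scheme and $L'|_{f^{-1}(U)}$ relatively ample over $U$. Since $f$-ampleness is local on the base, and for affine $W\subseteq X$ the preimage $f^{-1}(W)$ is then a scheme (it is covered by schemes $f^{-1}(W\cap U)$, glued as an algebraic space, hence a scheme) on which $L'$ is ample by the local criterion, $L'$ is $f$-ample.
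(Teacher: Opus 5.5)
Your argument is correct in outline, and its first half is the paper's. Both reduce to an affine neighbourhood of $x$ and run the argument of \cite[Thm.~1.2.17]{Laz04I}. After replacing $L'$ by a power, this gives a morphism $\phi:X'_U\to P$ to a projective space $P$ over $U$, with $L'^{m}|_{X'_U}=\phi^*\mathcal O_P(1)$ and $\phi_x$ a closed immersion.

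The endgames differ. The paper only shrinks $U$ until $\phi$ is finite. It then gets ampleness from the cohomological criterion on the proper algebraic space $X'_U$, via $H^i(X'_U,F\otimes L'^{m})=H^i(\mathbb P^r_U,\phi_*F(m))=0$ for $m\gg 0$. Under the paper's convention that ample bundles live on schemes, this criterion is also what certifies that $X'_U$ is a scheme.

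You instead upgrade $\phi$ to a closed immersion (proper, unramified, universally injective). This needs a few more algebraic-space facts. In return it gives schemeness and projectivity of $X'_U$ over $U$ directly, so you never need the algebraic-space version of Serre's criterion, which the paper only asserts can be adapted. Your Step 3 also makes explicit the gluing (an algebraic space covered by open subschemes is a scheme) hidden in the paper's ``the rest follows immediately''.

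The two endings are closer than they look. Once $\phi$ is finite, apply Nakayama to $\operatorname{coker}(\mathcal O_P\to \phi_*\mathcal O_{X'_U})$: its support is closed and misses the fibre over $x$, which already gives the closed immersion. So unramifiedness is not actually needed.

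Two steps need repair as written.

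\textbf{Flatness in Step 1.} The map $f$ is not assumed flat, so cohomology and base change does not show that $f_*L'^{m}$ is locally free or commutes with base change. Local freeness can genuinely fail, e.g.\ for $X'=\mathbb A^1\sqcup\{0\}\to \mathbb A^1$ with $L'=\mathcal O_{X'}$. All you actually use is surjectivity of $H^0(X'_U,L'^{m})\to H^0(X'_x,L'^{m}_x)$ after shrinking. This follows from the theorem on formal functions applied to $\mathfrak m_x\mathcal O_{X'}\otimes L'^{m}$, provided Serre vanishing on $X'_x$ is taken uniformly over the graded pieces $\mathfrak m_x^k\mathcal O_{X'}/\mathfrak m_x^{k+1}\mathcal O_{X'}$. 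These pieces assemble into one coherent sheaf on $X'_x\times\mathbb A^N$, which gives the uniformity. Then work with $\mathbb P_U(f_*L'^{m})$ for the coherent sheaf $f_*L'^{m}$, or with finitely many lifted sections and $\mathbb P^r_U$.

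\textbf{The injectivity locus in Step 2.} The condition ``the fibre is a single reduced point'' is not open: over a non-closed point $p$ a fibre can be $\operatorname{Spec}K$ with $K\neq k(p)$, as for the generic fibre of $z\mapsto z^2$. Use instead $\dim_{k(p)}(\phi_*\mathcal O_{X'}\otimes k(p))\le 1$. This is open by upper semicontinuity once $\phi$ is finite, and it holds along the fibre over $x$ because $\phi_x$ is a closed immersion.
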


\begin{proof}
With the discussion above, the proof in say \cite[Thm.~1.2.17]{Laz04I}
 caries over directly to algebraic spaces.  Up to taking a tensor power of $L'$, the argument there shows that if $L'|_{X'_x}$ is ample over a fiber $X_x$', then  after restriction to a sufficiently small open affine neighborhood $U\subseteq X$ of $x$,  one has a finite morphism $\phi:X'_U\to \mathbb P^r_U$, with $L'|_{X'_{U}}=\phi^*\mathcal O_{\mathbb P^r_U}(1)$.  We can now then use  the cohomological criterion for flatness, since $X'_U$ is defined over the affine $U$; for any coherent sheaf $F$ on $X'_U$, we have $H^i(X'_U,F\otimes L'|_{X'_{U}}^{\otimes m})= H^i(\mathbb P^r_U,\phi_*F\otimes \mathcal O_{\mathbb P^r_U}(m))=0$ for $m\gg 0$.  Therefore $L'|_{X'_{U}}$ is ample on $X'_U$.  The rest of the proposition follows immediately. 
 \end{proof}

\fi 
%%%%%%%%%%%%%%%%%%%%%%%%%%%%%%%%%%%%%%%%%%%%%%%%%%%%%%%%%%%%%%%%%%%%%%%%%%%%%%%%%%%%%% END AXRIV IF %%%%%%%%%%%%%%%%%%%%%%%%%%%%%%%%%%%%%%%%%%%%%%%%%%%%%%%%%%%%%

 \bibliographystyle{amsalpha}
 \bibliography{mhm_bib}

\end{document}